\documentclass[11pt]{amsart}

\usepackage[dvipsnames]{xcolor}
\usepackage{amssymb,verbatim}
\usepackage{amsmath,amsfonts,enumitem}
\usepackage[mathscr]{euscript} 
\usepackage{amsthm,bm}
\usepackage{url}
\usepackage{graphicx} 
\usepackage{enumitem} 
\usepackage{hyperref} 
\hypersetup{colorlinks} 
\usepackage{bm} 
\usepackage{mathrsfs} 
\usepackage{cancel} 

\usepackage[colorinlistoftodos]{todonotes}

\RequirePackage{cleveref}
\usepackage{hypcap}
\hypersetup{colorlinks=true, citecolor=darkblue, linkcolor=darkblue}
\definecolor{darkblue}{rgb}{0.0,0,0.7}
\newcommand{\darkblue}{\color{darkblue}}

\definecolor{darkred}{rgb}{0.68,0,0}
\newcommand{\darkred}{\color{darkred}}

\definecolor{darkgreen}{rgb}{0,.38,0}
\newcommand{\darkgreen}{\color{darkgreen}}

\newcommand{\defn}[1]{\emph{\darkblue #1}}
\newcommand{\defna}[1]{\emph{\darkred #1}}
\newcommand{\defnb}[1]{\emph{\darkblue #1}}
\newcommand{\defng}[1]{\emph{\darkgreen #1}}

\makeatletter
\def\th@plain{%
	\thm@notefont{}
	\itshape 
}
\def\th@definition{%
	\thm@notefont{}
	\normalfont 
}
\makeatother

\newtheorem{thm}{Theorem}[section]
\newtheorem{lemma}[thm]{Lemma}
\newtheorem{claim}[thm]{Claim}
\newtheorem*{claim*}{Claim}
\newtheorem{cor}[thm]{Corollary}

\newtheorem{conj}[thm]{Conjecture}
\newtheorem{conv}[thm]{Convention}

\theoremstyle{definition}

\numberwithin{figure}{section}
\numberwithin{equation}{section}

\newcommand{\BFT}{C_{\mathrm{BFT}}}
\def\zz{\mathbb Z}

\def\de{\delta}
\def\ep{\varepsilon}
\def\eps{\epsilon}

\def\ve{\varepsilon}

\def\cE{\mathcal E}

\def\cF{\mathcal F}

\def\cL{f}

\def\cO{\mathcal O}

\def\cQ{Q}

\def\Eb{\mathbf{E}}

\def\<{\langle}
\def\>{\rangle}

\def\0{{\mathbf 0}}

\def\.{\hskip.06cm}
\def\ts{\hskip.03cm}

\def\.{\hskip.06cm}
\def\ts{\hskip.03cm}

\def\nin{\noindent}

\def\Pb{{\text{\bf P}}}

\DeclareMathOperator{\gap}{\operatorname{gap}} 

\title{Breaking the Infinite Barrier in the $\frac{1}{3}$--$\frac{2}{3}$ Conjecture}

\newcounter{reviewissue}

\begin{document}
	
	\author[\ts Max Aires]{Max Aires}
	\address[Max Aires]{Department of Mathematics, USC,  Los Angeles, CA 90089.}
	\email{\texttt{aires@usc.edu}}

	\author[Swee Hong Chan]{Swee Hong Chan}
	\address[Swee Hong Chan]{Department of Mathematics, Rutgers University,  Piscataway, NJ 08854.}
	\email{\texttt{sweehong.chan@rutgers.edu}}
	
	\author[Igor Pak]{Igor Pak}
	\address[Igor Pak]{Department of Mathematics, UCLA,  Los Angeles, CA 90095.}
	\email{\texttt{pak@math.ucla.edu}}
	
	\author[\ts Greta Panova]{Greta Panova}
	\address[Greta Panova]{Department of Mathematics, USC,  Los Angeles, CA 90089.}
	\email{\texttt{gpanova@usc.edu}}
	
	\date{\today}
	
	
	\subjclass[2020]{Primary 06A07; Secondary 05A20, 60C05}
	
	\keywords{Partially ordered sets, linear extensions,
		$1/3$--$2/3$ conjecture, balance constants,
		correlation inequalities, Kahn--Saks conjecture, log-concavity}
	
	\begin{abstract}
		The balance constant is a poset parameter measuring how well random linear extensions
		can be split according to their relative order on two elements.
		We give an $\ve$-improvement for the balance constant over the bound by
		Brightwell, Felsner and Trotter \cite{BFT95}, for some small $\ve>0$.
		This is the first general result towards the $\frac{1}{3}$--$\frac{2}{3}$ \.
		conjecture in over 30 years.
	\end{abstract}
	
	\maketitle
	
	
	\section{Introduction}\label{s:proof-crit-simp}
	
	When a major open problem remains unresolved for decades, one can certainly
	ask why that is.  In most cases, there are too many plausible answers.
	But for the \. \defnb{$\frac{1}{3}$--$\frac{2}{3}$ \. conjecture},
	there is a standard answer that has also been known for decades: while the conjecture
	is stated for finite posets, it is \emph{false} for \emph{infinite} posets,
	and there are no known tools which make a distinction.  This \defna{infinite barrier}
	has plagued the area for over 30 years, repelling numerous {attempts at
		solving} the conjecture.  In this paper we break the infinite barrier
	by a microscopic margin.
	
	\smallskip
	
	\subsection{Background}\label{ss:intro-back}
	Let \. $P:=(X,\prec)$ \. be a finite poset, where $X$ is a finite set of size $n:=|X|$, and
	\. $\prec$ \. is a { partial order} on~$X$. We write $[n]:=\{1,\ldots,n\}$.
	A \defnb{linear extension} of $P$  is a bijection $f:X \to [n]$ that is order preserving:
	\. $x \prec y$ \. in ${{P}}$ implies $f(x)<f(y)$ for all $x,y \in X$.
	Let $\cE(P)$ denote the set of linear extensions of~$P$.
	
	\begin{conj}[{\rm \defnb{$\frac{1}{3}$--$\frac{2}{3}$ \. conjecture}}{}] \label{conj:1323}
		Let \. $P:=(X,\prec)$ \. be a finite poset that is not a chain.  Then there exist
		elements \ts $x,y\in X$, such that
		\begin{equation}\label{eq:conj-1323}
			\frac{1}{3}  \ \leq \ \Pb[ f(x) \.  < \. f(y)]  \ \leq \ \frac{2}{3}\,,
		\end{equation}
		where the probability is over uniform random \ts $f\in \cE(P)$.
	\end{conj}
	
	The conjecture is a central open problem in order theory, independently proposed by
	Kislitsyn \cite{Kis68} and Fredman \cite{Fre75}, see also \cite{Lin84},
	in connection to sorting with partial information.
	%
	The simplicity and elegance of the \. $\frac{1}{3}$--$\frac{2}{3}$ \. conjecture
	{have attracted much attention}, and numerous special cases of the conjecture have
	been established in the literature.
	
	These known special cases include: width two posets \cite{Lin84},
	posets with symmetries \cite{GHP87}, semiorders \cite{Bri89}, 
	posets of height $O(\log\log n)$ \ts \cite{Fri93}, $6$-thin posets \cite{Pec08},
	$N$-free posets \cite{Zag12}, Young diagrams \cite{OS18}, 
	posets whose cover graph is a forest \cite{Zag19}, and posets with at most 14 elements~\cite{Gup26}.  We refer to \cite[$\S$11]{CP-LE}
	{for an introduction to the problem}, additional special cases and further references.
	
	In a major breakthrough, Kahn and Saks~\cite{KS84} used geometric inequalities to
	show that the inequality
	\eqref{eq:conj-1323} holds with slightly weaker constants:
	\begin{equation}\label{eq:KS}
		\frac{3}{11}  \ \leq \ \Pb[ f(x) \.  < \. f(y)]  \ \leq \ \frac{8}{11}\,.
	\end{equation}
	Note that \. $\frac13$--$\frac23$ \. is tight for {the $3$-element poset consisting of a $2$-element chain and an isolated element}, so
	the bound \. $\frac{3}{11} \approx 0.2727<\frac13$ \. leaves relatively little room for
	improvement.  Despite various attempts and simplifications, see e.g.\ \cite{KL90} and
	\cite[$\S$12.3]{Mat02}, the \emph{Kahn--Saks bound} stood unimproved for a decade.
	
	In another major breakthrough, Brightwell, Felsner and Trotter \cite{BFT95} used both
	the combinatorial \emph{Ahlswede--Daykin inequality}  \cite{AD78} and
	the geometric \emph{Alexandrov--Fenchel inequalities}~\cite{BZ-book} to improve on the Kahn--Saks bound:
	\begin{equation}\label{eq:BFT}
		\frac{5-\sqrt{5}}{10}  \ \leq \ \Pb[ f(x) \.  < \. f(y)]  \ \leq \    \frac{5+\sqrt{5}}{10}\,.
	\end{equation}
	Note that \. $\frac{5-\sqrt{5}}{10} \approx 0.2764$, so this might seem an incremental
	improvement which could perhaps be improved as new poset inequalities are added to the
	mix.  {Yet this} \defn{BFT bound} {remained the best known general bound prior to the present work}.
	
	The reason the BFT bound remained {unimproved for over 30 years was} the \defn{infinite barrier}
	that was also introduced in the original paper \cite[$\S$7]{BFT95}, where \eqref{eq:BFT}
	was extended to infinite posets.  The authors then considered an example of an
	\defng{infinite Fibonacci poset} (see below), first introduced in \cite{Bri88}, and
	for which \eqref{eq:BFT} is sharp.  Despite many recent developments in both geometric
	and combinatorial inequalities, the infinite barrier gave little hope for improvement, 
	see e.g.\ \cite{Bri99}. 
	
	\smallskip

	\subsection{Main result}\label{ss:intro-new}
	In this paper, we overcome this infinite poset barrier and obtain the first
	improvement on the bound in \eqref{eq:BFT}, as stated in the following theorem.

	\smallskip
	
	\begin{thm}[{\rm \defna{Breaking the infinite barrier}}{}]\label{thm:main}
		There exists a fixed constant $\ep>0$, such that for every finite poset
		\. $P:=(X,\prec)$ \. that is not a chain, there exist
		elements \ts $x,y\in X$, s.t.\
		\begin{equation}\label{eq:main-thm}
			\frac{5-\sqrt{5}}{10}  \. + \. \ep   \ \leq \ \Pb[ f(x) \.  < \. f(y)]  \ \leq \ \frac{5+\sqrt{5}}{10} \. - \. \ep \,,
		\end{equation}
		where the probability is over uniform random \ts $f\in \cE(P)$.
	\end{thm}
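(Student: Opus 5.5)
We follow the Brightwell--Felsner--Trotter argument and locate where it is tight. Set $\delta_0:=\frac{5-\sqrt5}{10}$. Let $P$ be a finite poset that is not a chain, and suppose for contradiction that every pair $x,y$ has $\Pb[f(x)<f(y)]\notin[\delta_0+\ep,1-\delta_0-\ep]$. Every incomparable pair is then $(\delta_0+\ep)$-unbalanced, so we can orient it: write $x\to y$ when $\Pb[f(x)<f(y)]>1-\delta_0-\ep$. The BFT argument runs as follows. One shows that the relation $\to$, together with $\prec$, is transitive enough to define a linear order $L$. One picks an incomparable pair $x,y$ that is adjacent in a suitable sense, and considers the differences $f(y)-f(x)$. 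The Alexandrov--Fenchel inequalities, through Stanley's log-concavity of $N_k=\#\{f: f(y)-f(x)=k\}$, combine with Ahlswede--Daykin/XYZ-type correlation bounds on the neighbours of $x,y$ in $L$. Together these give a recursive inequality, and the resulting chain of probabilities forces $\Pb[f(x)<f(y)]\ge\delta_0$. The first step is to redo this argument quantitatively, tracking every inequality that is used, so that the output becomes a stability statement: if all pairs are $(\delta_0+\ep)$-unbalanced, then every inequality in the chain is tight up to $O(\ep)$.

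The second step is to show that near-equality forces near-Fibonacci structure. Equality in the BFT argument requires two things. First, the sequence $(N_k)$ must be nearly geometric, with ratio close to the golden ratio $\varphi^{-1}$. Second, the probabilities $\Pb[f(z_i)<f(z_{i+1})]$ along the relevant chain $z_1,z_2,\dots$ must nearly satisfy the Fibonacci recursion, as they do in the infinite Fibonacci poset of \cite{Bri88}. Near-geometricity of $N_k$ with a fixed ratio $q<1$ and a fixed constant factor propagates. Since log-concavity applies at every $k$, it forces $N_k/N_0\approx q^k$ for all $k\le K$, where $K\to\infty$ as $\ep\to0$. Iterating the recursion then forces the existence of a sequence of elements $z_1,\dots,z_K$ that pairwise realize Fibonacci-type balance probabilities up to error $O(K\ep)$. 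In particular, a finite poset can only come close to the bound by containing arbitrarily long approximate Fibonacci configurations.

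The third and key step, where we break the infinite barrier, is to use finiteness. The chain $z_1,\dots,z_K$ cannot be continued indefinitely inside $X$. We need an argument showing that its endpoint, the last element of $L$ or the first, produces a balanced pair. At an extremal element $z$ of $P$ (say a minimal element with nothing forced below it in $L$), the BFT recursion has a boundary term that is missing in the infinite Fibonacci poset. This missing term should produce a defect of fixed size $c>0$ in one of the inequalities. Here $c$ must be independent of $n$, for instance coming from a boundary case of the log-concavity ratio such as $N_1/N_0$ at the end of the chain. This defect then propagates back along the chain and amplifies as it goes, since the Fibonacci recursion is expanding in one direction. After $O(\log(1/\ep))$ steps it contradicts the $O(\ep)$-tightness obtained in the second step. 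Choosing $\ep$ small relative to $c$ gives the contradiction.

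I expect this third step to be the main obstacle, for two reasons. First, the stability of the Alexandrov--Fenchel equality cases is not available in general; we would only use the one-dimensional log-concave sequence $N_k$, whose near-equality is elementary. Second, the defect at the finite end must be made uniform in the poset. To attack this, I would work with the combinatorial (Stanley/Kahn--Saks) interpretation of $N_k$ and prove directly that $N_0$ and $N_1$ differ from their geometric prediction at the boundary of the chain, by exhibiting an explicit injection between sets of linear extensions.
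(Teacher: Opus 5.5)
You correctly identify the phenomenon the paper exploits: in a finite poset the Fibonacci-like extremal configuration has an end, and pairs near that end are better balanced. The gap is the step meant to make this uniform. Your boundary defect $c>0$, independent of $P$, is asserted rather than proved, and the BFT-based route does not supply it.

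In the paper the boundary defect is the maximal displacement $M=\max_i|h(v_i)-i|$. It is bounded below only via
$M\ge d_1=\sum_{j\ge2}\Pb[f(v_j)<f(v_1)]\ge q_2=(D+1)^{-2(D+1)}$.
This uses Lemma~\ref{lem:feasible}: when $\pi(P)\le D$, a positive-probability event on $m$ elements has probability at least $(D+1)^{-m(D+1)}$.

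The same lemma upgrades approximate tightness of the Case~D inequalities ($R\approx 0$) to the exact rigid structure of Lemma~\ref{lem:rigidity}. There, $z$ covers $x$, $y$ is incomparable only to $x$ and $z$, and $xy$, $yz$ are bridges. Without this structure, the linear extensions do not factor through $L\cup\{x\}$ and $U\cup\{z\}$, and the heights of the triple cannot be computed.

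Without a range bound, positive-probability events can be arbitrarily rare and boundary terms arbitrarily small. For example, a chain $a\prec c_1\prec\cdots\prec c_k$ plus one element incomparable to all of it has $d_1=1/(k+2)$. So the gain $\eta_D$ degenerates as $D\to\infty$, and the paper needs a trichotomy:
\begin{itemize}
\item small range, handled by BFT plus displacement (Theorem~\ref{thm:ACPP});
\item large width, handled by an independent bound $\delta(P)>e^{-1}-10^{-100}$ built from log-concave estimates, Aires--Kahn window estimates, the XYZ inequality and the Haqi--Kahn ideal inequality (Theorem~\ref{thm:Aires});
\item bounded width with large range, handled by Aires--Kahn (Theorem~\ref{thm:AK}).
\end{itemize}
Your plan has no counterpart to the last two cases. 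So even granting your first two steps, it does not produce an $\ep$ independent of $P$.

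Your first two steps also do not match what the BFT argument provides. It does not orient unbalanced pairs into a linear order, and it does not run a recursion along a chain $z_1,z_2,\dots$. It analyses a single triple $(v_k,v_{k+1},v_{k+2})$, consecutive in the height order with $h(v_{k+2})\le h(v_k)+2$. The value $\delta_0$ comes out of a finite optimization in Case~D, which uses the special case of the cross-product inequality; Cases A--C give strictly better bounds.

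Near-equality in the log-concavity of $(N_k)$ at one index does not force $N_k/N_0\approx q^k$ for all $k\le K$. Nothing in this local analysis produces $K$ further elements of $P$ with Fibonacci-type balances. So the ``propagation and amplification'' of a boundary defect has no mechanism behind it.

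The paper avoids propagation altogether. Lemma~\ref{lem:extremal-BFT} selects a BFT triple one of whose endpoints attains $M$. Lemma~\ref{lem:balance-displacement} then handles the case $a=\max\{\delta(P;x,y),\delta(P;y,z)\}=\delta_0+\ep'$. There the rigid structure forces $r$ and $s$ to lie within $5\sqrt{\ep'}$ of $(\sqrt5-1)/2$, and both endpoint displacements to be at most $21\sqrt{\ep'}$. Hence $a\ge\delta_0+M^2/441$.
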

	
	\smallskip
	
	The constant $\ep$ can be made explicit, but is microscopically small.
	Our proof allows us to take
	\begin{equation}\label{eq:ep}
		\ep \ := \ 3^{-3^{16 \times 10^{18}}}\,,
	\end{equation}
	and we note that already \ts $3^{3^{16}}$ \ts has over $20$ million decimal digits.
	As our primary goal is to establish a separation between finite and infinite posets 
	in this context, we make no attempt to optimize this constant.  For simplicity, we 
	omit the explicit derivation of~$\ep$.
	
	The proof of Theorem~\ref{thm:main} relies on three  main ingredients, each of 
	independent interest.  Indeed, we divide the posets into three classes:
	
	\smallskip
	
	$\circ$ \. posets of small range,
	
	$\circ$ \. posets of large width, and
	
	$\circ$ \. posets of large range and small width.  
	
	\smallskip
	
	\nin
	We treat these cases completely differently:  we break the infinite
	barrier for the first case, we prove the \. $\frac{1}{3}$--$\frac{2}{3}$ \. Conjecture 
	for the second case, and use the asymptotic bound by Kahn and the first author 
	\cite{AK25b}, to obtain the result in the third case.  We now proceed to discuss  
	each case separately, give formal statements and present the background 
	(cf.~$\S$\ref{ss:finrem-Rota}).  
	
	\smallskip

	\subsection{Posets of small range}\label{ss:intro-proof}
	To state them, we first define
	the \defnb{balance constant} \ts of elements $x,y\in X$ of a finite poset $P=(X,\prec)$ as 
	\[ \delta(P;x,y) \ := \  \min \big \{ \. \Pb[f(x) < f(y)], \, \Pb[f(y) < f(x)] \. \big \}, \]
	and of the poset itself: 
	\[\delta({{P}}) \ := \ \max_{{x,y\in X,\ x\neq y}} \. \delta(P;x,y).
	\]
	Here and throughout the paper, the probability is over uniform random $f\in \cE({{P}})$. 
	
	Denote \. $\BFT:=\frac{5-\sqrt5}{10}$.
	In this notation, the $\frac{1}{3}$--$\frac{2}{3}$ Conjecture asserts 
	that $\delta({{P}})\geq 1/3$ for every finite poset ${{P}}$ that is not a chain.
	Similarly, Theorem~\ref{thm:main} says that there exists an absolute constant \. $\ep>0$ \. such that
	\. $\delta({{P}}) \geq \BFT + \ep$ \. 
	for every such~$P$.

	\smallskip	
	For $x \in X$, let $\pi(x)$ denote the number of elements of $P$ incomparable to $x$.
	Define the \defnb{range}\footnote{For
		\ts $D\geq 2$, a poset ${P}$ is called \defnb{$D$-thin} \ts if \. $\pi({P})\leq D$.
		We prefer \emph{range} over \emph{thinness} which is more standard in the literature.}
	of~$P$ as 
	$$\pi(P) \, := \, \max_{x \in X} \. \pi(x).
	$$ 
	The first ingredient in the proof of  Theorem~\ref{thm:main} is 
	an explicit bound on the balance constant for {posets of small range}.
	\smallskip
	
	\begin{thm}[{\rm \defna{The case of small range}}{}]\label{thm:ACPP}
		For an integer \ts $D \geq 2$, 
		let \. $P=(X,\prec)$ \. be a finite poset that is not a chain, such that \ts 
		$\pi(P)\leq D$. 
		Then:
		\[ \delta(P) \ > \ \frac{5-\sqrt{5}}{10} \. + \. \eta_D \quad \text{where}  \quad
		\eta_D  \ := \  \frac{(D+1)^{-4(D+1)}}{4096}  \]
	\end{thm}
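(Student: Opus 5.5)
We would follow the Brightwell--Felsner--Trotter argument and track where equality can occur. Suppose for contradiction that $\delta(P)\le \BFT+\eta_D$.

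\emph{Choice of the pair.} In a finite poset that is not a chain, pick an incomparable pair $x\parallel y$. Among such pairs, take one that minimizes the expected gap $\Eb[f(y)-f(x)]$ subject to $\Pb[f(x)<f(y)]\ge 1/2$, as in the BFT setup.

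\emph{Recovering the BFT bound.} The Ahlswede--Daykin inequality and the Stanley/Alexandrov--Fenchel log-concavity of the sequence $N_k=\#\{f: f(y)-f(x)=k\}$ give two-sided constraints. Applied to $x,y$ and auxiliary elements $z$ with $x\prec z$ or $z\parallel y$, these chain into a recursion. When every relevant balance constant is at most $\BFT+\eta$, the recursion forces the relative-position probabilities to nearly satisfy the Fibonacci relations $p_{k+1}\approx p_k-p_{k-1}$ with ratio $(1+\sqrt5)/2$. This is the mechanism that makes the infinite Fibonacci poset extremal.

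\emph{Stability along a chain of witnesses.} We then make the argument quantitative. Each near-equality in the chain should force the existence of a new element $z_k$ incomparable to $x$, or to a previously produced witness, with $\Pb$-profile close to the Fibonacci one. The deviation is amplified by a bounded factor at each step. Starting from an error $\eta$, after $m$ steps the error is at most $C^m\eta$. So as long as $C^m\eta$ is small, the structure continues to propagate and yields $m$ distinct elements.

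\emph{Using the range bound.} These elements all lie within a bounded window of $x$ in the linear extensions. Each must be incomparable to $x$ or to some bounded neighbourhood of $x$. With $\pi(P)\le D$, at most about $(D+1)^2$ of them can exist. Taking $m\approx 4(D+1)$ and $\eta=(D+1)^{-4(D+1)}/4096$ keeps $C^m\eta$ below the threshold, with $C\approx D+1$ coming from averaging over at most $D+1$ choices per step. This produces more distinct witnesses than the range permits, which is the desired contradiction. In other words, a finite $D$-thin poset cannot sustain the infinite Fibonacci structure to within $\eta_D$.

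\emph{Main obstacle.} The hard step is the stability version of BFT. Near-equality in Ahlswede--Daykin and in Alexandrov--Fenchel log-concavity must be turned into an explicit combinatorial witness, with quantitatively controlled error growth. We would not invoke general Alexandrov--Fenchel equality characterizations. Instead, we would work directly with ratios $N_{k+1}/N_k$, using elementary perturbation bounds. For the Ahlswede--Daykin step, we would use an explicit injection to exhibit the new incomparable element whenever the inequality is nearly tight.
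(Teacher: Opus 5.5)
There is a genuine gap, and it is exactly the ``infinite barrier'' this theorem has to overcome. Your contradiction is purely local. You start from one pair and propagate near-equalities to witnesses $z_1,z_2,\dots$, each incomparable to $x$ \emph{or to an earlier witness}, and then count them against $\pi(P)\le D$. Your global selection rule for the pair is never used afterwards. The problem is that $\pi(P)$ is only the maximum degree of the incomparability graph $G(P)$; it puts no bound on the length of a path in $G(P)$. So the claim that all witnesses lie in a bounded neighbourhood of $x$, and hence number at most about $(D+1)^2$, is unjustified. It is false in the model case: $G(\cF_m)$ is the path $x_{-m},\dots,x_m$ with $\pi(\cF_m)=2$, and in any fixed window around the center every pair has balance at most $\BFT+o(1)$ as $m\to\infty$. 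Every near-equality your mechanism consumes is therefore available near the center of $\cF_m$, and witnesses keep propagating along the path without ever violating the range bound. Your own error control $C^k\eta$ caps the propagation at $O(\log(1/\eta))$ steps, so the argument never reaches the boundary. The boundary is the only place where $\cF_m$ differs from the infinite $\cF$. Even on its own terms the count fails: $4(D+1)$ witnesses do not exceed $(D+1)^2$ once $D\ge3$. Also, the BFT bound does not come from a Fibonacci recursion on $N_k$. It comes from a height-consecutive triple, the case analysis A--D, the packing lemma, and a special case of the cross-product inequality. No mechanism is given for turning near-equality into an explicit witness.

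What is missing is a global statistic that detects the boundary. The paper uses the maximal displacement $M=\max_i|h(v_i)-i|$. It picks a height-consecutive BFT triple $(v_k,v_{k+1},v_{k+2})$ whose endpoint attains $M$, which is possible since $d_1\ge0$, $d_1+d_2\ge0$, $d_n\le0$ and $d_{n-1}+d_n\le0$. The range enters only through one counting lemma: an event specified by comparisons among $m$ elements, if it has positive probability, has probability at least $q_m=(D+1)^{-m(D+1)}$. This lemma is used twice.
\begin{itemize}
\item It gives $M\ge d_1\ge q_2$.
\item Combined with the quantitative Case~D bound $\max\{p,p'\}\ge\BFT+R/(5+3\sqrt5)$, where $R\ge\Pb[f(x)-f(y)\ge2]+\Pb[f(y)-f(z)\ge2]$, it shows no element can sit between $y$ and $x$ (or between $z$ and $y$). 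This forces a rigid $(2+1)$ structure: $x\lessdot z$, $y$ is incomparable only to $x$ and $z$, and $L\cup\{x\}\prec U\cup\{z\}$.
\end{itemize}
In that structure everything is explicit. We have $p=r/(1+r+s)$ and $p'=s/(1+r+s)$, and $h(z)-h(x)\le2$ implies $r+s+2rs\ge2$. Near-equality then forces $r,s\approx\frac{\sqrt5-1}{2}$, which makes both endpoint displacements $O\bigl(\sqrt{\max\{p,p'\}-\BFT}\bigr)$. Hence $\max\{p,p'\}\ge\BFT+M^2/441\ge\BFT+q_2^2/441>\BFT+\eta_D$. The exponent $4(D+1)$ in $\eta_D$ comes from $q_2^2$, not from a count of witnesses.
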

	
	\smallskip
	
	We will prove Theorem~\ref{thm:ACPP} in Section~\ref{s:thm:ACPP}.
	Note that for \ts $D\leq5$, Brightwell and Wright \cite{BW92} proved a 
	stronger bound \. $\delta({P})\geq \frac13$, establishing the \. 
	$\frac13$--$\frac23$ \. Conjecture for such posets.  
	In~\cite{Pec08}, Peczarski extended this result to \ts $D\leq6$.
	Unfortunately, their methods break down for larger~$D$.  
	
	\smallskip
	
	We emphasize that the finiteness assumption in Theorem~\ref{thm:ACPP} is essential.
	Define the \defn{infinite Fibonacci poset} \ts $\cF=(X,\prec)$ \ts on \ts $X=\{x_i:i\in\mathbb Z\}$, 
	where \ts $x_i\prec x_j$ \ts if and only if \ts $j-i\geq2$.
	Let \ts $\cF_m$, where \ts $m\geq 1$, be the (finite) \defn{Fibonacci poset} \ts obtained by 
	restriction of \ts $\cF$ \ts 
	to a finite subset \ts $X_m:=\{x_{-m},\ldots,x_m\}$, cf.~\cite{MPP-phi}.  Define
	$$\delta(\cF;x_i,x_j) \, := \, \lim_{m\to \infty} \delta(\cF_m;x_i,x_j)
	$$
	and let 
	\[\delta({{\cF}}) \ := \ \sup_{{i,j\in \zz, \. i<j}} \. \delta(\cF;x_i,x_j).
	\]
	This   poset satisfies \. $\pi({\cF})=2$, since each element is 
	incomparable with exactly two others, and yet it can be shown that
	\. $\delta({\cF})=\BFT$, see~\cite[\S7]{BFT95}.\footnote{
		As one would expect, we have \. $\max \delta(\cF_m;x_i,x_j) \ge \frac13$.  This is a typical case 
		case when the order of operations matters: \. $\lim_{m\to \infty} \max_{i<j} \delta(\cF_m;x_i,x_j) \ne \sup_{i<j} \lim_{m\to \infty}  \delta(\cF_m;x_i,x_j)$.}

	Our proof builds on the approach of Brightwell--Felsner--Trotter~\cite{BFT95},
	which in turn builds on the approach of Kahn--Saks~\cite{KS84}.
	The original strategy is to find a special triple of elements {called a \emph{BFT triple}},
	and the BFT analysis shows that it contains a pair with balance at least~$\BFT$.
	Our strategy is to improve this bound by choosing the BFT triple more carefully.
	
	The finite restrictions of the Fibonacci poset illustrate why this choice matters.
	Consider the restriction to $X_m$ and note that 
	as $m\to\infty$, the adjacent pairs at any fixed $i,j$, the balance constant \ts 
	$\delta(\cF_m;x_i,x_j)\to \BFT \approx 0.2764$, while for the pairs $i,j$ at the endpoints 
	we have  
	\ts $\delta(\cF;x_i,x_j) \to \frac{3-\sqrt{5}}{2}\approx 0.3820$.
	Thus these finite posets contain substantially better-balanced pairs near their boundaries.
	In the two-sided infinite limit, however, the endpoints recede to infinity, and these favorable pairs are lost.
	This explains the obstruction posed by the infinite example: choosing triples near the center \emph{cannot} yield a uniform improvement over \eqref{eq:BFT}.
	In other words, we need a way to distinguish the better-balanced triples from those that reproduce the behavior of the infinite poset.
	
	Our main tool for making this distinction is the {displacement} of an element from its 
	position in the height order.  Formally, let 
	$$h(x)\, := \, \Eb[f(x)] 
	$$
	denote the \defn{height} \ts of a uniform random linear extension \ts $f\in \cE(P)$. 
	Now let \ts $X=\{v_1,\ldots,v_n\}$, where elements $v_i$ are written in nondecreasing 
	order of~$h$.  Finally, define
	\[
	d_i \, := \, h(v_i)-i \qquad \text{and} 
	\qquad
	M \, := \, \max_{1\le i \le n} \. |d_i|\ts,
	\]
	the \defnb{displacement} and \defnb{maximal absolute displacement}, respectively. 
	In $\cF_m$, the displacement $\to 0$ on every element $x_i$, as $m \to\infty$, 
	while the maximal absolute displacement remains bounded away from zero. 
	In other words, the displacement  detects the boundary behavior that disappears 
	in the limit.
	
	Our strategy is therefore to choose a BFT triple with an endpoint attaining the 
	maximum absolute displacement $M$. 
	We then derive a lower bound on the best pair balance of the selected triple in terms of $M$.
	In turn, $M$ admits a positive lower bound depending only on the range $\pi({P})$, which explains the role of the range assumption in the theorem.
	Together, these bounds yield Theorem~\ref{thm:ACPP}.
	Establishing them accounts for most of the technical work in the proof.

	%
	
	\smallskip
	
	\subsection{Poset of large width}
	
	Note that the improvement over $\BFT$ in Theorem~\ref{thm:ACPP} tends to zero 
	as the range $\pi({P})$ grows: \. $\eta_D\to 0$ \. as \.  $D\to \infty$.    
	Thus, we need a different strategy in this case.  
	Recall that the \defnb{width} $w({P})$  of a poset ${P}$ is the maximum 
	size of a set of pairwise incomparable elements of ${P}$.

	\smallskip
	
	\begin{thm}[{\rm \defna{The case of large width}}{}]\label{thm:Aires}
		There exists a universal constant $K>0$, such that for 
		every finite poset $P=(X,\prec)$ \. of width \ts $w(P)>K$,
		we have:
		\[ \delta(P)  \ > \ e^{-1} \. - \. 10^{-100}.  \]
	\end{thm}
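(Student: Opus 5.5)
We will argue as follows. If $w(P)>K$, then by Dilworth's theorem (in its dual form, or simply by definition of width) $P$ contains an antichain $A=\{a_1,\ldots,a_k\}$ with $k=w(P)>K$ large. The idea is that a large antichain forces many pairs of elements whose relative order in a random linear extension is genuinely random. We then want to exhibit two elements with $\Pb[f(x)<f(y)]\in[e^{-1}-10^{-100},\,1-e^{-1}+10^{-100}]$. The constant $e^{-1}$ is suggestive: it is the limiting probability in problems of ``being the first among many'' type. For example, for $k$ i.i.d.\ continuous positions, the chance that a given element precedes a fixed other with a random threshold behaves like $(1-1/k)^k\to e^{-1}$. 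So we aim for an averaging or pigeonhole argument over the antichain.

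The plan proceeds in three steps.

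\emph{Step 1.} For each $a\in A$, consider the height $h(a)=\Eb[f(a)]$. Order the antichain elements by height, $h(a_1)\le\cdots\le h(a_k)$. For consecutive elements, if $\Pb[f(a_i)<f(a_{i+1})]\le 1-e^{-1}+10^{-100}$ for some $i$, we are done. Indeed, the complementary bound $\Pb\ge e^{-1}-10^{-100}$ should follow from the height ordering, via a comparison argument using the Kahn--Saks/Stanley log-concavity of $\Pb[f(a_{i+1})-f(a_i)=t]$ in $t$. Log-concavity of the distribution of $f(y)-f(x)$ (available from the Alexandrov--Fenchel inequalities, as in \cite{KS84}) together with $\Eb[f(a_{i+1})-f(a_i)]\ge 0$ gives $\Pb[f(a_{i+1})>f(a_i)]\ge$ some constant. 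The sharp version of this bound for log-concave integer distributions with nonnegative mean is about $e^{-1}$, which is the source of the constant.

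\emph{Step 2.} Otherwise, every consecutive pair is very unbalanced: $a_i$ precedes $a_{i+1}$ with probability $>1-e^{-1}$. We will then show that the heights must spread out. Specifically, $h(a_k)-h(a_1)$ grows linearly in $k$, so some incomparable elements are far apart in height. We then pass to elements of $P$ outside $A$: an element $z$ comparable to both $a_1$ and $a_k$ would violate the antichain property via transitivity. We use this to locate, between heights, a pair whose balance is controlled by a log-concave distribution with mean close to zero, and which therefore has balance near $1/2\ge e^{-1}$.

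\emph{Step 3.} The error $10^{-100}$ is absorbed by taking $K$ large. This makes the discrete log-concave distributions approximately continuous, where the extremal bound is exactly $e^{-1}$.

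The main obstacle is Step 1's quantitative claim. We must show that for a log-concave distribution on $\mathbb Z$ arising from a large antichain, with nonnegative mean, the mass on the positive side is at least $e^{-1}-o(1)$. The finite/discrete case may lose mass at $0$, which is why width must be large: width spreads the distribution so the atom at any point is small. We would first try to bound $\Pb[f(y)-f(x)=t]\le C/k$ using that $k$ antichain elements can be freely interleaved, and then apply the continuous Grünbaum-type inequality for log-concave densities.
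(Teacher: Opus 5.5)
\textbf{Verdict: there is a genuine gap.} Step~2 never produces a well-balanced pair, and nothing in the proposal replaces the ideas the paper uses at that point.

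\textbf{Step 1 is not the obstacle.} For any poset, with no width assumption, $h(x)\le h(y)$ already gives $\Pb[f(x)<f(y)]\ge e^{-1}$. This is the Kahn--Linial bound, Lemma~\ref{lem:KL}. It comes from coupling $f$ with a uniform point of the order polytope, so that $Z_y-Z_x$ has a genuine log-concave density. Atom bounds of order $C/k$ and the discrete-to-continuous approximation of Step~3 are therefore unnecessary.

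\textbf{The gap is in Step 2.} What is needed is a pair $x,y$ with $|h(x)-h(y)|$ small \emph{relative to} $\sigma=\sqrt{\operatorname{Var}(Z_x-Z_y)}$. The log-concave estimate is $\min\{\Pb[V<0],\Pb[V>0]\}\ge e^{-1}-|\mu|/\sigma$. Even $\mu=0$ only guarantees $e^{-1}$, not balance near $1/2$. Step~2 gives no mechanism for finding such a pair.
\begin{itemize}
\item The one structural fact you offer is false as stated. An element $z$ comparable to both $a_1$ and $a_k$ (say $z\prec a_1$ and $z\prec a_k$) does not violate the antichain property; only $a_1\prec z\prec a_k$ does. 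The corrected version says nothing about $|\mu|/\sigma$.
\item Showing that the heights of $A$ spread out yields no contradiction by itself, since $n$ can be arbitrarily large compared with $|A|$.
\item In the paper this spreading is the easy half of the argument. The deficit $\delta(P)\le e^{-1}-\ep$ gives $d(x,y)\le\Delta(x,y)/\ep$. The Aires--Kahn window estimates, $d(x,y)\ge a_x/\sqrt3$ with $a_x\ge1$ and $\sum_{x\in B}a_x\ge|B|^2/2$, then give $|B|^2\le(4\sqrt3/\ep)H(B)$ for every antichain $B$, and $w(P)\le(2\sqrt3/\ep)\gap(P)$.
\item Even your claimed linear growth of $h(a_k)-h(a_1)$ needs a lower bound on $\sigma$. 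The window estimate is what supplies it.
\end{itemize}

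\textbf{The missing idea.} You need an opposing mechanism that manufactures a \emph{large} antichain whose heights are \emph{clustered}. The paper gets it as follows.
\begin{itemize}
\item Take the largest gap $h(v)-h(u)=\gap(P)$ in the height sequence and the order ideal $I=\{x:h(x)\le h(u)\}$.
\item Add relations putting every element of height at most $h(u)-\ell$ below $u$ and every element of height at least $h(v)+\ell$ above $v$, where $\ell=O(\gap(P)\log^2\gap(P))$.
\item The cost of this conditioning is controlled by the XYZ-based bound $d(v_i,v_j)^2\le\gap(P)\,\Delta(v_i,v_j)/\ep^2$ together with log-concave tail estimates. Conditioning shifts height differences across the gap by at most a factor of~$2$.
\item The Haqi--Kahn ideal inequality (Lemma~\ref{lem:width-ideal-gap}), applied in the modified poset $Q$, shows that one of the canopy antichains $\max_Q(I)$, $\min_Q(X\setminus I)$ has size at least $\gap(P)/4$. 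Its original heights lie within $\ell$ of the gap.
\item Comparing $|B|\gtrsim\gap(P)$ and $H(B)\le\ell$ with $|B|^2\le(4\sqrt3/\ep)H(B)$ bounds $\gap(P)$ in terms of $\ep$, and hence bounds the width.
\end{itemize}
Nothing in your proposal substitutes for the ideal inequality or this clustering step, so as written the argument does not prove the theorem.
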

	
	\smallskip
	
	We  prove Theorem~\ref{thm:Aires} in Section~\ref{s:proof:Aires}.
	Note that \. $e^{-1} \approx {0.3679}  \. >  \. \frac13\ts$, so the result implies the \. $\frac13$--$\frac23$ \. Conjecture 
	in this case.  
	The constant $e^{-1}$ in Theorem~\ref{thm:Aires} is an artifact of our use
	of Gr\"unbaum-type inequalities~\cite{Gru60} as the log-concavity input in the proof. 
	The celebrated \defn{Kahn--Saks conjecture}  \cite[p.~114]{KS84}  claims 
	a stronger conclusion, that \. $\delta({P})\to \frac12$ \. as \. $w({P})\to\infty$.
	This conjecture was recently resolved by the first author in a companion paper \cite{Air26} (in fact, that paper gave an inspiration for this work), by incorporating the more refined \emph{Koml\'os selection
		theorem}~\cite[Thm.~8.1]{AK25a}, together with substantially more involved
	technical estimates.
	We present the $e^{-1}$ bound here because it admits a much simpler proof
	and already suffices for our purposes.

	The proof of Theorem~\ref{thm:Aires} centers on \defnb{canopy antichains},
	namely antichains of the form
	\[
	\max\bigl(\{y\in X:h(y)\leq h(x)\}\bigr)
	\qquad\text{or}\qquad
	\min\bigl(\{y\in X:h(y) >  h(x)\}\bigr),
	\]
	where $x\in X$, and $\min(S)$ and $\max(S)$ denote the sets of
	minimal and maximal elements of the poset induced by $S$, respectively.
	Assuming that $\delta(P)$ lies below $e^{-1}$ by a fixed positive amount
	and that $w(P)$ is sufficiently large, we use these
	sets to construct a large antichain whose elements have closely
	clustered average heights in $P$.
	We then show that these two properties are incompatible with
	the assumed bound on $\delta(P)$, proving the theorem.
	Constructing such an antichain accounts for the bulk of the technical
	work and draws on the recent work of Aires and Kahn~\cite{AK25a}
	on \emph{window estimates} of posets, Shepp's \emph{XYZ correlation inequality}~\cite{She82}
	(see also~\cite{AS16,CP-LE}), and Haqi's recent remarkable proof of the
	\emph{Haqi--Kahn ideal inequality}~\cite{Haq26}.


	\smallskip
	
	\subsection{Poset of small width and large range}
	It remains to show that Theorem~\ref{thm:main} holds for posets with \emph{small width} 
	and \emph{large range}.  This is achieved by the following theorem of Aires and 
	Kahn~\cite{AK25b} which gives a much stronger asymptotic bound than we need:
	
	\smallskip
	
	\begin{thm}[{\textnormal{\defna{The case of small width and large range}}}~{\cite[Thm~1.6]{AK25b}}]\label{thm:AK}
		For all \ts $K,\eps>0$, there exist \ts $L=L(K,\eps)>0$, such that 
		for every finite poset $P=(X,\prec)$ \. of width \ts $w(P)< K$ \ts 
		and range \ts $\pi(P)>L$, we have:	
		$$
		\delta(P) \. > \. \frac12 \. - \. \eps\ts.
		$$
	\end{thm}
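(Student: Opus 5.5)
Theorem~\ref{thm:AK} is quoted from \cite[Thm~1.6]{AK25b} and is used here as a black box. The following sketches how we would prove it directly. The idea is to reduce the statement to an \emph{anticoncentration} property of a single integer-valued random variable, and then to prove that property from log-concavity together with the bounded width.

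\emph{Reduction to one chain.} Fix $x\in X$ with $\pi(x)>L$. By Dilworth's theorem, $X$ is covered by fewer than $K$ chains. Hence some chain $C$ contains more than $L/K$ elements incomparable to~$x$. These elements form a consecutive segment $c_1\prec c_2\prec\cdots\prec c_m$ of $C$, with $m>L/K$. For uniform $f\in\cE(P)$, define
\[
N \, := \, \#\{\ts i\in[m] : f(c_i)<f(x)\ts\}\in\{0,1,\ldots,m\}.
\]
Since $C$ is a chain, the event $f(c_k)<f(x)$ is exactly the event $N\geq k$. So $k\mapsto \Pb[f(c_k)<f(x)]$ decreases from $\Pb[N\ge 1]$ to $\Pb[N\ge m]$ in steps of size $\Pb[N=k]$.

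\emph{Choosing the pair.} Suppose we know that
\[
\Pb[N=k]\le 2\eps \quad\text{for all } 0\le k\le m.
\]
Then the endpoint probabilities $\Pb[N\ge 1]\ge 1-2\eps$ and $\Pb[N\ge m]\le 2\eps$ lie on opposite sides of $\frac12$, and each step is at most $2\eps$. Hence some $k$ satisfies $|\Pb[N\ge k]-\tfrac12|\le\eps$, which gives $\delta(P;x,c_k)\ge \frac12-\eps$, as required. The theorem is therefore reduced to showing that $\max_k \Pb[N=k]\to 0$ as $m\to\infty$, uniformly over posets of width less than~$K$.

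\emph{Log-concavity.} The sequence $\Pb[N=k]$ is log-concave in~$k$. This follows from Stanley's Alexandrov--Fenchel argument applied to the position of $x$ relative to the chain~$C$; equivalently, it is a Kahn--Saks type mixed-volume inequality. For a log-concave distribution, the largest atom is at most of order $1/\sqrt{\mathrm{Var}(N)}$, and if the support is short it is controlled by the length of an interval carrying most of the mass. So it suffices to prove that $N$ is not essentially supported on a window of bounded length.

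\emph{Main obstacle: spreading.} The hard step is to show that $N$ spreads out over a window whose length tends to infinity with $m/K$. This is exactly where bounded width must enter; in wide posets the analogous statement needs a different mechanism, cf.\ Theorem~\ref{thm:Aires}.

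The first approach we would try is the window estimates of Aires--Kahn \cite{AK25a}, run as a contrapositive. Suppose $N$ were concentrated on a short window $W$ with probability $1-o(1)$. Then $f(x)$ would be pinned between $f(c_{\min W})$ and $f(c_{\max W})$. In particular, the elements $c_1,\ldots,c_{\min W-1}$ would lie below $x$ with probability close to~$1$, even though they are incomparable to~$x$.

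To turn this into a contradiction, we would compare with the other fewer than $K$ chains. In a poset of width less than $K$, the heights $h(\cdot)$ along each chain increase at a controlled rate. Consequently, each element has only $O_K(1)$ elements of any given chain within bounded height distance of it. We would then use an injection, or a ``sliding'' argument that moves $x$ past $c_{\min W-1}$ through linear extensions differing by adjacent transpositions, to show that $\Pb[f(c_{\min W-1})>f(x)]$ is bounded below by a quantity depending only on~$K$. Iterating this over the many available steps would show that the mass of $N$ cannot stay within a bounded window.

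We expect that making this last step quantitative, with constants depending only on $K$ and not on $n$, is the genuine difficulty. It is the content of \cite{AK25b}.
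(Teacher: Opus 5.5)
Your first two steps are sound. The elements of a chain that are incomparable to $x$ do form a convex segment, $\{f(c_k)<f(x)\}=\{N\ge k\}$, and uniformly small atoms of $N$ do give a nearly balanced pair $(x,c_k)$. The gap is in the step you call the main obstacle. It is not merely hard to make quantitative: for an arbitrary $x$ of large range it is false, already in width two.

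Take chains $c_1\prec\cdots\prec c_m$ and $d_1\prec\cdots\prec d_{2J+1}$, set $x:=d_{J+1}$, fix $2\le k_0<m$, and add only the relations $d_J\prec c_{k_0+1}$ and $c_{k_0}\prec d_{J+2}$. Then $w(P)=2$, and $x$ is incomparable to every $c_i$, so $\pi(x)=m$. On $\{N=k\}$ a linear extension splits at $x$ into a prefix on $\{d_1,\dots,d_J,c_1,\dots,c_k\}$ and a suffix on the remaining elements. Counting both parts gives
\[
\Pb[N=k]\ \propto\ \binom{J+\min(k,k_0)}{J}\binom{J+m-\max(k,k_0)}{J}.
\]
On each side of $k_0$, one factor is constant and the other shrinks by a ratio of at most $m/J$ per step away from $k_0$. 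With $J=m^2$ this gives $\Pb[N=k_0]\ge 1-2/(m-1)$. Hence $\delta(P;x,c_i)=O(1/m)$ for every $i$, and since $x$ is incomparable only to elements of $C$, it lies in no balanced pair at all. The poset does have nearly balanced pairs, inside the block formed by $c_1,\dots,c_{k_0}$ and $d_1,\dots,d_J$, but none of them involves $x$. So your third paragraph reduces the theorem to a false statement. In particular, no sliding argument can bound $\Pb[f(c_{\min W-1})>f(x)]$ below in terms of $K$: here $W=\{k_0\}$, and that probability is at most $2/(m-1)$.

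This is exactly the phenomenon the paper points to in its summary of \cite{AK25b}: an element of large range need not have large variance. In the example, $\operatorname{Var} f(x)=\operatorname{Var} N=O(1/m)$, whereas $f(c_1)$ has standard deviation of order $J/k_0\to\infty$. The argument there therefore does not work with a prescribed $x$. Its hard step is to show that a poset of large range contains \emph{some} element whose position has large variance. That element is located in the union of a maximal pair of sets with no comparabilities between them, via a reduction to a special class of width-two posets. Only after that does it show, by repeated applications of the XYZ inequality, that in bounded width such an element lies in a nearly balanced pair. Your sketch has no counterpart to the first step, and the step you defer to \cite{AK25b} is not something that paper makes quantitative.

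Separately, your claimed log-concavity of $(\Pb[N=k])_k$ is not Stanley's inequality, which concerns $f(x)$ itself. Stanley's inequality does apply in the width-two example above, where $N=f(x)-J-1$, so the failure there is concentration, not lack of log-concavity. In general, though, log-concavity of $N$ fails. Take $x$ isolated, $a\prec e\prec b$, $c_1\prec c_2$, $a\prec c_1$ and $c_2\prec b$; this has width $3$ and chain cover $\{x\},\{c_1,c_2\},\{a,e,b\}$. One gets $\Pb[N=0]=\Pb[N=2]=7/18$ and $\Pb[N=1]=4/18$, so $\Pb[N=1]^2<\Pb[N=0]\,\Pb[N=2]$.
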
		
	
	\smallskip
	
	In other words, for posets of bounded width \ts $w(P)<K$, we have:
	\[  \pi(P) \. \to \. \infty \quad \ \ \Longrightarrow \quad \ \ \delta(P) \. \to \. \frac12\..    \]
	Clearly, by taking \ts $\eps=\frac{1}{6}$ \ts we obtain the \. $\frac{1}{3}$--$\frac{2}{3}$ \. 
	Conjecture for all posets of bounded width and large enough range.  
	Theorem~\ref{thm:main} now follows from combining Theorem~\ref{thm:ACPP}, 
	Theorem~\ref{thm:Aires}, and Theorem~\ref{thm:AK}.

	We note that all constants in Theorems~\ref{thm:Aires} and~\ref{thm:AK} can be made explicit.
	We omit their values for readability, as determining them is not a primary aim of this paper, 
	see also~$\S$\ref{ss:finrem-hope}.
	In fact, the proof in \cite{AK25b} uses only elementary probabilistic estimates.
	

	\smallskip
	
	\subsection{Paper structure}
	The paper is organized as follows.
	In Section~\ref{s:prelim-ACPP}, we review results in the literature, and 
	in Section~\ref{s:small-range} we establish new technical lemmas we need 
	towards the proof of Theorem~\ref{thm:ACPP}.
	The proof of Theorem~\ref{thm:ACPP} is given in Section~\ref{s:thm:ACPP}. 
	Then, in Section~\ref{s:prelim-Aires}, we review the results in the literature 
	towards the proof of Theorem~\ref{thm:Aires}.  The proof itself is given in 
	Section~\ref{s:proof:Aires}.  We conclude with final remarks in Section~\ref{s:finrem}.

	\medskip

	\section{Preliminaries for Theorem~\ref{thm:ACPP} }\label{s:prelim-ACPP}
	
	
	
	\subsection{Preliminaries}

	Throughout this paper, let \. $P=(X,\prec)$ \. be a finite poset,
	where $X$ is the ground set and $\prec$ is a partial order on $X$.
	Write $n:=|X|$ and $[n]:=\{1,\ldots,n\}$.
	We use $f$ to denote a uniformly random linear extension of $P$,
	and $\Pb$ and $\Eb$ to denote the corresponding probability measure
	and expectation, respectively.

	Let
	\[ \BFT \ := \  \frac{5-\sqrt{5}}{10} \ \approx \ {0.276393\ldots}    \]
	{Let $D\geq2$ be a fixed integer throughout Sections~\ref{s:prelim-ACPP}--\ref{s:thm:ACPP}, and let $\eta_D$ be the constant in Theorem~\ref{thm:ACPP}}
	We will assume throughout Sections~\ref{s:prelim-ACPP}--\ref{s:thm:ACPP}  that $\delta({P})\leq\BFT+\eta_D$, as otherwise the theorem follows immediately.
	Note that $0.2764$ is strictly greater than $\BFT+\eta_D$, the lower bound for $\delta(P)$ in Theorem~\ref{thm:ACPP}.
	We begin with a lemma that will be used repeatedly throughout the next three sections.		
	
	\smallskip
	
	\begin{lemma}[{\cite[p.~365]{KL90}},{\cite[Cor.~5.1(b)]{AK25a}}]\label{lem:KL}
		Let $x,y$ be distinct elements satisfying $h(x)\leq h(y)$.
		Then
		\[ \Pb[\cL(y) < \cL(x) ]  \ \leq \  1-e^{-1}. \]
	\end{lemma}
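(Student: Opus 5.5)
We reduce the lemma to a one-dimensional log-concavity statement and then apply a Grünbaum-type inequality. Fix $x \neq y$ with $h(x) \le h(y)$, and let $Z := f(y) - f(x)$, a nonzero integer-valued random variable. By linearity, $\Eb[Z] = h(y) - h(x) \ge 0$, and we must show $\Pb[Z<0] \le 1-e^{-1}$, i.e.\ $\Pb[Z>0]\ge e^{-1}$.

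\textbf{Step 1: log-concavity of the gap.} The key input is that the law of $Z$ is log-concave on the integers, in the form that makes a Grünbaum centroid bound available. First, if $x$ and $y$ are comparable the claim is trivial: if $y\prec x$ then $h(x)>h(y)$, contradicting the hypothesis, and if $x\prec y$ then $\Pb[f(y)<f(x)]=0$. So assume $x \parallel y$. Use Stanley's order polytope. Consider the polytope $\cO$ of order-preserving maps $X\to[0,1]$, and the linear functional $\phi(t) := t_y - t_x$ on it. The uniform measure on $\cO$ projects under $\phi$ to a density $g$ on $[-1,1]$. By Brunn--Minkowski, $g$ is $(n-1)^{-1}$-concave, and in particular log-concave. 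One then needs the discrete statement for $Z$ itself. The cleanest route is the Kahn--Saks/Stanley approach:
\begin{itemize}
\item the numbers $N_k := |\{f : f(y)-f(x)=k\}|$ for $k>0$ are mixed volumes of two slices of $\cO$, and similarly for $k<0$;
\item Alexandrov--Fenchel then gives $N_k^2 \ge N_{k-1}N_{k+1}$ on each side.
\end{itemize}
Alternatively, follow Kahn--Linial: pass to the continuous analogue, where a uniform random point of $\cO$ is sorted to produce a uniform linear extension, and compare $\Pb[f(y)>f(x)]$ with $\Pb_\cO[t_y>t_x]$. I would adopt this continuous route, since the comparison is exact: $\Pb[f(y)>f(x)] = \mathrm{vol}\{t\in\cO : t_y>t_x\}/\mathrm{vol}(\cO)$. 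The remaining issue is relating the heights $h$ to the centroid of $\cO$.

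\textbf{Step 2: centroid and Grünbaum.} Let $c$ be the centroid of $\cO$. A uniform point of $\cO$ induces a uniform linear extension via its ordering, and conditionally on the ordering the coordinates are distributed as uniform order statistics. Hence $\Eb[t_z] = \Eb[f(z)]/(n+1) = h(z)/(n+1)$. So $c_y - c_x = (h(y)-h(x))/(n+1) \ge 0$, i.e.\ the centroid lies in the closed halfspace $\{t_y\ge t_x\}$. Grünbaum's theorem says every halfspace containing the centroid of a convex body has at least $(\tfrac{n}{n+1})^n \ge e^{-1}$ of its volume. Therefore $\Pb[f(y)>f(x)] = \mathrm{vol}(\cO\cap\{t_y\ge t_x\})/\mathrm{vol}(\cO) \ge e^{-1}$, since the hyperplane $\{t_y=t_x\}$ has measure zero. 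This gives $\Pb[f(y)<f(x)]\le 1-e^{-1}$.

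The main point to check carefully is the identity $\Eb[t_z]=h(z)/(n+1)$. It follows because, given the linear extension $f$, the sorted coordinates are the order statistics of $n$ i.i.d.\ uniforms, with expectations $k/(n+1)$. It is also worth confirming that the direction is right: Grünbaum bounds the smaller side from below, and the halfspace containing the centroid is $\{t_y\ge t_x\}$. With that in place the Alexandrov--Fenchel discussion in Step 1 is unnecessary, and the proof reduces to the order-polytope correspondence together with Grünbaum's inequality.
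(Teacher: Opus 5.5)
Your proof is correct. It is essentially the Kahn--Linial Gr\"unbaum argument that the paper cites (via \cite{KL90} and \cite{AK25a}) rather than reproves: the centroid of $\cO({P})$ has coordinates $h(z)/(n+1)$, so it lies in the halfspace $\{t_y\ge t_x\}$, and Gr\"unbaum's theorem gives that halfspace at least $(n/(n+1))^n\ge e^{-1}$ of the volume. As you note, the Alexandrov--Fenchel material in Step~1 is unnecessary, and an equivalent route applies the one-dimensional Gr\"unbaum bound for log-concave densities to $Z_y-Z_x$ from Lemma~\ref{lem:lc-density}.
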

	
	\smallskip
	This result was first proved by Kahn and Linial~\cite[p.~365]{KL90}  via Gr\"unbaum-type inequalities~\cite{Gru60}.
	Note that they stated the result  for the case $h(x)=h(y)$, although their proof also covers the general case.
	A proof of the general statement appears in~\cite[Cor.~5.1(b)]{AK25a}.
	The following consequence of Lemma~\ref{lem:KL} will be especially useful.

	\smallskip
	
	\begin{cor}\label{lem:comp}
		Suppose that $\delta(P) \leq 0.2764$. Then,
		for all distinct $x,y \in X$ satisfying $h(x) \leq h(y)$,
		\[ \Pb[\cL(y)< \cL(x)] \ \leq \  0.2764.  \]
	\end{cor}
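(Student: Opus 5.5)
The plan is to combine the definition of $\delta(P)$ with Lemma~\ref{lem:KL}. Fix distinct $x,y\in X$ with $h(x)\le h(y)$, and set $p:=\Pb[\cL(y)<\cL(x)]$. The two events $\{\cL(x)<\cL(y)\}$ and $\{\cL(y)<\cL(x)\}$ are complementary, so
\[
\delta(P;x,y)\ =\ \min\{p,\,1-p\}.
\]
By definition of $\delta(P)$ as a maximum over pairs, $\delta(P;x,y)\le \delta(P)\le 0.2764$. Hence either $p\le 0.2764$ or $1-p\le 0.2764$; the second case means $p\ge 0.7236$.

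The key step is to rule out the second alternative using Lemma~\ref{lem:KL}. Since $h(x)\le h(y)$, that lemma gives
\[
p\ \le\ 1-e^{-1}\ \approx\ 0.6321,
\]
which is strictly smaller than $0.7236$. So $p\ge 0.7236$ is impossible, and therefore $p\le 0.2764$, as claimed.

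I do not expect any real obstacle. The only point needing care is the numerical comparison $1-e^{-1}<1-0.2764$, which is equivalent to $e^{-1}>0.2764$; this holds because $e^{-1}\approx 0.3679$.
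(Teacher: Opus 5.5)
Your proposal is correct and is essentially the paper's argument: both write $\delta(P;x,y)=\min\{p,1-p\}\le 0.2764$ and use Lemma~\ref{lem:KL} ($1-p\ge e^{-1}>0.2764$) to rule out the branch $1-p\le 0.2764$. The only difference is that the paper phrases this as a contradiction, while you do a direct case split.
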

	
	\smallskip
	
	\begin{proof}
		Suppose to the contrary that the claim is false,
		then
		\[ \delta(P;x,y) \ = \  \min  \{ \Pb[\cL(x)< \cL(y)], \Pb[\cL(y)< \cL(x)]   \} \  > \   \min \{ e^{-1}, 0.2764 \} \ = \ 0.2764,\]
		contradicting the assumption that $\delta(P)\leq 0.2764$.
	\end{proof}

	\smallskip

	We now   review the results of Brightwell, Felsner, and Trotter~\cite{BFT95} that will be used in the proof of Theorem~\ref{thm:ACPP}.
	For an element $x$ of a poset ${P}$, the \defnb{height} $h(x):=\Eb[\cL(x)]$ is the expected position of $x$ in a uniformly random linear extension $\cL$ of ${P}$.
	We call a triple $(x,y,z)$ of distinct elements of ${P}$ a \defnb{BFT triple} if these elements do not form a chain and
	\[
	h(x) \ \leq \ h(y) \ \leq \ h(z) \ \leq \ h(x)+2.
	\]
	The use of height functions in this setting goes back to the breakthrough work of Kahn and Saks~\cite{KS84}, who used pairs of incomparable elements $x,y$ satisfying $h(x)\leq h(y)\leq h(x)+1$ to establish the bound \[ \delta({P};x,y)\geq 3/11\approx 0.2727. \] Building on this approach, Brightwell, Felsner, and Trotter~\cite{BFT95} showed that a BFT triple $(x,y,z)$ yields
	\[ \max\big\{\delta({P};x,y), \. \delta({P};x,z), \. \delta({P};y,z)\big\} \ >  \  \BFT \  \approx \ {0.276393\ldots}. \]
	In this paper, we refine this approach by showing that every finite nonchain poset ${P}$ with at least three elements and \. $\pi({P})\leq D$ \. contains a BFT triple $(x,y,z)$ satisfying the stronger bound \[ \max\big\{\delta({P};x,y), \. \delta({P};x,z), \. \delta({P};y,z)\big\} \ > \ \BFT+\eta_D, \] thereby proving Theorem~\ref{thm:ACPP}.


	Write \. $u \parallel v$ \. if $u$ and $v$ are incomparable in ${P}$.
	Following~\cite[Section~4]{BFT95}, and passing to the dual poset and reversing the order of the triple if necessary, we {split the analysis into} the following four cases:
	\begin{enumerate}[label=\textnormal{Case \Alph*.},leftmargin=*]
		\item $x \prec z$, \. $y \prec z$, \. and \. $x \parallel y$.
		\item $y \prec z$, \. $x \parallel y$, \. and \. $x \parallel z$.
		\item $x,y,z$ are pairwise incomparable.
		\item $x \prec z$, \. $x \parallel y$, \. and \. $y \parallel z$.
	\end{enumerate}

	Cases~A and~B were treated in~\cite[Thms.~4.1 and~4.2]{BFT95}, where it was shown that
	\[
	\Pb[ \cL(y) \. < \. \cL(x)] \quad \geq \quad \frac{1}{3}.
	\]
	Together with Lemma~\ref{lem:KL}, this gives
	\[
	\frac{1}{3} \quad \leq \quad \Pb[ \cL(y) \. < \. \cL(x)]
	\quad \leq \quad 1-e^{-1} \quad < \quad \frac{2}{3}.
	\]
	Thus, in either case, \. $\delta({P};x,y)\geq 1/3$\..
	In Case~C, Brightwell, Felsner, and Trotter~\cite[Sec.~6]{BFT95} established the bound $\delta(P)\geq \BFT$, which is insufficient for the conclusion of Theorem~\ref{thm:ACPP}.
	Nevertheless, their argument yields the following slightly stronger bound.

	\smallskip
	
	\begin{lemma}[{\rm \defna{Case C of BFT}~{\cite[\S6]{BFT95}}}{}]\label{lem:BFT-C}
		Let $(x,y,z)$ be a BFT triple whose elements are pairwise incomparable.
		Then
		\[
		\max \big\{ \delta(P;x,y), \. \delta(P;y,z) \big\} \  >  {0.2764}.
		\]
	\end{lemma}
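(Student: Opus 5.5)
Following \cite[\S6]{BFT95}, I would argue by contradiction. Suppose that $\delta(P;x,y)\le 0.2764$ and $\delta(P;y,z)\le 0.2764$. Since $h(x)\le h(y)\le h(z)$, Corollary~\ref{lem:comp} (applicable because each pair involved has balance at most $0.2764$; alternatively Lemma~\ref{lem:KL} combined with the balance assumption) gives
\[
\Pb[f(y)<f(x)]\le 0.2764 \quad\text{and}\quad \Pb[f(z)<f(y)]\le 0.2764 .
\]
The plan is to show that, for a pairwise incomparable triple with $h(z)\le h(x)+2$, these two inequalities are incompatible. The BFT argument in fact yields the contradiction under the weaker assumption that both probabilities are at most $\BFT$. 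The point is that the constant $0.2764$ is still strictly below the threshold at which the BFT inequalities fail, so the same argument goes through.

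\textbf{Step 1: set up the geometric input.} For each $k$, let $N_k$ count the linear extensions with $f(y)-f(x)=k$, and similarly for the pair $(y,z)$. By the Stanley/Alexandrov--Fenchel inequalities these sequences are log-concave. The Ahlswede--Daykin (or FKG) correlation inequality used in \cite{BFT95} couples the events $\{f(x)<f(y)\}$ and $\{f(y)<f(z)\}$; for instance, it gives a lower bound on $\Pb[f(x)<f(y)<f(z)]$ in terms of the two marginals. I would rewrite the height constraint
\[
h(z)-h(x)=\Eb[f(z)-f(y)]+\Eb[f(y)-f(x)]\le 2
\]
as a sum over the two distance distributions.

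\textbf{Step 2: reduce to an extremal problem.} Log-concavity, together with the small mass $p:=\Pb[f(y)<f(x)]$ and $q:=\Pb[f(z)<f(y)]$ on the negative side, forces the positive parts of the distance distributions to be at least geometric with some ratio. This gives
\[
\Eb[f(y)-f(x)]\ \ge\ g(p),\qquad \Eb[f(z)-f(y)]\ \ge\ g(q)
\]
for an explicit decreasing function $g$. The correlation inequality sharpens this, so that the combined bound exceeds $2$ unless $p$ and $q$ are near $\BFT$. The extremal configuration is the Fibonacci poset, where the ratio $\phi^{-1}$ yields $\BFT$.

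\textbf{Step 3: numerical check (expected main obstacle).} The main difficulty is verifying that the BFT extremal function is strictly increasing in $p$ and $q$ near $\BFT$, with no slack lost in the inequalities. That is, I need the resulting lower bound on $h(z)-h(x)$ to be strictly greater than $2$ whenever $\max(p,q)\le 0.2764$, not merely when $\max(p,q)<\BFT$. The margin here is tiny, since $0.2764-\BFT\approx 6\times10^{-6}$, so strictness could fail.

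My first attempt would be to trace the BFT inequality, which takes the form $F(p,q)\le 2$ with $F$ continuous and $F>2$ on the whole region where $\max(p,q)\le\BFT$, with equality approached only in the limit. One then needs either an explicit evaluation of $F$ at $(0.2764,0.2764)$ or a continuity/compactness argument. If equality is attained at $p=q=\BFT$, continuity alone would not suffice. In that case I would use the finiteness of $P$: the extremal geometric distributions are infinite, whereas in a finite poset $f(y)-f(x)\le n$, and this truncation gives strict inequality. Quantifying that strictness enough to absorb the $6\times10^{-6}$ gap is where I expect the real work to lie.
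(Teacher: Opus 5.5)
There is a genuine gap. The proposal never supplies the quantitative argument, and the picture behind your Step~3 is mistaken. The Fibonacci poset is the extremal example for Case~D, not Case~C. Its consecutive triples satisfy $x_0\prec x_2$, and since $x_i\prec x_k$ whenever $k-i\geq 2$, it contains no pairwise incomparable triple at all. In Case~C the BFT argument is \emph{not} tight at $\BFT$. Carried out explicitly, it gives $\max\{p,p'\}\geq 67/242\approx 0.27686$, so $0.2764$ is reached with room to spare and there is no $6\times10^{-6}$ margin to fight for. Your fallback would also fail if it were needed. The effect of the truncation $f(y)-f(x)\leq n$ vanishes as $n\to\infty$, so it cannot give a margin that is uniform over all finite posets. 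And if Case~C really were asymptotically tight at $\BFT$, the lemma, being a uniform statement, would be false.

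What is missing are the specific inputs that create this slack. Write $p:=\Pb[f(y)<f(x)]$ and $p':=\Pb[f(z)<f(y)]$ (your $p,q$). Your Step~2 alone (log-concavity forcing geometric tails) combines the packing lemma $h(z)-h(x)\geq H(p,t)+H(p',t')$ with the bound $H(q,u)\geq H(q,1)\geq (5-11q)/2$ for the packed height function. This yields only $2\geq 5-\tfrac{11}{2}(p+p')$, i.e.\ $\max\{p,p'\}\geq 3/11$, which is too weak. Also, a lower bound on $\Pb[f(x)<f(y)<f(z)]$ is not the correlation input that is used.

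The paper, following BFT, sets $t:=\Pb[f(x)=f(y)+1]/p$ and $t':=\Pb[f(y)=f(z)+1]/p'$. It then invokes the Case~C-specific estimate \cite[Lem.~6.2]{BFT95}, $\Pb[f(x)-f(y)\geq2]+\Pb[f(y)-f(z)\geq2]\geq 1/11$, i.e.\ $p(1-t)+p'(1-t')\geq 1/11$. From $p+p'\geq 6/11$ one first localizes $p,p'\in(1/5,3/10)$. An explicit analysis of the packed sequences then shows $H(p,t)-H(p,1)\geq 1/12$ when $t\leq v_p$, where $v_p$ is the positive root of $(1+v)(1+2v)=1/p$. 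Otherwise $H(p,t)-H(p,1)\geq \tfrac12 p(1-t)$, and likewise for $(p',t')$. Either way $h(z)-h(x)\geq 5-\tfrac{11}{2}(p+p')+\tfrac{1}{22}\geq 5-11\cdot 0.2764+\tfrac{1}{22}>2$, contradicting the BFT triple condition.
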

	
	\smallskip
	
	We include the details to verify that their argument yields the bound $0.2764$ in Appendix~\ref{s:BFT-C}.

	



	
	\subsection{Case D of  Brightwell--Felsner--Trotter analysis}\label{ss:case-D}
	
	Our treatment of Case~D requires new arguments involving the range $\pi(P)$.
	We develop these in Sections~\ref{s:small-range} and~\ref{s:thm:ACPP},
	building on the following quantitative analysis of Case~D.
	Let $(x,y,z)$ be a BFT triple in Case~D, i.e. $h(x)\leq h(y) \leq h(z) \leq h(x)+2$ and
	\[
	x\prec z, \qquad x\parallel y, \qquad y\parallel z.
	\]
	For $i,j\in\mathbb{Z}$, let
	\[
	p(i,j) \quad := \quad
	\Pb[\cL(y)-\cL(x)=i,\ \cL(z)-\cL(y)=j].
	\]
	We also write
	\[
	p \ := \ \Pb[\cL(y) \. < \. \cL(x)],
	\qquad
	p' \ := \ \Pb[\cL(z) \. < \. \cL(y)],
	\qquad
	S \ := \ p+p',
	\]
	and set
	\[
	a_1 \ := \ p(1,1), \qquad
	a_2 \ := \ p(1,2)+p(2,1), \qquad
	a_3 \ := \ p(2,2).
	\]
	The inequalities in~\cite[(5.2), (5.5), (5.7)--(5.8)]{BFT95} give
	\begin{align}
		2a_1+a_2 &\ \leq \ S,
		& 2a_2+2a_3 &\ \leq \ S, \label{eq:BFT-D-linear}\\
		4a_1a_3 &\ \leq \ a_2^2,
		& 7 &\ \leq \ 9S+7a_1+3a_2+3a_3. \label{eq:BFT-D-quadratic}
	\end{align}
	In particular, the quadratic inequality in {\eqref{eq:BFT-D-quadratic}} follows from the special case of the cross-product inequality proved in~\cite[Thm.~3.2]{BFT95},
	and the general case of the inequality~\cite[Conj.~3.1]{BFT95} remains a challenging open problem.
	Let
	\begin{equation}\label{eq:BFT-D-slack}
		R \quad := \quad S-2a_1-a_2.
	\end{equation}
	By~\eqref{eq:BFT-D-linear}, we have \. $0\leq R\leq S$\..
	The next lemma shows that a lower bound for $R$ gives an improvement on the BFT bound.
	
	\smallskip
	
	\begin{lemma}[{\rm \defna{Case D of BFT}}{}]\label{lem:BFT-D}
		With the notation above, suppose additionally that  \. $\delta(P)\leq 0.2764$. Then
		\begin{equation}\label{eq:BFT-D-stability}
			\max\big\{\delta(P;x,y), \. \delta(P;y,z)\big\}
			\quad \geq \quad \BFT+\frac{R}{5+3\sqrt{5}}.
		\end{equation}
		Moreover,
		\begin{equation}\label{eq:BFT-D-nonadjacent}
			R \quad \geq \quad
			\Pb[\cL(x)-\cL(y)\geq2]
			\ + \ \Pb[\cL(y)-\cL(z)\geq2].
		\end{equation}
	\end{lemma}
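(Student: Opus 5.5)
We start with the second inequality \eqref{eq:BFT-D-nonadjacent}, which is a swapping argument. Since $x\parallel y$, transposing the adjacent elements $x,y$ in a linear extension is a bijection from $\{f(y)-f(x)=1\}$ to $\{f(x)-f(y)=1\}$, so these two events have equal probability. Likewise, $y\parallel z$ gives $\Pb[f(y)-f(z)=1]=\Pb[f(z)-f(y)=1]$. Split $p=\Pb[f(x)-f(y)=1]+\Pb[f(x)-f(y)\ge 2]$, and similarly for $p'$. Then
\[
S\;=\;\Pb[f(y)-f(x)=1]+\Pb[f(z)-f(y)=1]+\Pb[f(x)-f(y)\ge2]+\Pb[f(y)-f(z)\ge2].
\]
The event $\{f(y)-f(x)=1\}$ contains the disjoint events counted by $p(1,1)$ and $p(1,2)$. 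The event $\{f(z)-f(y)=1\}$ contains those counted by $p(1,1)$ and $p(2,1)$. Hence the first two terms sum to at least $2a_1+a_2$, and subtracting gives \eqref{eq:BFT-D-nonadjacent}. This also reproves the first inequality in \eqref{eq:BFT-D-linear}, with the slack identified.

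For \eqref{eq:BFT-D-stability}, we first identify the two balance constants. Since $h(x)\le h(y)$ and $h(y)\le h(z)$, Corollary~\ref{lem:comp} gives $p\le 0.2764$ and $p'\le0.2764$. Hence $\delta(P;x,y)=p$ and $\delta(P;y,z)=p'$, so the left side of \eqref{eq:BFT-D-stability} is at least $\max\{p,p'\}\ge S/2$. It therefore suffices to prove the linear bound
\[
S\;\ge\;\frac{5-\sqrt5}{5}+\frac{2R}{5+3\sqrt5}
\]
under the constraints \eqref{eq:BFT-D-linear}--\eqref{eq:BFT-D-quadratic}, with $a_i\ge0$.

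This is a finite-dimensional optimization problem. We eliminate $a_1=(S-a_2-R)/2$ and use $a_3\le\min\{S/2-a_2,\ a_2^2/(4a_1)\}$ to bound the right-hand side of $7\le 9S+7a_1+3a_2+3a_3$. This gives, for each $R$, an inequality $7\le \Phi(S,a_2,R)$. The function $\Phi$ is piecewise given by the linear bound or the quadratic-over-linear bound on $a_3$. We then maximize $\Phi$ over $a_2$ in each regime: either the two bounds on $a_3$ coincide, or an endpoint such as $a_2=0$ or $a_1=0$ applies. This reduces the problem to checking that $7\le\max_{a_2}\Phi(S,a_2,R)$ forces the displayed lower bound on $S$.

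At $R=0$ this should recover exactly the BFT computation, with optimum $S=2\BFT$. Indeed, $\BFT$ is a root of $5t^2-5t+1=0$, which is where $\sqrt5$ enters. So the remaining work is to show that the minimal feasible $S$, as a function of $R$, grows at least with slope $2/(5+3\sqrt5)$. We plan to do this by showing that the increasing factor $7a_1=7(S-a_2-R)/2$ costs $S$ at this rate near the optimum. We then argue that the minimal $S$ is convex, or at least lies above its tangent line at $R=0$, on the relevant range $0\le R\le S\le 0.5528$. The last inequality holds because $p,p'\le0.2764$.

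The main obstacle is this last calculus step: handling the case split between the two active bounds on $a_3$ cleanly, and confirming that the tangent-line slope at $R=0$ equals $2/(5+3\sqrt5)$ and is not beaten elsewhere. If a direct convexity argument is messy, we would instead verify the linear bound separately in each regime. We would compute the critical $a_2$ explicitly and compare polynomials, using the crude constraint $S\le 0.5528$ to exclude spurious branches.
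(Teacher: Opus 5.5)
\textbf{Verdict.} Your argument for \eqref{eq:BFT-D-nonadjacent} is correct. It swaps in the opposite direction from the paper, which rewrites $2a_1+a_2=p(-1,2)+p(-1,3)+p(2,-1)+p(3,-1)$, but it is the same idea. Your reduction of \eqref{eq:BFT-D-stability} to $S\ge 2\BFT+\frac{2R}{5+3\sqrt5}$ via Corollary~\ref{lem:comp} is also correct. The gap is in the optimization step, where the mechanism you propose aims at a false statement.

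\textbf{Why the tangent-line step fails.} By homogeneity, for given $(S,R)$ with $0\le R\le S$, the exact maximum of $7a_1+3a_2+3a_3$ under the constraints is $S\,F(1-R/S)$, where $F(s)=\frac72\sqrt{s^2+\frac14}-\frac14$ on $[0,1]$. So the remaining constraint is
$\phi(S,R):=\frac{35}{4}S+\frac72\sqrt{(S-R)^2+S^2/4}\ \ge\ 7$.
The function $\phi$ is convex and increasing in $S$. Hence the region $\{(S,R):0\le R\le S\le S_{\min}(R)\}=\{\phi\le 7\}\cap\{0\le R\le S\}$ is convex. This means the minimal feasible $S$ is a \emph{concave} function of $R$, not a convex one. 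Its slope at $R=0$ is $-\phi_R/\phi_S=(\sqrt5-1)/5\approx 0.247$, not $2/(5+3\sqrt5)=(3\sqrt5-5)/10\approx 0.171$. Consequently $S_{\min}(R)$ lies strictly \emph{below} its tangent line at $R=0$ for every $R>0$. Neither ``$S_{\min}$ is convex'' nor ``$S_{\min}$ lies above its tangent'' can be proved, and the tangent slope you expect to confirm is wrong.

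\textbf{Where the constant comes from.} The constant $2/(5+3\sqrt5)$ is the slope of the \emph{chord} from $(R,S)=(0,2\BFT)$ to the opposite extreme $R=S$. At that extreme $a_1=a_2=0$, and $7\le 9S+\frac32 S$ forces $S=\frac23$. A concave function lies above its chords, and that is exactly the inequality needed.

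\textbf{How the paper does it.} It sets $b_i=a_i/S$ and $s=1-R/S$, and proves $7b_1+3b_2+3b_3\le F(s)$ by the one-variable optimization you sketch, now in normalized variables. It then uses that $F$ is convex on $[0,1]$ with $F(0)=\frac32$ and $F(1)=c:=\frac{7\sqrt5-1}{4}$, and replaces $F$ by its chord. Multiplying by $S$ and using $S(1-s)=R$ gives $7\le(9+c)S-(c-\frac32)R$, which is precisely the claimed bound.

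\textbf{About your fallback.} Checking the linear bound regime by regime could in principle be completed, since the inequality is true. As written, however, the decisive step rests on a false convexity/tangent claim. The missing idea is to normalize by $S$ and bound by the chord rather than the tangent.
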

	
	\smallskip
	
	In~\cite[\S5]{BFT95}, only the weaker bound obtained by dropping the term involving $R$ was used, whereas we will need to  use the full strength of Lemma~\ref{lem:BFT-D} in the proof of Theorem~\ref{thm:ACPP}.
	The proof of this  lemma closely follows the argument in~\cite[\S5]{BFT95}.
	
	\smallskip
	
	\begin{proof}
		First note that Corollary~\ref{lem:comp} implies \. $p,p'\leq 0.2764<1/2$\..
		Since $x\parallel y$ and $y\parallel z$, we have $S>0$.
		Set
		\[
		b_i \ := \ \frac{a_i}{S} \quad (i=1,2,3),
		\qquad
		s \ := \ 1-\frac{R}{S}.
		\]
		Then $s\in[0,1]$, and~\eqref{eq:BFT-D-linear}--\eqref{eq:BFT-D-slack} imply
		\begin{equation}\label{eq:BFT-D-normalized}
			2b_1+b_2 \ = \ s, \qquad
			b_2+b_3 \ \leq \ \frac{1}{2}, \qquad
			4b_1b_3 \ \leq \ b_2^2.
		\end{equation}

		Let $F(s)$ (for $s \in [0,1]$) be given by
		\[
		F(s) \quad := \quad
		\max\left\{\frac{7}{2}s,\,
		\frac{7}{2}\sqrt{s^2+\frac{1}{4}}-\frac{1}{4}\right\}.
		\]
		
		\begin{claim}
			We have
			\begin{equation}\label{eq:BFT-D-objective}
				7b_1+3b_2+3b_3 \quad \leq \quad F(s).
			\end{equation}
		\end{claim}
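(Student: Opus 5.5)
The plan is to treat \eqref{eq:BFT-D-objective} as a small optimization problem in $(b_1,b_2,b_3)$ with $s$ fixed. We maximize $7b_1+3b_2+3b_3$ over the region cut out by \eqref{eq:BFT-D-normalized}, together with $b_i\ge 0$; nonnegativity holds because the $a_i$ are probabilities and $S>0$. The maximum should come out to exactly the two expressions in $F(s)$.

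First eliminate $b_1$ using the equality $2b_1+b_2=s$, i.e.\ $b_1=(s-b_2)/2$ with $0\le b_2\le s$. The objective becomes
\[
7b_1+3b_2+3b_3 \;=\; \tfrac72 s-\tfrac12 b_2+3b_3 .
\]
For fixed $b_2<s$ the constraints on $b_3$ are
\[
b_3\le \tfrac12-b_2 \qquad\text{and}\qquad b_3\le \frac{b_2^2}{2(s-b_2)} ,
\]
the second coming from $4b_1b_3\le b_2^2$. So it suffices to bound $G(b_2):=\tfrac72 s-\tfrac12 b_2+3\min\{\tfrac12-b_2,\ b_2^2/(2(s-b_2))\}$ over $b_2\in[0,s]$. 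The degenerate cases are checked directly:
\begin{itemize}
\item If $b_2=s$, so $b_1=0$, only the linear constraint remains. The value is at most $\tfrac72 s-\tfrac12 s+3(\tfrac12-s)=\tfrac32$, which is at most $\tfrac72\sqrt{s^2+\tfrac14}-\tfrac14$.
\item If $s=0$, everything forces the value to be at most $\tfrac32=F(0)$.
\end{itemize}

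Next split according to which term of the minimum is active. The two curves cross where $(1-2b_2)(s-b_2)=b_2^2$, i.e.\ $b_2^2-(1+2s)b_2+s=0$. The relevant root is
\[
\beta:=\tfrac12\bigl((1+2s)-\sqrt{1+4s^2}\bigr)\in[0,s].
\]
For $b_2\le\beta$ the quadratic bound is the smaller one. There $G(b_2)=\tfrac72 s-\tfrac12 b_2+\tfrac32\, b_2^2/(s-b_2)$, which is convex in $b_2$ because $b_2^2/(s-b_2)$ is convex on $[0,s)$. Hence $G$ is maximized at an endpoint: $b_2=0$ gives $\tfrac72 s$, and $b_2=\beta$ gives the crossing value. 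For $b_2\ge\beta$ the linear bound is active, and $G(b_2)=\tfrac72 s+\tfrac32-\tfrac72 b_2$ is decreasing, so again the maximum is at $b_2=\beta$.

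Finally evaluate at the crossing:
\[
\tfrac72 s+\tfrac32-\tfrac74(1+2s)+\tfrac74\sqrt{1+4s^2}\;=\;\tfrac72\sqrt{s^2+\tfrac14}-\tfrac14 ,
\]
which is exactly the second term of $F(s)$. Combined with the endpoint value $\tfrac72 s$, this gives \eqref{eq:BFT-D-objective}.

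The main obstacle I expect is bookkeeping rather than an idea. One must check that $\beta$ really lies in $[0,s]$ and is the smaller root, which follows from $\sqrt{1+4s^2}\ge 1$ and $\sqrt{1+4s^2}\ge 1-2s+\dots$ type comparisons. One must also handle the boundary cases $b_1=0$ and $s=0$ cleanly, since there the quadratic constraint degenerates and the convexity argument does not apply verbatim.
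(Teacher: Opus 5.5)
Your proof is correct and is essentially the paper's argument after the affine change of variables $b_2=s-2b_1$. The paper instead uses $u=b_1$ and bounds $b_2+b_3\le\min\{\tfrac12,\,s^2/(4u)-u\}$, splits at $u_0=(\sqrt{1+4s^2}-1)/4=(s-\beta)/2$, and obtains the same increasing piece and the same convex piece $4u+3s^2/(4u)$ with the same endpoint values $\tfrac72 s$ and $\tfrac72\sqrt{s^2+\tfrac14}-\tfrac14$; the inequality you actually need for $\beta\ge 0$ is simply $\sqrt{1+4s^2}\le 1+2s$.
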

		
		\begin{proof}
			If $b_1=0$, then the left side of~\eqref{eq:BFT-D-objective} is at most $3/2$, which is at most $F(s)$.
			When $b_1=0$, the constraint \. $b_2+b_3\leq1/2$ \. gives
			\[
			7b_1+3b_2+3b_3
			\quad = \quad 3(b_2+b_3)
			\quad \leq \quad \frac{3}{2}.
			\]
			On the other hand, by the definition of $F(s)$,
			\[
			F(s)
			\quad \geq \quad
			\frac{7}{2}\sqrt{s^2+\frac{1}{4}}-\frac{1}{4}
			\quad \geq \quad
			\frac{7}{2}\cdot\frac{1}{2}-\frac{1}{4}
			\quad = \quad \frac{3}{2},
			\]
			where the second inequality uses $s^2\geq0$.
			Combining these bounds proves~\eqref{eq:BFT-D-objective} when $b_1=0$.
			
			Otherwise, write $u:=b_1>0$.
			By~\eqref{eq:BFT-D-normalized}, we have $u\leq s/2$ and
			\[
			b_2+b_3
			\quad \leq \quad b_2+\frac{b_2^2}{4u}
			\quad = \quad \frac{s^2}{4u}-u.
			\]
			It follows that
			\[
			7b_1+3b_2+3b_3
			\quad \leq \quad
			7u+3\min\left\{\frac{1}{2},\,\frac{s^2}{4u}-u\right\}.
			\]
			The two expressions inside the minimum are equal when $u$ is equal to
			\[
			u_0 \quad := \quad \frac{\sqrt{1+4s^2}-1}{4}.
			\]
			For $0<u\leq u_0$, the right side is $7u+3/2$, which is increasing in $u$.
			For $u_0\leq u\leq s/2$, it is $4u+3s^2/(4u)$, which is convex in $u$ and hence is maximized at an endpoint.
			Evaluating at \. $u=u_0$ \. and \. $u=s/2$ \. proves~\eqref{eq:BFT-D-objective}.
		\end{proof}

		Now, note that 
		the function $F$ is convex on $[0,1]$, with
		\[
		F(0) \ = \ \frac{3}{2},
		\qquad
		F(1) \ = \ \frac{7\sqrt{5}-1}{4} \ =: \ c.
		\]
		Thus $F$ lies below the chord joining these two endpoint values, so
		\[
		F(s) \quad \leq \quad
		c-\left(c-\frac{3}{2}\right)(1-s).
		\]
		Combining this with~\eqref{eq:BFT-D-quadratic} and~\eqref{eq:BFT-D-objective}, we obtain
		\begin{align*}
			7
			&\ \leq \ 9S+S(7b_1+3b_2+3b_3) \ \leq \ S\bigl(9+F(s)\bigr) \\
			&\ \leq \ (9+c)S-\left(c-\frac{3}{2}\right)R.
		\end{align*}
		Since
		\[
		\frac{7}{2(9+c)} \ = \ \BFT,
		\qquad
		\frac{c-3/2}{2(9+c)} \ = \ \frac{1}{5+3\sqrt{5}},
		\]
		it follows that
		\[
		\frac{S}{2} \quad \geq \quad \BFT+\frac{R}{5+3\sqrt{5}}.
		\]
		The assumption that $p,p'\leq1/2$ gives
		\[
		\max\big\{\delta(P;x,y), \. \delta(P;y,z)\big\}
		\quad = \quad \max\{p,p'\}
		\quad \geq \quad \frac{S}{2},
		\]
		proving~\eqref{eq:BFT-D-stability}.
		
		To prove~\eqref{eq:BFT-D-nonadjacent}, note that swapping adjacent incomparable elements gives
		\[
		p(-1,j)=p(1,j-1), \qquad
		p(j,-1)=p(j-1,1) \qquad (j\geq2).
		\]
		Consequently,
		\[
		2a_1+a_2
		\quad = \quad p(-1,2)+p(-1,3)+p(2,-1)+p(3,-1).
		\]
		The four events on the right are disjoint and are contained in the two  events defining $S$ (i.e. \. $\{\cL(x)-\cL(y)\geq 1\}$ \. and \. $\{\cL(y)-\cL(z)\geq 1\}$\.).
		These two  events are themselves disjoint, since $x\prec z$.
		Subtracting the four events from these two events {immediately implies~}\eqref{eq:BFT-D-nonadjacent}.
	\end{proof}
	
	\medskip

	\section{Posets of small range}\label{s:small-range}
	
	Let $D\geq 2$ throughout this section.
	In this section, we start using the assumption \. $\pi({P})\leq D$ \. from Theorem~\ref{thm:ACPP}.
	Recall that the \defnb{range} $\pi(P)$ is the
	maximum, over $x\in X$, of the number of elements incomparable with $x$.
	
	We begin with the following simple lemma.
	Let $S\subseteq X$ be a subset.
	We say that an event $E$ is \defnb{$S$-dependent} if it is specified by a fixed collection of comparisons among elements of $S$, that is,
	\[
	E  \ = \  \bigcap_{i=1}^{k} \{ \cL(u_i) < \cL(v_i) \},
	\]
	where \. $u_i,v_i \in S$ \. for all \. $i \in [k]$\..
	Write  
	\[q_0 \, := \, 1, \qquad q_m \, := \,  (D+1)^{-m(D+1)} \quad \text{for} \ \ m\geq 1,\]
	and note that 
	\[\eta_D  \ = \  \frac{(D+1)^{-4(D+1)}}{4096} \ = \  \frac{(q_2)^2}{4096}\,.    
	\] 
	
	\smallskip

	\begin{lemma}\label{lem:feasible}
		{Let ${P}$ be a finite poset} satisfying $\pi({P})\leq D$, and let $S \subseteq X$.
		Let $E$ be an $S$-dependent event with $\Pb[E]>0$. Then
		\[
		\Pb[E] \quad \geq \quad q_m,
		\]
		where $m:=|S|$.
	\end{lemma}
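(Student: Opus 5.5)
We prove the bound by induction on $m=|S|$, inserting the elements of $S$ one at a time. The idea is to compare $\Pb[E]$ with the probability of a fixed relative order on $S$, and to lower-bound the latter by an injective-type counting argument that uses only $\pi(P)\le D$.

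\emph{Reduction to a single total order.} Since $E$ is specified by comparisons among elements of $S$, it is a union of events of the form $E_\sigma=\{\cL \text{ induces the total order } \sigma \text{ on } S\}$. Because $\Pb[E]>0$, at least one such $\sigma$ has $\Pb[E_\sigma]>0$. It therefore suffices to show that every total order $\sigma$ on $S$ with $\Pb[E_\sigma]>0$ satisfies $\Pb[E_\sigma]\ge q_m$.

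\emph{Inductive step.} Write $S=S'\cup\{s\}$ and let $\sigma'$ be the restriction of $\sigma$ to $S'$. By induction $\Pb[E_{\sigma'}]\ge q_{m-1}$ (with $q_0=1$ covering the base case), so it suffices to show
\[
\Pb[E_\sigma\mid E_{\sigma'}] \ \ge \ (D+1)^{-(D+1)}.
\]
The position of $s$ relative to $S'$ is determined by comparisons with elements of $S'$ incomparable to $s$. There are at most $D$ such elements, so at most $D+1$ possible "slots" for $s$; all other elements of $S'$ are forced above or below $s$ by $\prec$.

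\emph{Moving $s$ into the required slot.} Given a linear extension in $E_{\sigma'}$ in which $s$ sits in the wrong slot, we move $s$ into the required slot with a local modification: remove $s$ from the sequence $\cL^{-1}(1),\dots,\cL^{-1}(n)$ and reinsert it. The new position must lie in the interval between its predecessors and successors in $P$, and the region it travels through consists only of elements incomparable to $s$ together with elements that are not obstructed. Because some extension achieves $\sigma$, the target slot is feasible, that is, no relation forbids it. The issue is that elements comparable to $s$ may lie between the current position and the target. To handle this we instead use a more careful map: we reorder only the block of elements incomparable to $s$ together with $s$. Concretely, we take the set $I$ of at most $D$ elements incomparable to $s$ plus $s$ itself, and "sort" their positions. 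The map records the positions of the $\le D+1$ elements of $I\cup\{s\}$ and rearranges the assignment of these elements to those positions so that the order on $S$ becomes $\sigma$ while preserving order relations. The preimage multiplicity of this map is at most $(D+1)!\le(D+1)^{D+1}$, which gives the conditional bound.

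\emph{Main obstacle.} The hard part is showing that a valid rearrangement within those fixed positions always exists. The elements of $I$ can be comparable to one another and to elements that sit between them, so reassigning positions may violate $\prec$ relations with outside elements. If the direct permutation argument fails, I would instead move $s$ step by step via adjacent transpositions. At each step $s$ swaps with an incomparable neighbor; if the neighbor is comparable, I first move that blocking element past $s$'s incomparable set, in a bounded number of moves. I would then bound the total blowup by $(D+1)^{D+1}$ by counting the number of choices needed to invert the operation.
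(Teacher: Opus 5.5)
Your reduction to a single total order $\sigma$ on $S$ and the induction on $m$ are fine as bookkeeping. Note that conditioning on $E_{\sigma'}$ gives the uniform measure on linear extensions of $Q'$, the poset $P$ with $\sigma'$ adjoined, and $\pi(Q')\le \pi(P)\le D$ since adding relations only removes incomparabilities. But the inequality $\Pb[E_\sigma\mid E_{\sigma'}]\ge (D+1)^{-(D+1)}$ is the entire content of the lemma, and you do not prove it.

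The fixed-position rearrangement genuinely fails. Take $X=\{s,z,w\}$ with the single relation $s\prec z$, the extension $(s,z,w)$, and the target order with $w$ before $s$. Here $I=\{w\}$, and the only other assignment of $\{s,w\}$ to positions $\{1,3\}$ is $(w,z,s)$, which violates $s\prec z$. Your fallback via adjacent transpositions is not quantified: neither the number of moves nor the multiplicity of the inverse map is bounded. So as written there is no argument for the key step.

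The missing idea is to replace local moves by counting fibres of a restriction map to the elements far from $S$. Let $T:=S\cup\{v: v\parallel s \text{ for some } s\in S\}$, so $|T|\le m(D+1)$, and let $U:=X\smallsetminus T$.

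\emph{Every fibre meets $E$.} Every $u\in U$ is comparable in $P$ to every element of $S$, so adjoining the comparisons of $E$ creates no new relation inside $U$. Indeed, a new relation $u\prec v$ would have to pass through some $s\in S$. The orientations of $u$--$s$ and $s$--$v$ are already fixed in $P$, which gives $u\prec_P s\prec_P v$. Hence every linear extension of $P[U]$ extends to a linear extension of $P$ lying in $E$, i.e.\ every fibre of the restriction map $\cE(P)\to\cE(P[U])$ meets $E$.

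\emph{Every fibre is small.} Each fibre has at most $(D+1)^{|T|}$ elements. An extension $g$ is determined by its restriction to $U$ together with the values $g(t)$ for $t\in T$. Each $g(t)$ lies in $[d(t)+1,\,d(t)+1+\pi(t)]$, where $d(t)$ is the number of elements below $t$, so it takes at most $D+1$ values.

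Together these give $\Pb[E]\ge (D+1)^{-|T|}\ge q_m$ directly, with no induction. The same argument run in $Q'$ with $T=\{s\}\cup\{v: v\parallel_{Q'} s\}$, using that every added comparison involves $s$, would also have closed your inductive step.
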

	
	\smallskip
	
	\begin{proof}
		Let $\cQ$ be obtained from ${P}$ by adjoining the comparisons specified by $E$ and taking the transitive closure.
		Since $\Pb[E]>0$, these comparisons are consistent with the partial order of ${P}$, so $\cQ$ is a well-defined poset.
		Set
		\[
		T \ := \ S\cup\{v\in X:\ v\parallel s\text{ for some }s\in S\},
		\qquad
		U \ := \ X\smallsetminus T.
		\]
		The assumption on the range gives
		\[
		|T| \quad \leq \quad \sum_{s\in S}\bigl(1+\pi(s)\bigr)
		\quad \leq \quad m(D+1).
		\]

		{We denote the strict partial orders of ${P}$ and $\cQ$ by $\prec_{{P}}$ and $\prec_{\cQ}$, respectively.}
		We first show that, for all \. $u,v\in U$ \.,
		\[
		u \. \prec_{{\cQ}} \.  v \quad \Longrightarrow \quad u \. \prec_P \. v.
		\]
		Indeed,
		if \. $u \. \prec_P \. v$\., there is nothing to prove.
		Otherwise, a path witnessing $u\prec_{{\cQ}} v$ must use an added
		comparison from $E$, so there exists $s\in S$ such that
		\[
		u \. \prec_{{\cQ}} \. s \. \prec_{{\cQ}}  \. v.
		\]
		By the definition of $U$, both $u$ and $v$ are comparable with $s$ in ${P}$.
		Since $\cQ$ extends ${P}$, these comparisons must have the same orientations in both posets.
		Hence
		\[
		u \. \prec_P \.  s \. \prec_P  \. v,
		\]
		which implies \.  $u \. \prec_P  \. v$, as desired.
		In particular, this implies that
		\begin{equation}\label{eq:qupu}
			\cQ[U] \quad = \quad {P}[U],
		\end{equation}
		where ${P}[U]$ denotes the subposet induced by $U$, with the order inherited from $\prec_{P}$.
		
		For a finite poset $R$, let $e(R)$ denote its number of linear extensions.
		It follows from \eqref{eq:qupu}  that
		\begin{equation}\label{eq:feasible-lower}
			e(\cQ) \quad \geq \quad e(\cQ[U])  \quad = \quad e({P}[U]).
		\end{equation}
		
		We next give an upper bound for $e(P)$.
		For $t\in T$, let
		\[
		d(t) \ := \ \bigl|\{v\in X:v\prec_P t\}\bigr|.
		\]
		In any linear extension $g$ of $P$, we have
		\[
		d(t)+1 \quad \leq \quad g(t)
		\quad \leq \quad d(t)+1+\pi(t).
		\]
		Thus $g(t)$ has at most \. $\pi(t)+1\leq D+1$ \. possible values.
		Once the relative order on $U$ is fixed, the positions $g(t)$
		for $t\in T$ determine the entire linear extension, as the elements
		of $U$ fill the remaining positions in their prescribed order.
		Consequently,
		\begin{equation}\label{eq:feasible-upper}
			e({P}) \quad \leq \quad e({P}[U])\,(D+1)^{|T|}.
		\end{equation}
		
		Combining~\eqref{eq:feasible-lower} and~\eqref{eq:feasible-upper}, we therefore obtain
		\[
		\Pb[E]
		\quad = \quad \frac{e(\cQ)}{e({P})}
		\quad \geq \quad (D+1)^{-|T|}
		\quad \geq \quad (D+1)^{-m(D+1)}
		\quad = \quad q_m,
		\]
		as desired.
	\end{proof}
	
	\smallskip
	
	We now combine Lemma~\ref{lem:feasible} with Lemma~\ref{lem:BFT-D}
	to describe the structure of BFT triples when $\delta({P})$ is close to $\BFT$.
	Let $G({P})$ denote the \defnb{incomparability graph} of ${P}$,
	whose vertex set is $X$ and whose edges join incomparable elements.
	An edge is a \defnb{bridge} if deleting it increases the number of connected components.
	For subsets $V,W\subseteq X$, we write \. $V\prec W$ \. if every element of $V$ precedes every element of $W$ in ${P}$.
	For two distinct elements $x, y \in X$,
	we say  that $y$ \defnb{covers} $x$, denoted by
	\. $x\lessdot y$\.,  if \. $x\prec y$ \. and no element $w\in X$ satisfies \. $x\prec w\prec y$\..
	{We use the same notation when either set consists of a single element, identifying that singleton with its element.}
	
	\smallskip
	
	\begin{lemma}[{\rm \defna{Structure of BFT triples}}{}]\label{lem:rigidity}
		Let ${P}$ be a finite poset satisfying
		\[
		\pi({P}) \ \leq \  D
		\qquad \text{ and } \qquad
		\delta({P}) \ \leq \ \BFT+\eta_D,
		\]
		and let $(x,y,z)$ be a BFT triple.
		Then the following statements hold:
		\begin{enumerate}[label=\textnormal{(\roman*)}]
			\item The triple $(x,y,z)$ is in Case~D;
			\item 	$z$ covers $x$;
			\item  $y$ is incomparable only with $x$ and $z$;
			\item The edges $xy$ and $yz$ are bridges of $G({P})$;
			\item 	Writing
			\[
			L \ := \ \{w\in X:w\prec y\},
			\qquad
			U \ := \ \{w\in X:y\prec w\},
			\]
			we have the following partition of the ground set
			\begin{equation}\label{eq:bridge-partition}
				X \quad = \quad L\,\sqcup \,\{x,y,z\}\, \sqcup \,U,
			\end{equation}
			and
			\begin{equation}\label{eq:bridge-order}
				L\prec z, \qquad x\prec U, \qquad L\prec U.
			\end{equation}
		\end{enumerate}
	\end{lemma}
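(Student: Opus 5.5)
The plan is to first pin down the case using the BFT analysis, and then use Lemma~\ref{lem:BFT-D} together with the discreteness from Lemma~\ref{lem:feasible} to rule out every configuration except the rigid one described. Throughout we use $\delta(P)\le \BFT+\eta_D<0.2764$.

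\emph{Step (i).} The BFT triple falls, up to duality and reversal, into one of Cases~A--D. We interpret the conclusions of the lemma in the corresponding normalized form; they are symmetric under these operations. In Cases~A and~B we have $\delta(P;x,y)\ge 1/3$, and in Case~C Lemma~\ref{lem:BFT-C} gives a pair with balance $>0.2764$. Both contradict the assumption, so the triple is in Case~D.

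\emph{Key quantitative step.} Lemma~\ref{lem:BFT-D} yields $R\le (5+3\sqrt5)\,\eta_D=(5+3\sqrt5)\,q_2^2/4096<q_4$, since $q_4=q_2^2$. By \eqref{eq:BFT-D-nonadjacent},
\[
\Pb[\cL(x)-\cL(y)\ge2]+\Pb[\cL(y)-\cL(z)\ge2] \ < \ q_4 .
\]
Hence, by Lemma~\ref{lem:feasible}, every $S$-dependent event with $|S|\le4$ that forces $\cL(x)-\cL(y)\ge2$ or $\cL(y)-\cL(z)\ge2$ has probability \emph{zero}, i.e.\ it is inconsistent with $P$. The remaining steps are all instances of this principle.

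\emph{Step (iii).} Suppose $v\notin\{x,y,z\}$ with $v\parallel y$. We will exhibit a consistent ordering such as $y<v<x$ or $z<v<y$, chosen according to the relation of $v$ to $x$ and $z$. The subcases are $v\prec x$, $x\prec v$, $v\parallel x$, and similarly for $z$, using $x\prec z$ to exclude impossible combinations. Any such ordering forces a gap $\ge2$ on a set of size $4$, which is a contradiction. This case check is where I expect the main obstacle: one must verify that for every configuration at least one of the two gap-forcing orderings is consistent. For instance, if $v\succ z$ then $z<v<y$ is consistent, and if $v\prec x$ then $y<v<x$ requires $y$ below $v$, which is fine since $v\parallel y$.

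\emph{Step (ii).} Suppose $x\prec w\prec z$. Then $w\parallel y$, since otherwise $y$ would be comparable to $x$ or $z$; this already contradicts (iii). Alternatively, the chain gives $\cL(z)-\cL(x)\ge2$ always, while $h(z)\le h(x)+2$. The event $x<y<z$ is consistent and forces $\cL(z)-\cL(x)\ge3$, which contradicts the height bound.

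\emph{Step (v).} By (iii), every $w\notin\{x,y,z\}$ is comparable to $y$, which gives the partition \eqref{eq:bridge-partition}. If $w\in L$ and $w\parallel z$, then $z<w<y$ is a consistent ordering on three elements forcing $\cL(y)-\cL(z)\ge2$, a contradiction. Also $z\prec w$ is impossible, since it would give $z\prec y$. Hence $L\prec z$, and dually $x\prec U$. Finally $L\prec U$ holds by transitivity through $y$.

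\emph{Step (iv).} By (ii), (iii) and (v), the only incomparabilities between $L\cup\{x\}$ and $\{z\}\cup U$ would be between $x$ and $z$, but $x\prec z$. Moreover, $y$ is adjacent only to $x$ and $z$. Deleting $xy$ therefore separates $L\cup\{x\}$ from $\{y,z\}\cup U$, and symmetrically for $yz$, so both edges are bridges.
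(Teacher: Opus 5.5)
Your strategy is the paper's: rule out Cases A--C, use Lemma~\ref{lem:BFT-D} to push $R$ below the bound of Lemma~\ref{lem:feasible}, then exclude every consistent three-element ordering that forces $\cL(x)-\cL(y)\geq2$ or $\cL(y)-\cL(z)\geq2$. Steps (i), (v), (iv) and your second (``Alternatively'') argument for (ii) are correct. The problem is the case check you postpone in Step (iii): it is false in one configuration.

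For $v\parallel y$, the ordering $y<v<x$ is consistent with $P$ exactly when $x\not\prec v$, and $z<v<y$ is consistent exactly when $v\not\prec z$. So if $x\prec v\prec z$, neither ordering is available. Using $x\prec z$ does not exclude this configuration. Nor can a four-element event help: every linear extension restricts on $\{x,y,z,v\}$ to $x<v<z$ with $y$ in one of four slots, and none of these patterns places an element strictly between $y$ and $x$ or between $z$ and $y$. Take the poset formed by the chain $x\prec v\prec z$ and an isolated $y$. Both gap-forcing probabilities vanish there, and the triple is excluded only because $h(z)-h(x)=5/2>2$. So the smallness of $R$ is not the relevant constraint in this case. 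What rules it out is the height condition, i.e.\ statement (ii).

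This makes your first argument for (ii) (``this already contradicts (iii)'') circular. The repair uses only what you already wrote. First prove (ii) by the expectation argument: $\cL(z)-\cL(x)\geq2$ always, and $\geq3$ on the positive-probability event $\{\cL(x)<\cL(y)<\cL(z)\}$. Then in (iii), if both orderings were inconsistent we would have $x\prec v\prec z$, contradicting (ii). This is the paper's order of argument. It first shows that every $w\notin\{x,y\}$ satisfies $w\prec y$ or $x\prec w$, and every $w\notin\{y,z\}$ satisfies $w\prec z$ or $y\prec w$. It then combines these two facts with (ii). A minor point: all the events you need live on three elements, so $R<q_3$ suffices; your bound $R<q_4$ is correct but not needed.
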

	
	\smallskip
	
	Intuitively, Lemma~\ref{lem:rigidity} states that, under its hypotheses, every BFT triple induces a copy of the  $(2+1)$ poset  and interacts with the rest of the poset in a highly constrained way.
	Here, \defnb{$(2+1)$-poset} is the disjoint union of a two-element chain
	and a singleton. Equivalently, it consists of three elements $x,y,z$
	with $x\prec z$ and $y$ incomparable to both $x$ and $z$.
	{Since the $(2+1)$ poset itself} has balance constant $1/3$, such triples are natural places to look for well-balanced pairs.

	\begin{proof}
		We start {by proving} (i).
		Since $D\geq2$, we have
		\[
		\delta({P})
		\quad \leq \quad \BFT+\eta_D
		\quad \leq \quad \BFT+\frac{3^{-12}}{4096}
		\quad < \quad 0.2764.
		\]
		Cases~A and~B, including their duals, give a pair with balance constant at least $1/3$, contradicting $\delta({P})<0.2764$.
		Case~C is excluded by Lemma~\ref{lem:BFT-C}.
		Thus the triple is in Case~D, proving~\textnormal{(i)}.
		
		To prove~\textnormal{(ii)}, suppose to the contrary that there exists $w$ satisfying $x\prec w\prec z$.
		Then every linear extension $g$ of $P$ satisfies
		\[
		g(z)-g(x) \quad \geq \quad 2.
		\]
		Since $x\parallel y$ and $y\parallel z$, the event
		\. $\{\cL(x)<\cL(y)<\cL(z)\}$ \. has positive probability (recall that $\cL$ is the uniform randon linear extension of $P$).
		On this event, both $w$ and $y$ lie between $x$ and $z$, so the gap is at least three.
		Taking expectations gives
		\[
		h(z)-h(x)
		\quad = \quad \Eb[\cL(z)-\cL(x)]
		\quad > \quad 2,
		\]
		contradicting the definition of a BFT triple.
		Thus \. $x\lessdot z$ \., proving~\textnormal{(ii)}.
		
		For~\textnormal{(iii)}, we first establish two consequences of the bound on $\delta({P})$.
		By Lemma~\ref{lem:BFT-D},
		\[
		\BFT+\frac{R}{5+3\sqrt{5}}
		\quad \leq \quad \delta({P})
		\quad \leq \quad \BFT+\eta_D.
		\]
		Consequently,
		\begin{equation}\label{eq:rigidity-small}
			R
			\quad \leq \quad (5+3\sqrt{5})\eta_D
			\quad = \quad
			\frac{5+3\sqrt{5}}{4096(D+1)^{D+1}}\,q_3
			\quad < \quad q_3.
		\end{equation}
		
		Suppose that some linear extension $g$ satisfies
		\. $g(y)<g(w)<g(x)$ \. for an element $w\in X\smallsetminus\{x,y\}$.
		Then the event
		\[
		E \ := \ \{\cL(y)<\cL(w)<\cL(x)\}
		\]
		is $\{x,y,w\}$-dependent and has positive probability.
		Lemma~\ref{lem:feasible} and~\eqref{eq:BFT-D-nonadjacent} give
		\[
		q_3
		\quad \leq \quad \Pb[E]
		\quad \leq \quad \Pb[\cL(x)-\cL(y)\geq2]
		\quad \leq \quad R,
		\]
		contradicting~\eqref{eq:rigidity-small}.
		Thus every linear extension $g$ satisfies
		\. $g(y)<g(x)$ \. only if \. $g(x)=g(y)+1$.
		The same argument applies to the pair $y,z$, so
		\begin{equation}\label{eq:bridge-adjacency}
			\begin{aligned}
				g(y)<g(x) &\quad \Longrightarrow \quad g(x)=g(y)+1,\\
				g(z)<g(y) &\quad \Longrightarrow \quad g(y)=g(z)+1.
			\end{aligned}
		\end{equation}
		
		The first condition in~\eqref{eq:bridge-adjacency} implies that,
		for every $w\in X\smallsetminus\{x,y\}$,
		\begin{equation}\label{eq:bridge-left}
			w\prec y \qquad \text{or} \qquad x\prec w.
		\end{equation}
		Indeed, if neither relation holds, then there  would  be a linear extension $g$  with
		$g(y)<g(w)<g(x)$, contradicting~\eqref{eq:bridge-adjacency}.
		Similarly, the second condition implies that,
		for every $w\in X\smallsetminus\{y,z\}$,
		\begin{equation}\label{eq:bridge-right}
			w\prec z \qquad \text{or} \qquad y\prec w.
		\end{equation}
		If an element $w\notin\{x,y,z\}$ were incomparable with $y$,
		then~\eqref{eq:bridge-left} and~\eqref{eq:bridge-right} would give
		\. $x\prec w\prec z$\..
		This contradicts~\textnormal{(ii)}.
		Thus $y$ is incomparable only with $x$ and $z$, proving~\textnormal{(iii)}.
		
		We now prove~\textnormal{(v)}.
		The partition in~\eqref{eq:bridge-partition} follows from~\textnormal{(iii)}.
		For $w\in L$, equation~\eqref{eq:bridge-right} gives $w\prec z$.
		For $w\in U$, equation~\eqref{eq:bridge-left} gives $x\prec w$.
		Also, $L\prec U$ follows by transitivity through $y$.
		This proves~\eqref{eq:bridge-order}, and hence~\textnormal{(v)}.
		
		Finally, these relations, together with $x\prec z$, give
		\[
		L\cup\{x\} \quad \prec \quad U\cup\{z\}.
		\]
		Hence there are no incomparability edges between these two sets.
		By~\textnormal{(iii)}, the only edges incident with $y$ are $xy$ and $yz$.
		Deleting $xy$ therefore separates $L\cup\{x\}$ from its complement,
		while deleting $yz$ separates $U\cup\{z\}$ from its complement.
		Thus both edges are bridges, proving~\textnormal{(iv)}.
	\end{proof}

	\medskip

	\section{Proof of Theorem~\ref{thm:ACPP}}\label{s:thm:ACPP}
	
	In this section, we prove Theorem~\ref{thm:ACPP}.
	Throughout this section, let ${P}$ be a finite nonchain poset satisfying \. $\pi({P})\leq D$\..
	We may assume without loss of generality that the incomparability graph $G({P})$ is connected.
	Indeed, the conclusion for ${P}$ follows from that for the subposet induced by any connected component with at least two vertices, since comparison probabilities within a component are unchanged.
	We will also assume that ${P}$ {has $n \geq 3$ elements}, as otherwise $\delta({P})=1/2$.

	\subsection{The strategy to choose the right BFT triple}\label{ss:choose-BFT}
	Our starting point is the approach introduced by  Kahn\textcolor{red}{--}Saks~\cite{KS84}, and refined by Brightwell--Felsner--Trotter~\cite{BFT95}.
	We list the elements of ${P}$ as $v_1,\ldots,v_n$ in nondecreasing order of height,
	\[
	h(v_1) \quad \leq \quad \cdots \quad \leq \quad h(v_n),
	\]
	breaking ties arbitrarily.
	The strategy of Brightwell--Felsner--Trotter is to find three consecutive elements $(v_k,v_{k+1},v_{k+2})$ that form a BFT triple.
	Applying the arguments outlined in Section~\ref{s:prelim-ACPP} to this triple then yields \. $\delta({P})\geq \BFT=\frac{5-\sqrt{5}}{10}$\..
	Our strategy builds on the BFT approach by choosing the BFT triple \emph{more carefully}.

	To see why the choice of triple matters, consider finite restrictions of the infinite \defnb{Fibonacci poset} { $\cF$ defined in Section~\ref{ss:intro-proof}}:
	Let $m$ be a positive even integer, and let $\cF_m$ be the poset with ground set $\{x_{-m},x_{-m+1},\ldots,x_{m-1},x_m\}$, with \. $x_i\prec x_j$ \. if and only if \. $j-i\geq2$\..
	The consecutive triples centered at odd indices are all BFT triples, but they need not contain equally well-balanced pairs.
	Indeed, writing $F_k$ for the Fibonacci numbers, with $F_0=0$ and $F_1=1$, a direct count gives
	\[
	\delta(\cF_m;x_i,x_{i+1})
	\quad = \quad
	\frac{F_{m+i+1}F_{m-i}}{F_{2m+2}}.
	\]
	It follows from direct calculation that, at the center ($i=0$),
	\[
	\delta(\cF_m;x_0,x_1)
	\quad \longrightarrow \quad
	\frac{5-\sqrt{5}}{10}
	\quad \approx \quad 0.276393,
	\]
	whereas
	at the boundary ($i=m-1$),
	\[
	\delta(\cF_m;x_{m-1},x_m)
	\quad \longrightarrow \quad
	\frac{3-\sqrt{5}}{2}
	\quad \approx \quad 0.381966.
	\]
	Thus, in this case, the best-balanced pairs lie near the boundary, while choosing pairs near the center is suboptimal.
	{As $m\to\infty$, the boundary recedes to infinity, while the balance of every adjacent pair at a fixed index converges to $\BFT$. The best-balanced pairs near the boundary are therefore lost in the limit.}
	This illustrates why the $1/3$--$2/3$ conjecture can fail for infinite posets and suggests how to improve the bound for finite posets:
	we must find a way to distinguish these better balanced pairs from the suboptimal ones.
	
	To identify promising BFT triples, we introduce the \defnb{displacement} of an element from its position in the height order.
	For each $i\in[n]$, let
	\[
	d_i \ := \ h(v_i)-i,
	\qquad
	M \ := \ \max_{i\in[n]}|d_i|.
	\]
	Note that in  the Fibonacci example, we have
	\[
	\big|h(x_i)-(m+i+1)\big|
	\quad = \quad
	\frac{F_{2|i|}}{F_{2m+2}}.
	\]
	{Thus the displacement at every fixed index tends to zero as $m\to\infty$}, while its maximum absolute value is attained at the boundary and remains bounded away from zero.
	This suggests that the displacement function may help locate the better-balanced pairs that are lost in the infinite-volume limit.
	{The next two lemmas make} this observation rigorous.

	\smallskip
	
	\begin{lemma}[{\rm \defna{Choosing a BFT triple}}{}]\label{lem:extremal-BFT}
		Suppose that $G({P})$ is connected and $n\geq3$.
		{Then there exists} $k\in[n-2]$ such that $(v_k,v_{k+1},v_{k+2})$ is a BFT triple and
		\[
		\max\big\{|d_k|,\,|d_{k+2}|\big\} \quad = \quad M.
		\]
	\end{lemma}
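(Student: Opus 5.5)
The plan is to rest everything on the standard partial-sum identity for heights, so that both the height gap and the choice of endpoint follow from the sign of the displacements. Write $D_j := \sum_{i\le j} d_i$. Since $\sum_{x} h(x) = \Eb\bigl[\sum_x \cL(x)\bigr] = n(n+1)/2$, we have $D_n = 0$. For $1\le j<n$ the set $\{v_1,\dots,v_j\}$ has total position $\sum_{i\le j}\cL(v_i)\ge j(j+1)/2$ in every linear extension, so $D_j\ge 0$. Equality would force these $j$ elements to occupy the first $j$ positions in every linear extension. That can only happen if every element of $\{v_1,\dots,v_j\}$ lies below every element of $\{v_{j+1},\dots,v_n\}$, which disconnects $G(P)$. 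Since $G(P)$ is connected, $D_j>0$ for all $1\le j<n$. In particular $d_1=D_1>0$, so $M>0$, and $d_n=-D_{n-1}<0$.

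Next I locate an index $i^*$ with $|d_{i^*}|=M$ and split on the sign of $d_{i^*}$. Suppose first that $d_{i^*}=M>0$. Then $i^*\neq n$ because $d_n<0$. Also $i^*\ne n-1$: otherwise $D_{n-2}=D_{n-1}-M=-d_n-M>0$, hence $|d_n|>M$, contradicting maximality. So $i^*\le n-2$ and I take $k:=i^*$. Since $d_{k+2}\le M=d_k$, we get $h(v_{k+2})-h(v_k)=2+d_{k+2}-d_k\le 2$, and the ordering gives $h(v_k)\le h(v_{k+1})\le h(v_{k+2})$. If instead $d_{i^*}=-M<0$, I pass to the dual poset. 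This reverses the height order and negates the displacements (relabel $v_i\mapsto v_{n+1-i}$), so the previous case yields $i^*\ge 3$. I then take $k:=i^*-2$, and $d_k\ge -M=d_{k+2}$ gives the same height bound. In both cases $\max\{|d_k|,|d_{k+2}|\}=M$.

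It remains to show that $(v_k,v_{k+1},v_{k+2})$ is not a chain; I expect this to be the only slightly delicate step. Suppose $v_k\prec v_{k+1}\prec v_{k+2}$. Then $\cL(v_{k+2})-\cL(v_k)\ge 2$ always, so $h(v_{k+2})-h(v_k)\le 2$ forces $\cL(v_{k+2})=\cL(v_k)+2$ in every linear extension. Hence no other element $w$ can ever sit between $v_k$ and $v_{k+2}$.

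I claim this makes $v_{k+1}$ comparable to every element. Suppose some $w\notin\{v_k,v_{k+1},v_{k+2}\}$ satisfies $w\parallel v_{k+1}$. Then $w\not\prec v_k$ and $v_{k+2}\not\prec w$, by transitivity through the chain. Consequently the poset obtained by adding the relations $v_k\prec w\prec v_{k+2}$ is consistent. Any linear extension of it places $w$ between $v_k$ and $v_{k+2}$, which is a contradiction. Thus $v_{k+1}$ is an isolated vertex of $G(P)$, contradicting connectivity since $n\ge3$. This completes the plan.
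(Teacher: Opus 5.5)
Your proof is correct and follows essentially the same route as the paper. Both arguments use boundary inequalities on the displacements to place an extremal index at the correct end, then the identity $h(v_{k+2})-h(v_k)=2+d_{k+2}-d_k$, and finally the observation that in a chain no element can ever sit between $v_k$ and $v_{k+2}$, which contradicts connectivity. The differences are only cosmetic. You use connectivity to make the partial sums $D_j$ strictly positive, which rules out $i^*=n-1$ outright; the paper uses only the weak inequalities such as $d_{n-1}+d_n\le 0$ and switches to $j=n$ with $d_n=-M$. You also show only that the middle element is isolated in $G(P)$, rather than all three.
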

	
	\smallskip
	
	\begin{proof}
		Since the positions in a linear extension are distinct integers in $[n]$, we have
		\[
		d_1\geq0, \qquad d_n\leq0,
		\qquad
		d_1+d_2\geq0, \qquad d_{n-1}+d_n\leq0.
		\]
		It follows that either $d_i=M$ for some $i\leq n-2$, or $d_j=-M$ for some $j\geq3$.
		Set $k=i$ in the first case and $k=j-2$ in the second.
		In either case,
		\[
		h(v_{k+2})-h(v_k)
		\quad = \quad 2+d_{k+2}-d_k
		\quad \leq \quad 2,
		\]
		and the displacement of one of the two endpoints has absolute value $M$.
		
		It remains to show that these three elements do not form a chain.
		Write \. $x:=v_k$, \. $y:=v_{k+1}$, \. and \. $z:=v_{k+2}$\..
		If they form a chain, then $x\prec y\prec z$, and every linear extension $g$ satisfies $g(z)-g(x)\geq2$.
		The bound on the mean span would therefore force $g(z)-g(x)=2$ in every linear extension.
		
		Suppose that an element $w\notin\{x,y,z\}$ is incomparable with at least one of $x,y,z$.
		Then neither $w\prec x$ nor $z\prec w$ can hold.
		So  there exists  a linear extension $g$ with $g(x)<g(w)<g(z)$.
		This implies that both $w$ and $y$ would lie between $x$ and $z$, giving $g(z)-g(x)\geq3$, a contradiction.
		Hence the triple has no incomparability edges either internally or to its complement.
		This contradicts the connectedness of $G({P})$ and proves the lemma.
	\end{proof}
	
	\subsection{A lower bound for the balance constant}\label{ss:balance-displacement}
	
	Fix the triple $(x,y,z)$ from Lemma~\ref{lem:extremal-BFT}.
	The next estimate uses the structural conclusions of Lemma~\ref{lem:rigidity} to obtain a lower bound for the balance of this triple in terms of $M$.
	Note that, to find the inequalities used in the proof of
	this lemma, we directed an AI assistant to
	explore the underlying algebraic optimization problem. This approach is similar in spirit to the treatment of Case~D
	in~\cite[\S5]{BFT95}. As in that work, all the required identities and
	inequalities are justified explicitly below.
	\smallskip
	
	\begin{lemma}\label{lem:balance-displacement}
		Let ${P}$ be a finite poset satisfying
		\[
		\pi({P}) \ \leq \  D
		\qquad \text{ and } \qquad
		\delta({P}) \ \leq \ \BFT+\eta_D,
		\]
		and let $(x,y,z)$ be a BFT triple  chosen as in Lemma~\ref{lem:extremal-BFT}.
		Then
		\begin{equation}\label{eq:balance-displacement}
			\max\big\{\delta({P};x,y),\,\delta({P};y,z)\big\} \quad \geq \quad \BFT+\frac{M^2}{441}.
		\end{equation}
	\end{lemma}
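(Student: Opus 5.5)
The plan is to exploit the rigid structure from Lemma~\ref{lem:rigidity} to reduce everything to a one-dimensional "gluing" computation, then feed the result into Lemma~\ref{lem:BFT-D}. By Lemma~\ref{lem:rigidity}, the chosen triple is in Case~D, $y$ is incomparable only with $x,z$, and
\[
L\cup\{x\} \ \prec \ U\cup\{z\}.
\]
Write $A:=L\cup\{x\}$ with $|A|=a+1$ and $B:=U\cup\{z\}$. Every linear extension of $P$ is obtained by choosing a linear extension of $P[A]$, then one of $P[B]$, and inserting $y$ in one of three ways:
\begin{itemize}
\item in the middle, which is always allowed;
\item before $x$, allowed only if $x$ is last in $A$;
\item after $z$, allowed only if $z$ is first in $B$.
\end{itemize}
So, with $\alpha_j:=\Pb_A[x \text{ is } j\text{-th from the top of } A]$ and $\beta_j:=\Pb_B[z \text{ is } j\text{-th from the bottom of } B]$, we get
\[
p=\frac{\alpha_1}{1+\alpha_1+\beta_1},\qquad p'=\frac{\beta_1}{1+\alpha_1+\beta_1}.
\]
Similarly, $a_1,a_2,a_3$ and hence $R$ are explicit rational functions of $\alpha_1,\alpha_2,\beta_1,\beta_2$. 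For instance,
\[
a_1=\frac{\alpha_1\beta_1}{1+\alpha_1+\beta_1},\qquad a_2=\frac{\alpha_2\beta_1+\alpha_1\beta_2}{1+\alpha_1+\beta_1}.
\]

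Second, I will identify the height order near the triple. All elements of $A$ occupy positions $\le a+2$, and all elements of $B$ occupy positions $\ge a+2$. Moreover,
\[
\sum_{w\in A}h(w)=\binom{a+2}{2}+p,\qquad h(y)=a+2-p+p'.
\]
From this I expect to conclude that $A$ is exactly $\{v_1,\dots,v_{a+1}\}$ (up to ties), so that $x=v_k$ with $k=a+1$. Then
\[
d_k=h(x)-(a+1)=p-\Eb_A\bigl[(a+1)-\mathrm{pos}_A(x)\bigr]\cdot\frac{1+\beta_1}{1+\alpha_1+\beta_1}.
\]
A symmetric formula holds for $d_{k+2}$ in terms of $p'$ and the position of $z$ in $B$. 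The point is that $|d_k|$ (or $|d_{k+2}|$) equal to $M$ forces the distribution of $x$'s position in $A$ (or of $z$'s in $B$) to deviate from the Fibonacci-like extremal profile, in which the displacement is $0$ to first order.

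Third, and this is the main obstacle, I must convert "$|d_k|=M$" into a quantitative gain. This can come either directly, from $\max\{p,p'\}$ exceeding $\BFT$, or through $R\gtrsim M^2$, which then feeds into \eqref{eq:BFT-D-stability}. Note that $R\ge$ the right side of \eqref{eq:BFT-D-nonadjacent} gives nothing here, since those events now have probability $0$. The gain must therefore come from the explicit formula for $R$. I would try the following:
\begin{itemize}
\item Bound the expected deficit $\Eb_A[(a+1)-\mathrm{pos}_A(x)]$ from above and below by $\alpha_1,\alpha_2$, using Stanley's log-concavity of the position distribution of $x$ within $A$. This gives $\alpha_{j+1}/\alpha_j\le \alpha_2/\alpha_1$, hence geometric tail control.
\item Use Corollary~\ref{lem:comp} to keep $p,p'\le 0.2764$.
\item Solve the resulting two-variable optimization: minimize $\max\{p,p'\}$ subject to the BFT constraints \eqref{eq:BFT-D-linear}--\eqref{eq:BFT-D-quadratic} together with $|d_k|\ge M$.
\end{itemize}
Near the extremum, where $\alpha_1=\beta_1$ and $\alpha_2/\alpha_1$ equals the golden-ratio value, the constraint should move the optimum quadratically. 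This quadratic shift is what produces the $M^2$ term, and the constant $1/441=1/21^2$ should come out of a crude Taylor/Cauchy--Schwarz bound in that step. If the direct route resists, I would fall back to proving $R\ge c\,M^2$ via the explicit $a_i$ formulas and then invoke Lemma~\ref{lem:BFT-D}.
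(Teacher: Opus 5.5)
\textbf{There is a genuine gap: the quantitative step that constitutes the lemma is missing.}

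Your setup is correct and matches the one the proof uses. This covers the gluing of $A=L\cup\{x\}$ and $B=U\cup\{z\}$ across $y$, the formulas $p=\alpha_1/Z$ and $p'=\beta_1/Z$ with $Z=1+\alpha_1+\beta_1$, the identification $x=v_{a+1}$, and $d_k=p-(1+\beta_1)t/Z$ with $t:=\Eb_A[(a+1)-\mathrm{pos}_A(x)]$.

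The gap is your third step, which you leave as a heuristic (``should move the optimum quadratically'', ``$1/441$ should come out of\dots''). The quantities you propose to optimize over do not control $d_k$. The slack $R$ and the constraints \eqref{eq:BFT-D-linear}--\eqref{eq:BFT-D-quadratic} involve only $\alpha_1,\alpha_2,\beta_1,\beta_2$, whereas $d_k$ depends on the full first moment $t$.

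Your fallback ``$R\ge cM^2$'' is false on its own: for the $(2+1)$ poset, $R=0$ but $M=1/3$. To rescue your route you would need two further steps, and neither is carried out:
\begin{itemize}
\item pin $(\alpha_1,\beta_1)$ near $(\rho,\rho)$, with $\rho=(\sqrt5-1)/2$, using $\max\{p,p'\}\approx\BFT$;
\item transfer the unweighted tail mass $RZ=\beta_1\sum_{j\ge3}\alpha_j+\alpha_1\sum_{j\ge3}\beta_j$ to the weighted tail $t-(1-\alpha_1)$, for example via Stanley's log-concavity with $\alpha_2/\alpha_1$ bounded away from $1$.
\end{itemize}

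\textbf{The missing idea.} Put your exact glued formulas directly into the BFT condition $h(z)-h(x)\le 2$. Let $u:=\Eb_B[\mathrm{pos}_B(z)-1]$. Then $h(x)=a+1+p-(1+\beta_1)t/Z$ and $h(z)=a+3+(1+\alpha_1)u/Z-p'$, so the condition reads
\[
(1+\beta_1)\,t+(1+\alpha_1)\,u \ \le \ \alpha_1+\beta_1 .
\]
The trivial bounds $t\ge 1-\alpha_1$ and $u\ge1-\beta_1$ give $v+2\alpha_1\beta_1\ge 2$, where $v:=\alpha_1+\beta_1$. Since $\alpha_1\beta_1\le v^2/4$, this forces $v\ge 2\rho$, i.e.\ $p+p'\ge 2\BFT$. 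No cross-product inequality, no Lemma~\ref{lem:BFT-D}, and no log-concavity are needed.

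This argument is also stable. Suppose $\max\{p,p'\}=\BFT+\ep$. Then:
\begin{itemize}
\item $v-2\rho\le 11\ep$;
\item $(\alpha_1-\beta_1)^2\le 50\ep$;
\item the two slacks $(1+\beta_1)(t-1+\alpha_1)/Z$ and $(1+\alpha_1)(u-1+\beta_1)/Z$ sum to at most $11\ep$.
\end{itemize}
Now $d_k$ equals $\bigl(\alpha_1-(1+\beta_1)(1-\alpha_1)\bigr)/Z$ minus the first slack, and that bracket vanishes at $(\rho,\rho)$ with bounded gradient; the same holds symmetrically for $d_{k+2}$. Hence $M\le 20\sqrt{\ep}+11\ep\le 21\sqrt{\ep}$, which is \eqref{eq:balance-displacement}.
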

	
	\smallskip
	
	\begin{proof}
		By Lemma~\ref{lem:rigidity}, the triple $(x,y,z)$ satisfies all the structural conclusions stated there.
		Let $L,U$ be as in Lemma~\ref{lem:rigidity}, and write
		\[
		L' \ := \ L\cup\{x\},
		\qquad
		U' \ := \ U\cup\{z\},
		\qquad
		m \ := \ |L'|.
		\]	
		Since $L\prec y\prec U$ and $x,y,z$ are consecutive in the height order, every element of $L$ occurs before $x$ in that order, and every element of $U$ occurs after $z$.
		Thus \. $x=v_m$, \. $y=v_{m+1}$, \. and \. $z=v_{m+2}$\..
		
		Let $\cL_{L'}$ and $\cL_{U'}$ be independent uniform linear extensions of ${P}[L']$ and ${P}[U']$, respectively.
		Set
		\[
		r \ := \ \Pb[\cL_{L'}(x)=m],
		\qquad
		s \ := \ \Pb[\cL_{U'}(z)=1],
		\qquad
		v \ := \ r+s,
		\qquad
		Z \ := \ 1+r+s.
		\]
		By~\eqref{eq:bridge-order} and $x\prec z$, we have
		\[
		L' \quad \prec \quad U'.
		\]
		Note that
		each pair of linear extensions of ${P}[L']$ and ${P}[U']$ gives exactly one extension of ${P}$ in which $x$ precedes $y$ and $y$ precedes $z$: concatenate the two extensions and insert $y$ between them.
		There is one additional extension with $y$ preceding $x$ precisely when $x$ is last in the left extension, and one with $z$ preceding $y$ precisely when $z$ is first in the right extension.
		These three possibilities exhaust the linear extensions of ${P}$.
		Consequently,
		\begin{equation}\label{eq:bridge-extension-count}
			e({P}) \quad = \quad e({P}[L'])e({P}[U'])Z,
		\end{equation}
		and
		\begin{equation}\label{eq:bridge-pair-probabilities}
			p \ := \ \Pb[\cL(y)<\cL(x)] \ = \ \frac{r}{Z},
			\qquad
			p' \ := \ \Pb[\cL(z)<\cL(y)] \ = \ \frac{s}{Z}.
		\end{equation}

		Since $r,s\in[0,1]$, both probabilities are at most $1/2$.
		It then follows that
		\begin{equation}\label{bridge:pp'}
			\delta({P};x,y) \ = \ p, \qquad \delta({P};y,z) \  = \ p'.
		\end{equation}

		Now, let
		\[
		t \ := \ \Eb[m-\cL_{L'}(x)],
		\qquad
		u \ := \ \Eb[\cL_{U'}(z)-1].
		\]
		These are the expected numbers of elements after $x$ in the  extension of $P[L']$ and before $z$ in the  extension of $P[U']$, respectively.
		We now compare these quantities with the corresponding expectations under a uniformly random linear extension $\cL$ of ${P}$.
		Let
		\begin{align*}
			\alpha &\ := \ \Eb\bigl[|\{w\in L:\cL(x)<\cL(w)\}|\bigr], \qquad \beta \ := \ \Eb\bigl[|\{w\in U:\cL(w)<\cL(z)\}|\bigr]
		\end{align*}
		be the expected numbers of elements of $L$ after $x$ and of $U$ before $z$, respectively.
		We now show that
		\[
		\alpha \ = \ \frac{(1+s)t}{Z},
		\qquad
		\beta \ = \ \frac{(1+r)u}{Z}.
		\]
		To prove the first identity, fix a linear extension $g_{L'}$ of $P[L']$.
		In every full extension of $P$ inducing $g_{L'}$, the number of elements of $L$ after $x$ is $m-g_{L'}(x)$.
		If $g_{L'}(x)<m$, then $y$ can be inserted between the two blocks, or immediately after $z$ when the  extension of $P[U']$ begins with $z$.
		These give \. $(1+s) \. e({P}[U'])$ \.  full extensions.
		If $g_{L'}(x)=m$, the number being counted is zero.
		Averaging over all full extensions therefore gives
		\begin{align*}
			\alpha
			&\ = \ \frac{e({P}[U'])}{e({P})}
			\bigg[
			(1+s)\sum_{g_{L'}(x)<m}\bigl(m-g_{L'}(x)\bigr)
			\. + \.
			(2+s)\underbrace{\sum_{g_{L'}(x)=m}\bigl(m-g_{L'}(x)\bigr)}_{=\,0}
			\bigg]\\
			&\ = \ \frac{(1+s)e({P}[U'])}{e({P})}
			\sum_{g_{L'}}\bigl(m-g_{L'}(x)\bigr)\\
			&\ = \ \frac{(1+s)e({P}[U'])}{e({P})} \, e({P}[L']) \, t
			\ = \ \frac{(1+s)t}{Z},
		\end{align*}
		where the sum runs over all linear extensions $g_{L'}$ of ${P}[L']$.
		The second identity is proved analogously.

		Counting the elements before each of $x,y,z$ now gives
		\begin{equation}\label{eq:bridge-height-identities}
			\begin{aligned}
				h(x) &\ = \ m+p-\alpha,\\
				h(y) &\ = \ m+1+p'-p,\\
				h(z) &\ = \ m+2+\beta-p'.
			\end{aligned}
		\end{equation}
		Since $h(z)-h(x)\leq2$, we obtain
		\begin{equation}\label{eq:bridge-mean-span}
			\alpha+\beta
			\quad \leq \quad p+p'
			\quad = \quad \frac{v}{1+v}.
		\end{equation}
		Also, whenever $x$ is not last in  $\cL_{L'}$, at least one element follows it.
		Thus $t\geq1-r$, and similarly $u\geq1-s$.
		It follows that
		\[
		\alpha+\beta
		\quad \geq \quad
		\frac{(1+s)(1-r)+(1+r)(1-s)}{Z}
		\quad = \quad \frac{2-2rs}{1+v}.
		\]
		Combining these inequalities gives
		\begin{equation}\label{eq:bridge-side-constraint}
			v+2rs \quad \geq \quad 2.
		\end{equation}
		
		Now, write
		\[
		a \ := \ \max\big\{\delta({P};x,y),\,\delta({P};y,z)\big\} \ = \  \max\{p,p'\},
		\]
		and
		\[
		\rho \ := \ \frac{\sqrt{5}-1}{2},
		\qquad
		V(a) \ := \ \frac{2a}{1-2a}.
		\]
		Now,
		since $v=r+s$, we have
		\[
		v^2-4rs
		\quad = \quad (r-s)^2
		\quad \geq \quad 0.
		\]
		Hence \. $rs\leq v^2/4$\..
		Together with~\eqref{eq:bridge-side-constraint}, this  gives $v\geq2\rho$.
		On the other hand,
		\[
		\frac{v}{1+v} \quad = \quad p+p' \quad \leq \quad 2a,
		\]
		so
		\[
		2\rho \quad \leq \quad v \quad \leq \quad V(a).
		\]
		As $V$ is increasing and $V(\BFT)=2\rho$, this implies $a\geq \BFT$.
		Set \. $\ep:=a-\BFT$\.   ({note that this  $\ep$ is local to this proof
			and is not to be confused with $\ep$ in Theorem~\ref{thm:main}).  Since $a\leq\delta({P})\leq\BFT+\eta_D$, we have $a<0.2764$ and $0\leq\ep\leq\eta_D<10^{-5}$.}
		For $\BFT\leq a\leq 0.2764$, we have
		\[
		V'(a) \ = \ \frac{2}{(1-2a)^2} \ \leq \  \frac{2}{(1-2\cdot 0.2764)^2} \  < \ 11,
		\qquad
		V(a) \ \leq \ V(0.2764) \ = \ \frac{691}{559} \ < \ 1.237.
		\]
		Hence
		\begin{equation}\label{eq:bridge-side-sum}
			0 \quad \leq \quad v-2\rho \quad \leq \quad 11\ep.
		\end{equation}
		Moreover, by~\eqref{eq:bridge-side-constraint},
		\[
		(r-s)^2
		\quad = \quad v^2-4rs
		\quad \leq \quad v^2+2v-4
		\quad = \quad (v-2\rho)(v+2\rho+2)
		\quad \leq \quad 50\ep.
		\]

		Together with~\eqref{eq:bridge-side-sum}, this yields
		\begin{equation}\label{eq:bridge-side-errors}
			\begin{split}
				\max\big\{|r-\rho|,\,|s-\rho|\big\} 	\quad &= \quad
				\max\big\{\big|\frac{v-2\rho+(r-s)}{2}\big|,\,\big|\frac{v-2\rho-(r-s)}{2}\big|\big\}\\
				\quad &\leq \quad \frac{|v-2\rho|+|r-s|}{2} \quad \leq \quad
				\frac{11\ep+\sqrt{50\ep}}{2}
				\quad \leq \quad 5\sqrt{\ep}.
			\end{split}
		\end{equation}

		%

		To control the remaining contributions to the heights, define
		\[
		\alpha_* \ := \ \alpha-\frac{(1+s)(1-r)}{Z},
		\qquad
		\beta_* \ := \ \beta-\frac{(1+r)(1-s)}{Z}.
		\]
		Both quantities are nonnegative, since $t\geq1-r$ and $u\geq1-s$.
		By~\eqref{eq:bridge-mean-span},
		\begin{align*}
			0 \ \leq \ \alpha_*+\beta_*
			&\ \leq \ \frac{v-2+2rs}{1+v}
			\ \leq \ \frac{v-2+v^2/2}{1+v}\\
			&\ = \ \frac{(v-2\rho)\bigl(1+(v+2\rho)/2\bigr)}{1+v}\\
			&\ \leq \ v-2\rho
			\ \leq \ 11\ep.
		\end{align*}
		Here the penultimate inequality uses $v\geq2\rho$.
		
		Since \. $x=v_m$ \. and \. $z=v_{m+2}$ \., the height identities~\eqref{eq:bridge-height-identities} give
		\begin{equation}\label{eq:bridge-endpoint-displacements}
			\begin{aligned}
				d_m
				&\ = \ \frac{r-(1+s)(1-r)}{Z}-\alpha_*,\\
				d_{m+2}
				&\ = \ \frac{(1+r)(1-s)-s}{Z}+\beta_*.
			\end{aligned}
		\end{equation}
		
		Now, let  \. $F(r,s):=r-(1+s)(1-r)$ \. and \. $G(r,s):=(1+r)(1-s)-s$\..
		Both functions  vanish at $r=s=\rho$.
		For $F$, the absolute values of its partial derivatives with respect to $r,s$ are at most $3,1$, respectively, on $[0,1]^2$; for $G$, they are at most $1,3$.
		Since $F(\rho,\rho)=0$, it then follows from the mean value theorem and the triangle inequality that
		\begin{align*}
			|F(r,s)|
			\	&\ \leq \ 3|r-\rho|+|s-\rho|
			\ \leq \ 3\cdot5\sqrt{\ep}+5\sqrt{\ep}
			\ = \ 20\sqrt{\ep},
		\end{align*}
		where the second inequality is due to \eqref{eq:bridge-side-errors}.
		The same argument gives
		\. $|G(r,s)| \ \leq  \ 20\sqrt{\ep}$\..
		%
		Also note that   $\alpha_*,\beta_*\geq0$ and \. $\alpha_*+\beta_*\leq 11\ep$\., so  each is at most $11\ep$.
		Applying these inequalities to \eqref{eq:bridge-endpoint-displacements}  and using $Z\geq1$,
		\begin{align*}
			|d_m|
			&\ \leq \ \frac{|r-(1+s)(1-r)|}{Z}+\alpha_*
			\ \leq \ 20\sqrt{\ep}+11\ep,\\
			|d_{m+2}|
			&\ \leq \ \frac{|(1+r)(1-s)-s|}{Z}+\beta_*
			\ \leq \ 20\sqrt{\ep}+11\ep.
		\end{align*}
		%
		%
		%
		%
		By our choice of the triple in Lemma~\ref{lem:extremal-BFT},
		\[
		M \quad = \quad \max\big\{|d_m|,\,|d_{m+2}|\big\}
		\quad \leq \quad 21\sqrt{\ep}.
		\]
		Therefore
		\[
		\max\big\{\delta({P};x,y),\,\delta({P};y,z)\big\}  \quad = \quad a \quad = \quad \BFT+\ep
		\quad \geq \quad \BFT+\frac{M^2}{441},
		\]
		as desired.
	\end{proof}
	
	\subsection{Completion of the proof}\label{ss:complete-small-range}


	Since $G({P})$ is connected, some element $v_j$ is incomparable with $v_1$.
	The event \. $\cL(v_j)<\cL(v_1)$ \. has positive probability and involves only two elements.
	Lemma~\ref{lem:feasible} therefore gives
	\begin{equation}\label{eq:displacement-lower}
		M \quad \geq \quad d_1
		\quad = \quad \sum_{j=2}^{n}\Pb[\cL(v_j)<\cL(v_1)]
		\quad \geq \quad q_2.
	\end{equation}
	
	Recall that we assume
	\. $\delta({P}) \  \leq \ \BFT+\eta_D$, as otherwise the result is immediate.
	Choose $(x,y,z)$ as in Lemma~\ref{lem:extremal-BFT}.
	Lemma~\ref{lem:balance-displacement}
	and~\eqref{eq:displacement-lower} therefore give
	\begin{align*}
		{\delta({P})}
		&{\ \geq\ \max\big\{\delta({P};x,y),\,\delta({P};y,z)\big\}}\\
		&{\ \geq\ \BFT+\frac{M^2}{441}
			\ \geq\ \BFT+\frac{q_2^2}{441}\ >\ \BFT+\eta_D,}
	\end{align*}
	which proves the theorem. \qed
	
	
	\medskip

	\section{Preliminaries for  Theorem~\ref{thm:Aires}}\label{s:prelim-Aires}
	
	In this section we review results in the literature that will be used in the proof of Theorem~\ref{thm:Aires}.
	
	\subsection{The order polytope and log-concave random variables}\label{ss:width-probability}
	
	Let ${P}=(X,\prec)$ be a finite poset with $n$ elements.
	The \defnb{order polytope} of ${P}$ is
	\[
	\cO({P}) \quad := \quad
	\bigl\{(z_x)_{x\in X}\in[0,1]^X:
	z_x\leq z_y \text{ whenever } x\prec y\bigr\}.
	\]
	Let \. $\mathbf{F}=(F_x)_{x\in X}$ \. be chosen uniformly from $\cO({P})$.
	The random point $\mathbf{F}$ can be coupled  with a uniformly random linear extension $\cL$ of ${P}$
	by requiring that, almost surely,
	\[
	F_x < F_y
	\quad \Longleftrightarrow \quad
	\cL(x) < \cL(y)
	\qquad (x,y\in X).
	\]
	Note that
	the induced linear extension is uniform because the simplices corresponding
	to the linear extensions of ${P}$ have equal volume.
	In particular, the comparisons among the coordinates of $\mathbf{F}$ have the same probabilities as the corresponding comparisons in $\cL$.
	Writing
	\[
	Z_x \ := \ (n+1)F_x,
	\qquad
	\Delta(x,y) \ := \ |h(x)-h(y)|,
	\qquad
	d(x,y) \ := \  \sqrt{\operatorname{Var}(Z_x-Z_y)},
	\]
	we have \. $\Eb[Z_x]=\Eb[\cL(x)]=h(x)$\.,
	the height function of  ${P}$.
	We will use the following folklore result that is a consequence of the Brunn--Minkowski inequality, and for which the earliest written
	reference we have found is~\cite[Cor.~5.1(a)]{AK25a}.
	
	\smallskip
	
	\begin{lemma}[{\rm \defna{Log-concave density}~{\cite[Cor.~5.1(a)]{AK25a}}}{}]\label{lem:lc-density}
		For distinct $x,y \in X$,
		the random variable $Z_x-Z_y$  has a nondegenerate log-concave density.
	\end{lemma}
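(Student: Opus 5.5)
The plan is to realize $Z_x-Z_y$ as a linear image of the uniform measure on the convex body $\cO(P)$ and then apply the Brunn--Minkowski inequality through Prékopa's marginal theorem. Concretely, the map $\phi\colon \mathbb{R}^X\to\mathbb{R}$, $\phi(\mathbf z)=(n+1)(z_x-z_y)$, is linear, and $\mathbf F$ has density $\mathbf 1_{\cO(P)}/\mathrm{vol}(\cO(P))$. This density is log-concave, being the indicator of a convex set; $\cO(P)$ is an intersection of half-spaces.

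\textbf{Log-concavity.} Let $W:=Z_x-Z_y=\phi(\mathbf F)$. First I will compute its density at $t\in\mathbb{R}$:
\[
g(t) \ = \ \frac{\mathrm{vol}_{n-1}\bigl(\cO(P)\cap \phi^{-1}(t)\bigr)}{\|\nabla\phi\|\,\mathrm{vol}_n(\cO(P))}.
\]
Up to a constant, this is the $(n-1)$-dimensional volume of parallel hyperplane sections of a convex body. By Brunn's principle (Brunn--Minkowski applied to sections), $t\mapsto \mathrm{vol}_{n-1}(\cO(P)\cap\phi^{-1}(t))^{1/(n-1)}$ is concave on its support. A nonnegative function whose $(n-1)$-st root is concave is log-concave. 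Alternatively, I can cite Prékopa's theorem directly: marginals of log-concave densities are log-concave.

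\textbf{Nondegeneracy.} I need $\cO(P)$ to be full-dimensional and the hyperplanes $\phi^{-1}(t)$ to be non-constant on it, so that $W$ is absolutely continuous rather than a point mass. Full-dimensionality holds because for any linear extension $f$, the set $\{0<z_{f^{-1}(1)}<\dots<z_{f^{-1}(n)}<1\}$ is an open simplex inside $\cO(P)$. Then $\phi$ is a nonzero linear functional, since $x\ne y$. The pushforward of an absolutely continuous measure on $\mathbb{R}^n$ under a surjective linear map to $\mathbb{R}$ is absolutely continuous, with the density given above. So $W$ has a genuine density whose support is a nondegenerate interval, and $\operatorname{Var}(W)>0$.

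\textbf{Expected obstacle.} The only delicate point is the passage from the $1/(n-1)$-concavity of sections to log-concavity, together with the boundary behaviour: the density may vanish at the endpoints of the support, and log-concavity must be read with $\log 0=-\infty$ allowed. This is routine. The case $n=2$ (where $n-1=1$) is covered because sections are then intervals whose length is concave in $t$.
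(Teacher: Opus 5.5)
Your proof is correct and follows essentially the route the paper intends. The paper gives no argument of its own: it calls the lemma a folklore consequence of Brunn--Minkowski and cites \cite[Cor.~5.1(a)]{AK25a}. Your application of Brunn's concavity principle (or Pr\'ekopa's theorem) to parallel hyperplane sections of the full-dimensional order polytope is exactly that standard derivation, and your absolute-continuity argument gives the positive variance needed later in the paper.
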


	\smallskip

	{The following two elementary estimates follow from the results of Lov\'asz and Vempala~\cite[Lems.~5.4, 5.5(a), and~5.7]{LV07}.}
	We include a short derivation here, as our formulation differs slightly
	from the cited statements.

	%

	\begin{lemma}[{\rm \defna{Log-concave estimates}}{}]\label{lem:width-logconcave}
		For every random variable $V$ with a log-concave density, mean $\mu$, and standard deviation $\sigma>0$,
		\begin{equation}\label{eq:width-logconcave-balance}
			\min\bigl\{\Pb[V<0],\,\Pb[V>0]\bigr\}
			\quad \geq \quad e^{-1}-\frac{|\mu|}{\sigma},
		\end{equation}
		and
		\begin{equation}\label{eq:width-logconcave-tail}
			\Pb[|V-\mu|>t\sigma]
			\quad \leq \quad  e^{-t+1}
			\qquad (t\geq0).
		\end{equation}
	\end{lemma}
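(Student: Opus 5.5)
The plan is to normalize and then derive both estimates from three standard one-dimensional facts about log-concave densities, all essentially contained in~\cite{LV07}. Both sides of~\eqref{eq:width-logconcave-balance} and~\eqref{eq:width-logconcave-tail} are invariant under replacing $V$ by $(V-c)/\sigma$ with thresholds rescaled accordingly, and log-concavity is preserved by affine maps. So after rescaling we may assume $\sigma=1$, and we work with the centered variable $W:=V-\mu$. The three facts are:
\begin{enumerate}[label=\textnormal{(\alph*)}]
\item a Gr\"unbaum-type inequality: $\Pb[W\geq 0]\geq e^{-1}$ and $\Pb[W\leq 0]\geq e^{-1}$, i.e.\ every half-line bounded by the mean carries mass at least $e^{-1}$ \cite[Lem.~5.4]{LV07};
\item the sup-norm bound $\sup_s f_W(s)\leq 1/\sigma$ for the density $f_W$ of $W$ \cite[Lem.~5.5(a)]{LV07};
\item the exponential tail bound $\Pb[|W|>t\sigma]\leq e^{-t+1}$ \cite[Lem.~5.7]{LV07}.
\end{enumerate}

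For~\eqref{eq:width-logconcave-balance}, suppose first $\mu\geq 0$ and compare the threshold $0$ with the mean. Then
\[
\Pb[V>0]\ \geq\ \Pb[V\geq \mu]\ \geq\ e^{-1}
\]
by (a). For the other side,
\[
\Pb[V<0]\ =\ \Pb[V\leq\mu]\ -\ \Pb[0\leq V\leq \mu].
\]
The subtracted term is the integral of the density over an interval of length $|\mu|$, so by (b) it is at most $|\mu|\cdot\sup f_V\leq |\mu|/\sigma$. Together with (a) applied to $\Pb[V\leq\mu]$, this gives $\Pb[V<0]\geq e^{-1}-|\mu|/\sigma$. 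The case $\mu<0$ is symmetric, applying the argument to $-V$, which is again log-concave with the same $\sigma$. Taking the minimum of the two sides yields~\eqref{eq:width-logconcave-balance}. The density being nondegenerate means atoms play no role, so strict and non-strict inequalities can be interchanged freely.

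For~\eqref{eq:width-logconcave-tail}, I would simply invoke (c) with $\sigma=1$, noting that for $t\leq 1$ the bound is trivial since $e^{-t+1}\geq 1$. If a self-contained argument is wanted, I would split $\Pb[|W|>t]$ into the two one-sided tails. Each tail function $G(s)=\Pb[W>s]$ is log-concave by Pr\'ekopa. Combining $G(0)\leq 1-e^{-1}$ from (a) with the variance normalization forces $G(s)\leq e^{-(s-1)}$-type decay, and the two tails sum to at most $e^{1-t}$.

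I expect this last step to be the only real obstacle. Getting the exact constant $e^{1-t}$ for the two-sided tail requires the extremal comparison with the one-sided exponential distribution, which is carried out in~\cite{LV07}. Rather than reproduce it, I would cite it, checking only that their isotropic normalization (mean $0$, variance $1$) matches ours after the rescaling above.
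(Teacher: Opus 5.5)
Your proposal is correct and essentially matches the paper's proof. The paper also normalizes, applies \cite[Lem.~5.4]{LV07} to the centered variable and its negative, uses the density bound $g\le 1$ from \cite[Lem.~5.5(a)]{LV07} to move the threshold between the mean and $0$ at a cost of at most $|\mu|/\sigma$, and cites \cite[Lem.~5.7]{LV07} for the tail bound (trivial for $t\le 1$). Your optional self-contained tail sketch is unnecessary and, as phrased, would not give the stated constant, since two one-sided bounds of order $e^{1-t}$ sum to $2e^{1-t}$; citing LV07, as you propose and as the paper does, is the right choice.
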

	
	\smallskip
	
	\begin{proof}
		For  \eqref{eq:width-logconcave-balance},  set $W=(V-\mu)/\sigma$, and let $g$ be its density function.
		Applying {\cite[Lem.~5.4]{LV07}} to $W$ and $-W$ gives
		\[
		\min\{\Pb[W<0],\,\Pb[W>0]\}\geq e^{-1}.
		\]
		Since $W$ has mean zero and variance one, {\cite[Lem.~5.5(a)]{LV07}} gives
		$g\leq1$. Thus moving the threshold from $0$ to $-\mu/\sigma$
		changes either probability by at most
		\[
		\int_{\min\{0,-\mu/\sigma\}}^{\max\{0,-\mu/\sigma\}}g(s)\,ds
		\leq \frac{|\mu|}{\sigma}.
		\]
		This implies
		\[
		\min\{\Pb[V<0],\,\Pb[V>0]\}
		\geq e^{-1}-\frac{|\mu|}{\sigma},
		\]
		as desired.
		
		For \eqref{eq:width-logconcave-tail},
		we apply~\cite[Lem.~5.7]{LV07} to  \. $(V-\mu)/\sigma$ \.  to get
		the stated conclusion for  $t>1$.
		For the case $t \in [0,1]$, the bound
		is trivial since $e^{1-t}\geq1$.
	\end{proof}

	\subsection{Three auxiliary inequalities}\label{ss:width-inequalities}
	
	For each $x\in X$, let
	\[
	Q_x \ := \ \max_{y\prec x}F_y,
	\qquad
	R_x \ := \ \min_{x\prec y}F_y\textcolor{red}{,}
	\]
	where an empty maximum is $0$ and an empty minimum is $1$.
	Let
	\[a_x \ := \ (n+1)\Eb[F_x-Q_x] \ = \ \frac{n+1}{2}\Eb[R_x-Q_x],\]
	where the second equality follows because, conditional on all coordinates
	other than $F_x$, the random variable $F_x$ is uniform on $[Q_x,R_x]$,
	so $F_x-Q_x$ has conditional mean $(R_x-Q_x)/2$.
	The number $2a_x$ is known as  the \defnb{discrete window parameter} $\operatorname{win}(x)$
	in the work of  Aires and Kahn~\cite[(6)]{AK25a}.

	The following lemma follows from the estimates in \cite[Eq.~(24), Thm.~2.9]{AK25a}.
	We include a short derivation here, as our formulation differs slightly
	from the cited statements.
	
	\begin{lemma}[{\rm \defna{Window estimates}}{}]\label{lem:width-windows}
		For distinct $x,y\in X$ and every antichain $B\subseteq X$,
		\begin{equation}\label{eq:width-windows}
			a_x \ \geq \ 1,
			\qquad
			d(x,y) \ \geq \ \frac{a_x}{\sqrt{3}},
			\qquad
			\sum_{x\in B}a_x \ \geq \ \frac{|B|^2}{2}.
		\end{equation}
	\end{lemma}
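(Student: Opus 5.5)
We prove the three estimates in~\eqref{eq:width-windows} separately, all within the order polytope coupling. Throughout we use the fact, noted just before the lemma, that conditional on all coordinates other than $F_x$, the variable $F_x$ is uniform on $[Q_x,R_x]$.

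\emph{The bound $a_x\geq 1$.} We rewrite the window in terms of the linear extension. On the coupled event, the gap $R_x-Q_x$ is the total length spanned by the coordinates $F_w$ with $w$ incomparable to $x$, plus the two boundary gaps adjacent to $x$'s lower and upper covers. Since the order statistics of $n$ uniform points have spacings of expected size $1/(n+1)$, we argue as follows. Conditional on the induced linear extension $f$, the point $\mathbf F$ is uniform on the corresponding simplex, so the sorted coordinates have the law of uniform order statistics. The interval $[Q_x,R_x]$ contains at least the two spacings adjacent to $F_x$, namely the one just below it and the one just above it. Each such spacing has conditional expectation exactly $1/(n+1)$. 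Hence $\Eb[R_x-Q_x]\geq 2/(n+1)$, and so $a_x=\frac{n+1}{2}\Eb[R_x-Q_x]\geq 1$.

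\emph{The variance bound.} Condition on all coordinates except $F_x$. Then $F_x$ is uniform on an interval of length $R_x-Q_x$, independent of $F_y$ given the rest, so the conditional variance of $Z_x$ is $(n+1)^2(R_x-Q_x)^2/12$. By the law of total variance, $d(x,y)^2 \geq \Eb[\operatorname{Var}(Z_x-Z_y\mid \text{rest})] = (n+1)^2\Eb[(R_x-Q_x)^2]/12$. Jensen's inequality bounds this below by $(n+1)^2(\Eb[R_x-Q_x])^2/12 = a_x^2/3$. This gives $d(x,y)\geq a_x/\sqrt3$.

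\emph{The antichain bound.} This is the main obstacle. The plan is as follows. For an antichain $B$ with $|B|=k$, consider on each sample the sorted values of $\{F_x:x\in B\}$. Since $B$ is an antichain, for each $x\in B$ the window $[Q_x,R_x]$ contains every $F_w$ with $w\in B$, because such $w$ is incomparable to $x$ and hence is neither below $Q_x$'s witnesses nor above $R_x$'s. More carefully, we only need that $Q_x\leq \min_{w\in B}F_w$ fails in general, so instead we use the inequality $R_x-Q_x\geq$ (the distance from $F_x$ to the farthest point of $B$ on each side). It should hold that $Q_x<F_w<R_x$ can fail, so the honest route is to follow~\cite[Eq.~(24), Thm.~2.9]{AK25a}. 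Conditional on the linear extension, the elements of $B$ occupy $k$ positions, and $x$ at rank $j$ among them has at least $j-1$ elements of $B$ below and $k-j$ above that are not forced below or above it. Under conditioning, the expected window of $x$ is then at least of order $(\max(j,k-j+1))/(n+1)$ spacings, counted from the positions of $B$-elements not comparable-blocked. Summing $\max(j,k+1-j)$ over $j=1,\dots,k$ gives at least $k^2/2\cdot$(normalization), since $\sum_j \max(j,k+1-j)\geq \sum_j (k+1)/2 \cdot$ appropriately; the precise count gives $\sum_x 2a_x\geq k^2$. If this direct counting proves delicate, we will instead invoke the cited Aires--Kahn estimate directly, translating $\operatorname{win}(x)=2a_x$.
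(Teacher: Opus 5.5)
Your first two estimates are correct. The variance bound is exactly the paper's argument: condition on $(F_w)_{w\neq x}$, apply the law of total variance, then Jensen. For $a_x\ge 1$ you combine $a_x=\frac{n+1}{2}\Eb[R_x-Q_x]$ with the two order-statistic spacings flanking $F_x$. The paper instead uses the one-sided identity $a_x=\Eb[f(x)-\max_{y\prec x}f(y)]\ge 1$. Both routes are fine. Your opening description of $R_x-Q_x$, as spanning the coordinates of all elements incomparable to $x$, is inaccurate, but you never use it.

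The gap is in the antichain bound: the direct counting argument fails and should be discarded.

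\emph{Why it fails.} Write $\lambda_f(x):=\max_{y\prec x}f(y)$ and $\mu_f(x):=\min_{x\prec y}f(y)$, with values $0$ and $n+1$ when empty. Conditional on $f$ one has $(n+1)\Eb[R_x-Q_x\mid f]=\mu_f(x)-\lambda_f(x)$. Incomparability of $w\in B$ with $x$ does nothing to stop a predecessor or successor of $x$ from sitting between $w$ and $x$ in $f$. For example, take $B=\{b_1,\dots,b_k\}$ with $k\ge 3$, together with $c\prec b_k\prec d$, where $c,d$ are incomparable to $b_1,\dots,b_{k-1}$. In $f=b_1\cdots b_{k-1}\,c\,b_k\,d$, the element $b_k$ has rank $j=k$ in $B$, yet its window is only the two spacings next to it, not $\max(j,k+1-j)=k$. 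So no per-extension bound of the kind you propose holds. Even if it did, $\sum_{j=1}^k\max(j,k+1-j)\approx 3k^2/4$ falls short of the required $\sum_x 2a_x\ge k^2$. The estimate is genuinely an averaged statement over the whole antichain.

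\emph{What this leaves.} Your fallback, citing \cite[Thm.~2.9]{AK25a} with $\operatorname{win}(x)=2a_x$, is precisely the paper's proof; that theorem even gives $\frac{n+1}{2n}|B|^2$. So your proposal is complete only through that citation.

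\emph{A self-contained route, if you want one.} Let $P\smallsetminus x$ denote the subposet on $X\smallsetminus\{x\}$. Two ideas are needed.
(i) Fibre $\cO(P)$ over $\cO(P\smallsetminus x)$ with fibre $[Q_x,R_x]$. The law of $(F_w)_{w\ne x}$ is then size-biased by $R_x-Q_x$, so Cauchy--Schwarz gives $\Eb[R_x-Q_x]\ge \operatorname{vol}\cO(P)/\operatorname{vol}\cO(P\smallsetminus x)=e(P)/(n\,e(P\smallsetminus x))$.
(ii) For an antichain $B$ one has $\sum_{x\in B}e(P\smallsetminus x)\le e(P)$. One way to see this: $\mathbf 1_B$ lies in Stanley's chain polytope $\mathcal C(P)$ \cite{Sta-two}, at distance $1$ from each facet $\mathcal C(P)\cap\{z_x=0\}\cong\mathcal C(P\smallsetminus x)$ with $x\in B$. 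Decompose $\mathcal C(P)$ into pyramids with apex $\mathbf 1_B$ and keep only those facets.
Then AM--HM yields
\[
\sum_{x\in B}a_x \ \ge\ \frac{n+1}{2n}\sum_{x\in B}\frac{e(P)}{e(P\smallsetminus x)} \ \ge\ \frac{n+1}{2n}\cdot\frac{|B|^2\,e(P)}{\sum_{x\in B}e(P\smallsetminus x)} \ \ge\ \frac{n+1}{2n}\,|B|^2 .
\]
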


	\begin{proof}
		The coupling between ${\mathbf F}$ and $\cL$ gives
		\[
		a_x
		\ = \
		\Eb\bigl[\cL(x)-\max_{y\prec x}\cL(y)\bigr]
		\ \geq  \ 1,
		\]
		where the empty maximum is $0$.
		This proves the first claim.
		For the second claim, we use the same conditioning argument applied in \cite[Eq.~(24)]{AK25a}.
		Let $\mathcal G=\sigma(F_z:z\neq x)$.
		Conditional on $\mathcal G$, the variable
		$F_x$ is uniform on $[Q_x,R_x]$, and hence
		%
		\[
		\operatorname{Var}(Z_x-Z_y\mid\mathcal G)
		=(n+1)^2\operatorname{Var}(F_x\mid\mathcal G)
		=\frac{(n+1)^2}{12}(R_x-Q_x)^2.
		\]
		The law of total variance now gives
		\begin{align*}
			d(x,y)^2 \ = \ 	\operatorname{Var}(Z_x-Z_y)
			&=\Eb\bigl[\operatorname{Var}(Z_x-Z_y\mid\mathcal G)\bigr]
			+\operatorname{Var}\bigl(\Eb[Z_x-Z_y\mid\mathcal G]\bigr)\\
			&\geq \Eb\bigl[\operatorname{Var}(Z_x-Z_y\mid\mathcal G)\bigr]\\
			&=\frac{(n+1)^2}{12}\Eb[(R_x-Q_x)^2]  \ \geq  \ \frac{a_x^2}{3},
		\end{align*}
		where the last inequality is because of Jensen's inequality.
		For the third claim, notice that 	
		$a_x=\operatorname{win}(x)/2$ in the notation
		of~\cite{AK25a}. Thus~\cite[Thm.~2.9]{AK25a} gives
		\[
		\sum_{x\in B} a_x
		\ 	\geq  \   \frac{n+1}{2n}|B|^2
		\ 	\geq \  \frac{|B|^2}{2},
		\]
		as desired.
	\end{proof}
	\smallskip

	The following lemma is a consequence of the continuous form of Shepp's XYZ inequality~\cite[Sec.~2]{She82},
	which in turn is a consequence of the classical FKG inequality~\cite{FKG71}.
	
	\smallskip
	\begin{lemma}[{\rm \defna{Correlation inequality}}{}]\label{lem:width-correlations}
		For all $x,y,z\in X$, we have
		\begin{equation}\label{eq:width-squared-triangle}
			d(x,z)^2 \quad \leq \quad d(x,y)^2+d(y,z)^2.
		\end{equation}
	\end{lemma}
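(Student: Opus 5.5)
The plan is to reduce \eqref{eq:width-squared-triangle} to a covariance sign condition and then get that sign from an FKG-type argument in Shepp's continuous framework. Since $Z_x-Z_z=(Z_x-Z_y)+(Z_y-Z_z)$, expanding the variance gives
\[
d(x,z)^2 \ = \ d(x,y)^2+d(y,z)^2-2\,\operatorname{Cov}\bigl(Z_x-Z_y,\,Z_z-Z_y\bigr).
\]
So it suffices to show
\[
\operatorname{Cov}\bigl(F_x-F_y,\,F_z-F_y\bigr)\ \geq\ 0.
\]
The factor $(n+1)^2$ is irrelevant to the sign. The degenerate cases, where two of $x,y,z$ coincide, are trivial: one of the two differences is identically zero, so the covariance vanishes. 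This inequality is the continuous analogue of the XYZ inequality, with $y$ as the common reference element: positions measured relative to $y$ should be positively correlated.

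\textbf{Change of variables.} I would follow Shepp and pass to relative coordinates $w_u:=F_u-F_y$ for $u\in X$, so that $w_y=0$. The map $\mathbf F\mapsto (w,F_y)$ is a measure-preserving linear bijection. Under it, the order polytope becomes the set of pairs $(w,c)$ such that:
\begin{enumerate}[label=\textnormal{(\roman*)}]
\item $w_u\le w_v$ whenever $u\prec v$ (the poset constraints);
\item $0\le c+w_u\le 1$ for all $u$ (the box constraints).
\end{enumerate}
Integrating out $c$, the marginal density of $w$ is proportional to
\[
\phi(w) \ := \ \mathbf 1_{\Lambda}(w)\cdot\Bigl(1-\max_{u}w_u+\min_u w_u\Bigr)_+ ,
\]
where $\Lambda:=\{w: w_y=0,\ w_u\le w_v \text{ for } u\prec v\}$. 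Here $\max_u w_u$ and $\min_u w_u$ range over all $u\in X$, including $y$, so $\min_u w_u\le 0\le\max_u w_u$. The key point is that $\Lambda$ is closed under coordinatewise $\max$ and $\min$. Indeed, each constraint $w_u\le w_v$ and the constraint $w_y=0$ is preserved by $\vee$ and $\wedge$. So $\Lambda$ is a sublattice of $\mathbb R^{X}$.

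\textbf{FKG step.} Next I will check that $\phi$ is log-supermodular: $\phi(a\vee b)\,\phi(a\wedge b)\ge \phi(a)\,\phi(b)$ for $a,b\in\Lambda$. Write $A=\max a$, $\alpha=\min a$, and similarly $B,\beta$ for $b$. Then
\[
\max(a\vee b)=\max(A,B), \quad \min(a\vee b)=\max(\alpha,\beta), \quad \max(a\wedge b)\le\min(A,B), \quad \min(a\wedge b)=\min(\alpha,\beta).
\]
Hence the two lengths on the left are at least
\[
\ell_1:=1-\max(A,B)+\max(\alpha,\beta), \qquad \ell_2:=1-\min(A,B)+\min(\alpha,\beta).
\]
Their sum equals $(1-A+\alpha)+(1-B+\beta)$. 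A short case check (whether the max and min are attained at the same argument or not) shows that the pair $\{\ell_1,\ell_2\}$ is either equal to, or more spread out than, the pair $\{1-A+\alpha,\,1-B+\beta\}$ with the same sum. Its product is therefore not smaller, provided both lengths stay nonnegative. The positive-part truncation needs a separate remark when a factor on the right vanishes.

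With log-supermodularity in hand, the continuous FKG inequality on the lattice $\Lambda$ applies. The coordinates $w_x$ and $w_z$ are increasing functions on $\Lambda$, so $\operatorname{Cov}(w_x,w_z)\ge0$, which is exactly the required inequality.

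\textbf{Main obstacle.} I expect the main difficulty to be the log-supermodularity check for $\phi$. The $\max$ term of the meet is only bounded by $\min(A,B)$ and not equal to it, and the inequality must be verified uniformly including the truncation at $0$. If the direct verification becomes messy, I would instead discretize: approximate by the lattice of points $\tfrac1N\mathbb Z^X\cap\Lambda$, apply the Ahlswede--Daykin/FKG inequality there (mirroring Shepp's proof of the XYZ inequality in~\cite{She82}), and pass to the limit $N\to\infty$.
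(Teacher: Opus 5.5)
Your reduction matches the paper's. Expanding the variance turns \eqref{eq:width-squared-triangle} into $\operatorname{Cov}(Z_x-Z_y,\,Z_z-Z_y)\ge 0$, and the paper gets this sign in one line by citing the continuous form of Shepp's XYZ inequality. You instead reprove that input via relative coordinates and FKG. That is sound in outline, but the step you single out as the crux is argued backwards.

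\textbf{The log-supermodularity step.} For a fixed sum, a \emph{more} spread-out pair has a \emph{smaller} product, so ``more spread out, hence product not smaller'' is a non sequitur. The opposite is true. Assume $A\ge B$ by symmetry. If $\alpha\ge\beta$, then $\{\ell_1,\ell_2\}=\{1-A+\alpha,\,1-B+\beta\}$. If $\alpha<\beta$, then $\ell_1=1-A+\beta$, $\ell_2=1-B+\alpha$, and
\[
\ell_1\ell_2-(1-A+\alpha)(1-B+\beta) \ = \ (A-B)(\beta-\alpha) \ \ge \ 0 ,
\]
so the new pair is \emph{less} spread out. The truncation at $0$ is then harmless: if both factors on the right are positive, then $\ell_1\ell_2>0$ and $\ell_1+\ell_2>0$ force $\ell_1,\ell_2>0$.

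\textbf{Smaller points.} Your identity $\min(a\vee b)=\max(\alpha,\beta)$ is false in general; take $a=(0,-1,0)$ and $b=(0,0,-1)$ with first coordinate $y$. Only $\min(a\vee b)\ge\max(\alpha,\beta)$ holds, which is the direction you need anyway. Also, when $x=z\neq y$ the two differences coincide rather than vanish, but then $d(x,z)=0$ and there is nothing to prove. With these fixes, the continuous FKG inequality for log-supermodular densities on $\mathbb{R}^{X\smallsetminus\{y\}}$ finishes the argument; it does not require strict positivity of the density.

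\textbf{A shortcut.} You can skip the length computation entirely, as Shepp does. Keep $c=F_y$ and order pairs by $(c,w)\le(c',w')$ iff $c\ge c'$ and $w\le w'$ coordinatewise. A direct check shows the order polytope, written in the coordinates $(c,w)$, is closed under the join $(\min(c,c'),\,w\vee w')$ and the meet $(\max(c,c'),\,w\wedge w')$. So its indicator is trivially log-supermodular, and $w_x,w_z$ are increasing in this order. Your marginal computation just verifies that this log-supermodularity survives integrating out $c$.
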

	
	\begin{proof}
		The continuous form of   Shepp's XYZ inequality~\cite{She82}
		says that  the random variables \. $Z_x-Z_y$ \. and  \. $Z_z-Z_y$ \.  are positively associated.
		In particular, they have  {nonnegative covariance} and hence
		\begin{align*}
			d(x,z)^2
			&\ = \ \operatorname{Var}\bigl((Z_x-Z_y)-(Z_z-Z_y)\bigr) \\
			&\ \leq \ \operatorname{Var}(Z_x-Z_y)+\operatorname{Var}(Z_z-Z_y)
			\ = \ d(x,y)^2+d(y,z)^2,
		\end{align*}
		as desired.
	\end{proof}
	
	\smallskip

	%
	%

	The next lemma follows from the \defnb{Haqi--Kahn ideal inequality},
	formerly known as Kahn's ideal conjecture, which was recently proved
	by Haqi~\cite[Thm.~4.1]{Haq26}.
	An \defnb{order ideal} is a subset $I\subseteq X$ such that \. $x\in I$ \. and \. $y\prec x$ \. imply \. $y\in I$\..
	We write \.  $\max(I):= \max_{P}(I)$ \.  and  \. $\min(J):=\min_{P}(J)$ \.  {for the sets of maximal elements of ${P}[I]$ and minimal elements of ${P}[J]$, respectively.}
	
	\smallskip
	
	\begin{lemma}\label{lem:width-ideal-gap}
		Let $I$ be a nonempty proper order ideal of ${P}$, and let \. $J:=X\setminus I$\..
		Then
		\begin{equation}\label{eq:width-ideal-gap}
			\min_{y\in J}h(y)-\max_{x\in I}h(x)
			\quad \leq \quad
			|\max(I)|+|\min(J)|-1.
		\end{equation}
	\end{lemma}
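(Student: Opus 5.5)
The plan is to compare both sides of \eqref{eq:width-ideal-gap} with the position of the \emph{boundary} between $I$ and $J$ in a uniformly random linear extension $\cL$. I will reduce the claim to one inequality for $I$ and a dual one for $J$, and obtain each from the Haqi--Kahn ideal inequality.

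Set $T_I:=\max_{x\in I}\cL(x)$ and $T_J:=\min_{y\in J}\cL(y)$. Since $I$ is a down-set, the maximum of $\cL$ over $I$ is attained on $\max(I)$, so $T_I=\max_{x\in\max(I)}\cL(x)$. Likewise $T_J=\min_{y\in\min(J)}\cL(y)$. Both $T_I$ and $T_J$ are deterministically tied to $|I|$: the $|I|$ elements of $I$ occupy $|I|$ distinct positions, so $T_I\geq|I|$. The $|J|$ elements of $J$ occupy positions in $[n]$, so $T_J\leq n-|J|+1=|I|+1$. Taking expectations gives
\[
\Eb[T_I] \ \geq \ |I|, \qquad \Eb[T_J] \ \leq \ |I|+1 .
\]

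The key input, which I expect to be exactly the content of the Haqi--Kahn ideal inequality \cite[Thm.~4.1]{Haq26}, is that the expected maximum position over an antichain exceeds the largest single height by at most the antichain's size minus one. Here I am assuming that \cite[Thm.~4.1]{Haq26} gives this form for the antichain $\max(I)$, and the dual form (reverse the poset) for $\min(J)$:
\[
\Eb[T_I] \ \leq \ \max_{x\in\max(I)} h(x)+|\max(I)|-1,
\qquad
\Eb[T_J] \ \geq \ \min_{y\in\min(J)} h(y)-\bigl(|\min(J)|-1\bigr).
\]
Combining these with the previous display gives
\[
\max_{x\in I}h(x) \ \geq \ |I|-|\max(I)|+1,
\qquad
\min_{y\in J}h(y) \ \leq \ |I|+|\min(J)|.
\]
Subtracting the first from the second yields exactly \eqref{eq:width-ideal-gap}.

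The main obstacle is matching the precise statement of \cite[Thm.~4.1]{Haq26} to the form used above. The theorem may instead be phrased directly in terms of $\Eb[\max_{x\in I}\cL(x)]$ and the ideal $I$, or with $+|\max(I)|$ rather than $+|\max(I)|-1$. In the first case I would check that restricting the maximum to $\max(I)$ is harmless, which holds by the down-set property. For the off-by-one, I would use the integrality of positions, or the fact that the $|I|$ positions of $I$ force $T_I\geq |I|$ with the slack already accounted for, to recover the $-1$. The dual statement for $J$ follows by applying the same result to the dual poset, with $J$ as an order ideal, and $\cL$ replaced by $n+1-\cL$.
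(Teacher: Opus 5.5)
Your final step, subtracting $\max_{x\in I}h(x)\ge |I|-|\max(I)|+1$ from $\min_{y\in J}h(y)\le |I|+|\min(J)|$, is exactly the paper's. The route you take to these two inequalities, however, fails.

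The ``key input'' you assume is $\Eb[\max_{x\in A}\cL(x)]\le \max_{x\in A}h(x)+|A|-1$ for $A=\max(I)$. This inequality is false, so it cannot be what \cite[Thm.~4.1]{Haq26} says, and no rephrasing or off-by-one adjustment will rescue it. For a counterexample, let $P$ be a chain $c_1\prec c_2\prec c_3\prec c_4$ together with two isolated elements $a,b$, and take $I=\{a,b\}$. This is a nonempty proper order ideal with $\max(I)=\{a,b\}$. The pair $(\cL(a),\cL(b))$ is uniform over ordered pairs of distinct positions in $[6]$. Hence $h(a)=h(b)=7/2$, while $\Eb[\max\{\cL(a),\cL(b)\}]=14/3>9/2=7/2+2-1$. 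With a chain of length $m$ the excess $\Eb[T_I]-\max_{x\in I}h(x)$ equals $(m+3)/6$, which is unbounded even though $|\max(I)|=2$. The underlying problem is that the deterministic bound $\Eb[T_I]\ge|I|$ is extremely lossy. Here $|I|=2$ while $\Eb[T_I]$ grows linearly in $m$, so routing through the expected maximum position loses all the information the lemma needs.

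The fix is to drop $T_I$ and $T_J$ entirely. The Haqi--Kahn ideal inequality is itself the statement $\max_{x\in I}h(x)\ge |I|-|\max(I)|+1$, which is about average heights rather than the expected maximum position. Apply it to $J$, viewed as an order ideal of the dual poset with height function $h^*(y)=n+1-h(y)$. This gives $\min_{y\in J}h(y)\le n+1-(|J|-|\min(J)|+1)=|I|+|\min(J)|$. Subtracting the two inequalities yields the lemma, and this is precisely the paper's proof.
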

	
	\begin{proof}
		Haqi--Kahn ideal inequality~\cite[Thm.~4.1]{Haq26} gives
		\[
		\max_{x\in I}h(x)
		\quad \geq \quad |I|-|\max(I)|+1.
		\]
		Applying the same inequality to $J$ in the dual poset  gives
		\[
		\min_{y\in J}h(y) \ =  \  n+1- \max_{y\in J} h^*(y)
		\  \leq  \    n+1 -(|J|-|\min(J)|+1)   \ = \ |I|+|\min(J)|,
		\]
		where $h^*$ is the height function in the dual poset.
		{Taking the difference of the two inequalities proves the result.}
	\end{proof}
	
	\medskip
	
	\section{Proof of Theorem~\ref{thm:Aires}}\label{s:proof:Aires}
	Fix a constant   \. $\ep>0$ \.  which we assume to satisfy $\ep <e^{-1}$. {Note that this parameter is the deficit from $e^{-1}$ in this section, and is independent  of the improvement parameter in Theorem~\ref{thm:main}.}
	We  prove the contrapositive of Theorem~\ref{thm:Aires}.
	That is to say,
	we will assume throughout this section that
	\begin{equation}\label{eq:width-deficit}
		\delta({P}) \quad \leq \quad e^{-1}-\ep,
	\end{equation}
	and we will show  that
	the width of ${P}$ is  bounded in terms of $\ep^{-1}$.

	\subsection{The proof idea}
	A useful guiding example is the theorem of Koml\'os~\cite{Kom90},
	which shows that $\delta({P})\to 1/2$ as $n\to\infty$ whenever
	${P}$ has $\Omega(n)$ minimal elements.
	The guiding principle of our argument is to find a \defna{large antichain}
	whose elements have closely \defna{clustered average heights}.
	The connection between small differences of average heights and
	balanced pairs was already central to the breakthrough work of
	Kahn--Saks~\cite{KS84}.
	Although a general poset need not have many minimal or maximal
	elements, we can instead consider antichains of the following form:
	\begin{equation}\label{eq:beautiful-set}
		\max\bigl(\{y\in X:h(y)\leq h(x)\}\bigr)
		\qquad\text{or}\qquad
		\min\bigl(\{y\in X:h(y) > h(x)\}\bigr),
	\end{equation}
	where $x\in X$, and $\min(S)$ and $\max(S)$ denote the sets of
	minimal and maximal elements of the induced poset ${P}[S]$,
	respectively.
	We call the sets in~\eqref{eq:beautiful-set} \defnb{canopy antichains}.\footnote{%
		The name comes from viewing these antichains as canopies over the
		corresponding order ideals or its dual.}	{There are two reasons for choosing sets of this form.}

	The first reason comes from Haqi's recent breakthrough proof of the
	\defnb{Haqi--Kahn ideal inequality}~\cite{Haq26}.
	This inequality guarantees a canopy antichain of size at least half
	the largest gap  between consecutive average heights in $P$.
	Using \defnb{log-concavity estimates} together with the assumed bound
	on $\delta(P)$ in~\eqref{eq:width-deficit}, we then show that this gap
	is at least a positive constant multiple of the width, where the
	constant depends only on $\ep$.
	Thus large width guarantees a \defna{large} antichain of the desired form.

	The second reason is that the definition in~\eqref{eq:beautiful-set}
	suggests that the heights within  a canopy antichain should cluster
	near $h(x)$.
	Unfortunately, this need not hold in general, so we modify ${P}$ by adding relations
	that remove elements with heights far from $h(x)$ in the canopy antichain.
	The \defnb{correlation inequality} in Lemma~\ref{lem:width-correlations},
	together with log-concave tail estimates and the assumed bound
	on $\delta(P)$ in~\eqref{eq:width-deficit}, shows that these added
	relations hold with very high probability, so the cost of passing
	from ${P}$ to $\cQ$ is small.
	The corresponding canopy antichain in $\cQ$ now has the desired property:
	its original heights in ${P}$ now  \defna{cluster near $h(x)$}.
	In particular, the span of its average heights in this antichain is small.

	Finally, combining log-concavity with \defnb{window estimates} of Aires--Kahn~\cite{AK25a}, we obtain a precise inequality relating the size
	of an antichain to the span of its average heights.
	Together with the clustering behavior observed above, this inequality gives an upper
	bound on the size of our canopy antichain in terms of $\ep^{-1}$.
	Since this size is at least a positive constant multiple of the width
	of ${P}$, we obtain the desired upper bound on the width.

	\medskip


	We now proceed to the proof of Theorem~\ref{thm:Aires}.

	\subsection{{Bounding the width of a poset from above}}\label{ss:width-deficit}
	We may assume without loss of generality that ${P}$ has a unique minimum element
	and a unique maximum element, since adjoining these elements changes neither
	$\delta({P})$ nor $w({P})$.
	We now list the elements of ${P}$ as $v_1,\ldots,v_n$ in {nondecreasing}
	order of height,
	\[
	h(v_1) \quad {\leq} \quad \cdots \quad {\leq} \quad h(v_n),
	\]
	and write
	\[
	\gap({P}) \quad := \quad
	\max\bigl\{h(v_1),\ h(v_{i+1})-h(v_i)\ (1\leq i<n),\
	n+1-h(v_n)\bigr\}.
	\]
	Since ${P}$ has a minimum element and a maximum element, we have
	\. $h(v_1)=n+1-h(v_n)=1$ \. and so \. $\gap({P})\geq1$\..
	We now present two  lemmas that will be used throughout this section.
	In particular, these lemmas give two different upper bounds for the width of a poset.
	\smallskip

	%

	\begin{lemma}
		Let $x,y$ be distinct elements of ${P}$.
		Then
		\begin{align}
			\label{eq:width-separation-1}
			d(x,y)\ &\leq\ \frac{\Delta(x,y)}{\ep}; \\
			1\  &\leq\ a_x \ \leq \  \frac{\sqrt{3}}{\ep}\Delta(x,y). \label{eq:width-separation-2}
		\end{align}
	\end{lemma}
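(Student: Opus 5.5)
The plan is to apply the log-concave balance estimate \eqref{eq:width-logconcave-balance} of Lemma~\ref{lem:width-logconcave} to the random variable $V:=Z_x-Z_y$. By Lemma~\ref{lem:lc-density}, $V$ has a nondegenerate log-concave density, so its standard deviation $\sigma=d(x,y)$ is positive. Its mean is $\mu=h(x)-h(y)$, so $|\mu|=\Delta(x,y)$. By the coupling of $\mathbf F$ with $\cL$, we have $\Pb[V<0]=\Pb[\cL(x)<\cL(y)]$ and $\Pb[V>0]=\Pb[\cL(y)<\cL(x)]$, since ties occur with probability zero. Hence the minimum of these two probabilities is exactly $\delta(P;x,y)\le\delta(P)$.

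Combining \eqref{eq:width-logconcave-balance} with the standing assumption \eqref{eq:width-deficit} gives
\[
e^{-1}-\frac{\Delta(x,y)}{d(x,y)} \ \leq \ \delta(P;x,y) \ \leq \ e^{-1}-\ep,
\]
that is, $\ep\, d(x,y)\le \Delta(x,y)$. Dividing by $\ep>0$ yields \eqref{eq:width-separation-1}. No case split is needed: if $\Delta(x,y)=0$, the inequality would force $d(x,y)=0$, contradicting nondegeneracy. So in fact the argument shows $\Delta(x,y)>0$ for every pair of distinct elements, which is consistent with the claim.

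For \eqref{eq:width-separation-2}, the lower bound $a_x\ge1$ is the first part of Lemma~\ref{lem:width-windows}. For the upper bound, the second part of Lemma~\ref{lem:width-windows} gives $a_x\le\sqrt3\,d(x,y)$, and chaining this with \eqref{eq:width-separation-1} gives $a_x\le \sqrt3\,\Delta(x,y)/\ep$.

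There is no serious obstacle here. The only step requiring care is the identification of $\min\{\Pb[V<0],\Pb[V>0]\}$ with $\delta(P;x,y)$ via the order-polytope coupling. This also requires checking that the sign convention for $\mu$ does not matter, which holds because only $|\mu|$ enters the estimate.
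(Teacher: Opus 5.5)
Your proposal is correct and follows the paper's proof essentially verbatim: apply \eqref{eq:width-logconcave-balance} to $Z_x-Z_y$ using Lemma~\ref{lem:lc-density} and the deficit assumption \eqref{eq:width-deficit} to get \eqref{eq:width-separation-1}, then chain it with the first two window estimates of Lemma~\ref{lem:width-windows} to get \eqref{eq:width-separation-2}. Your extra remarks, that $\min\{\Pb[V<0],\Pb[V>0]\}=\delta(P;x,y)$ via the coupling and that the argument forces $\Delta(x,y)>0$, are accurate details the paper leaves implicit.
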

	
	\smallskip
	
	\begin{proof}
		First note that $Z_x-Z_y$ is
		a random variable with  mean $h(x)-h(y)$,
		standard deviation $d(x,y)$, and with a log-concave density~(Lemma~\ref{lem:lc-density}).
		Thus \eqref{eq:width-logconcave-balance} gives us
		\[
		e^{-1}-\frac{\Delta(x,y)}{d(x,y)}
		\ \leq\ \delta({P};x,y)
		\ \leq\ \delta({P})
		\ \leq\ e^{-1}-\ep.
		\]
		Rearranging, we obtain
		\begin{equation*}
			d(x,y)\ \leq\ \frac{\Delta(x,y)}{\ep}.
		\end{equation*}
		Lemma~\ref{lem:width-windows} then gives
		\begin{equation*}
			1\ \leq\ a_x
			\ \leq\ \sqrt{3}\,d(x,y)
			\ \leq\ \frac{\sqrt{3}}{\ep}\Delta(x,y),
		\end{equation*}
		and the proof is complete.
	\end{proof}
	
	\smallskip
	%
	%
	%
	
	\smallskip

	In particular, it follows from
	~\eqref{eq:width-separation-2} that  the heights of any two distinct
	elements differ by at least $\ep/\sqrt{3}$. {Thus the height order is in fact strictly increasing.}
	This also  implies that every interval of length
	$\gap({P})$ contains the heights of at most
	\. $1+(\sqrt{3}/\ep) \. \gap({P})$ \. elements.
	For a nonempty subset $B\subseteq X$, let
	\[
	H(B) \quad := \quad \max_{x\in B}h(x)-\min_{x\in B}h(x),
	\]
	the \defnb{{span of the average heights}} in $B$.
	
	The following lemma is the main result of this subsection, which
	gives us (1) an upper bound for the width $w({P})$ in terms of both $\gap({P})$ and $\ep^{-1}$,
	and (2) an upper bound for the size of an antichain $B$  in terms of its {span of the average heights} $H(B)$.

	\smallskip
	
	\begin{lemma}\label{lem:width-gap-estimates}
		We have
		\begin{equation}\label{eq:width-from-gap}
			w({P}) \quad \leq \quad \frac{2\sqrt{3}}{\ep}\gap({P}).
		\end{equation}
		Moreover, for all $x,y\in X$,
		\begin{equation}\label{eq:width-variance-distance}
			d(x,y)^2 \quad \leq \quad \frac{\gap({P})\Delta(x,y)}{\ep^2},
		\end{equation}
		and every antichain $B$ with $|B|\geq2$ satisfies
		\begin{equation}\label{eq:width-antichain-span}
			|B|^2 \quad \leq \quad \frac{4\sqrt{3}}{\ep}H(B).
		\end{equation}
	\end{lemma}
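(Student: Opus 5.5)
All three bounds come from combining the window estimates of Lemma~\ref{lem:width-windows} with the separation bounds \eqref{eq:width-separation-1}--\eqref{eq:width-separation-2}. Each time, the comparison element $y$ is chosen to be a near neighbour of $x$ in the height order, so that $\Delta(x,y)$ is small.

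\emph{Proof of \eqref{eq:width-from-gap}.} Let $B$ be an antichain with $|B|=w({P})$.
\begin{itemize}
\item For each $x\in B$, let $y$ be an element adjacent to $x$ in the height order $v_1,\dots,v_n$ (take $v_{i+1}$ if $x=v_i$ with $i<n$, and $v_{n-1}$ if $i=n$). Then $\Delta(x,y)\leq\gap({P})$.
\item By \eqref{eq:width-separation-2}, this gives $a_x\leq(\sqrt3/\ep)\gap({P})$.
\item Summing over $x\in B$ and using the third inequality in \eqref{eq:width-windows} yields
\[
\frac{|B|^2}{2}\ \leq\ \sum_{x\in B}a_x\ \leq\ |B|\,\frac{\sqrt3}{\ep}\gap({P}).
\]
Dividing by $|B|/2$ gives \eqref{eq:width-from-gap}.
\end{itemize}

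\emph{Proof of \eqref{eq:width-variance-distance}.} We may assume $x\neq y$, say $x=v_i$ and $y=v_j$ with $i<j$.
\begin{itemize}
\item Iterate the squared triangle inequality \eqref{eq:width-squared-triangle} along the consecutive elements $v_i,v_{i+1},\dots,v_j$. This gives
\[
d(x,y)^2\leq\sum_{k=i}^{j-1}d(v_k,v_{k+1})^2 .
\]
\item Each term is bounded using \eqref{eq:width-separation-1}:
\[
d(v_k,v_{k+1})^2\leq\frac{\Delta_k^2}{\ep^2}\leq\frac{\gap({P})\,\Delta_k}{\ep^2},
\]
where $\Delta_k:=h(v_{k+1})-h(v_k)\leq\gap({P})$.
\item Since the heights are nondecreasing, the $\Delta_k$ telescope to $\sum_k\Delta_k=\Delta(x,y)$, which gives the claim.
\end{itemize}

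\emph{Proof of \eqref{eq:width-antichain-span}.} A direct pairing argument gives only $|B|\lesssim H(B)/\ep$. The quadratic bound needs a sharper choice of neighbours, so that the separations used sum to about $H(B)$ rather than to $|B|H(B)$.
\begin{itemize}
\item List $B=\{b_1,\dots,b_k\}$ in increasing order of height, where $k=|B|\geq2$.
\item For $i<k$, apply \eqref{eq:width-separation-2} with $y=b_{i+1}$. For $b_k$, use $y=b_{k-1}$.
\item This gives
\[
\sum_{x\in B}a_x\ \leq\ \frac{\sqrt3}{\ep}\Bigl(\sum_{i<k}\bigl(h(b_{i+1})-h(b_i)\bigr)+h(b_k)-h(b_{k-1})\Bigr)\ \leq\ \frac{2\sqrt3}{\ep}H(B).
\]
\item Combining with $\sum_{x\in B}a_x\geq|B|^2/2$ from Lemma~\ref{lem:width-windows} yields \eqref{eq:width-antichain-span}.
\end{itemize}

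The main point requiring care is exactly this choice of consecutive neighbours within $B$ in the last part. It is also the only step where the hypothesis $|B|\geq2$ enters, since $b_k$ needs the neighbour $b_{k-1}$. Everything else follows directly from the cited lemmas.
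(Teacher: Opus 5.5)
Your proposal is correct and matches the paper's proof step for step. For \eqref{eq:width-from-gap} you use a neighbour in the height order together with the window lower bound $\sum_{x\in B}a_x\ge|B|^2/2$; for \eqref{eq:width-variance-distance} you iterate the squared triangle inequality along consecutive elements and bound each term using \eqref{eq:width-separation-1}; and for \eqref{eq:width-antichain-span} you use consecutive neighbours within $B$, taking the predecessor for the top element, exactly as the paper does.
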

	
	\smallskip
	
	\begin{proof}
		For each $x\in X$, choose a neighbor $y$ of $x$ in the height order.
		Applying~\eqref{eq:width-separation-2} to this pair gives
		\[
		a_x \quad \leq \quad \frac{\sqrt{3}}{\ep}\Delta(x,y) \quad \leq \quad \frac{\sqrt{3}}{\ep}\gap({P}).
		\]
		Thus, for an antichain $B$ of size $w({P})$,
		Lemma~\ref{lem:width-windows} gives
		\[
		\frac{w({P})^2}{2}
		\quad \leq \quad \sum_{x\in B}a_x
		\quad \leq \quad \frac{\sqrt{3}}{\ep}\gap({P}) \. w({P}),
		\]
		which proves~\eqref{eq:width-from-gap}.
		
		For~\eqref{eq:width-variance-distance}, let $i<j$.
		Repeated application of Lemma~\ref{lem:width-correlations},
		followed by~\eqref{eq:width-separation-1}, gives
		\begin{align*}
			d(v_i,v_j)^2
			&\ \leq \ \sum_{k=i}^{j-1}d(v_k,v_{k+1})^2 \ \leq \ \frac{1}{\ep^2}
			\sum_{k=i}^{j-1}\bigl(h(v_{k+1})-h(v_k)\bigr)^2 \\
			&\ \leq \ \frac{\gap({P})}{\ep^2}
			\bigl(h(v_j)-h(v_i)\bigr),
		\end{align*}
		as desired.
		
		Finally, let $B$ be an antichain of size $b\geq2$, and list its
		elements as $b_1,\ldots,b_b$ in increasing order of height.
		We then have
		\begin{align*}
			\frac{b^2}{2}
			&\ \leq \ \sum_{x\in B}a_x \\
			&\ \leq \ \frac{\sqrt{3}}{\ep}
			\sum_{i=1}^{b-1}\bigl(h(b_{i+1})-h(b_i)\bigr)
			+\frac{\sqrt{3}}{\ep}\bigl(h(b_b)-h(b_{b-1})\bigr) \\
			&\ \leq \ \frac{2\sqrt{3}}{\ep}H(B),
		\end{align*}
		where the first inequality is due to Lemma~\ref{lem:width-windows},
		and the second inequality is by applying~\eqref{eq:width-separation-2} to each $b_i$ and its successor,
		using the predecessor for $b_b$.
		This proves~\eqref{eq:width-antichain-span}.
	\end{proof}
	
	\smallskip

	{Now that we have \eqref{eq:width-from-gap}, it remains to bound} $\gap({P})$
	in terms of $\ep^{-1}$.
	
	\subsection{Adding beneficial relations to the poset}\label{ss:width-localization}
	
	We may assume that
	\begin{equation}\label{eq:width-large-gap}
		\gap({P}) \quad \geq \quad \frac{10e(1+\sqrt{3})}{\ep^3},
	\end{equation}
	since otherwise~\eqref{eq:width-from-gap} already gives the desired bound.
	In particular, $\gap({P})>1$, so there are consecutive elements $u,v$
	in the height order such that
	\[
	h(v)-h(u) \quad = \quad \gap({P}).
	\]
	Set
	\[
	I \ := \ \{x\in X:h(x)\leq h(u)\},
	\qquad
	J \ := \ X\setminus I.
	\]
	Note that $I$ is nonempty since $u \in I$,
	and is proper because $v \notin I$ (as $h(v)-h(u)>1$).
	Let
	\[
	R \ := \ \frac{288}{\ep^2},
	\qquad
	\ell \ := \ R \. \gap({P}) \. (\log\gap({P}))^2.
	\]
	Let $\cQ$ be obtained from ${P}$ by adjoining the comparisons
	\begin{equation}\label{eq:width-added-comparisons}
		z\prec_{\cQ}u \quad \text{if } h(z)\leq h(u)-\ell,
		\qquad
		v\prec_{\cQ}z \quad \text{if } h(z)\geq h(v)+\ell,
	\end{equation}
	and taking the transitive closure.
	Each added comparison is oriented in increasing order of the original
	heights, so these comparisons are consistent and $\cQ$ is a poset.
	Moreover, each added comparison is between two elements of $I$
	or between two elements of $J$, so $I$ remains an order ideal of $\cQ$.
	We use $h_{{P}}$ and $h_{\cQ}$ to distinguish the two height functions
	from now on.
	We assume that
	{unsubscripted $\Delta,d,a_x$ still refer to ${P}$. We write $H_{{P}}(B):=\max_{x\in B}h_{{P}}(x)-\min_{x\in B}h_{{P}}(x)$, and $h_{{P}}(B):=\{h_{{P}}(x):x\in B\}$.}
	
	We would like to work  with $\cQ$ in place of ${P}$.
	The following lemma shows that the cost of making this change is small.
	Let $E$ be the event that all comparisons
	in~\eqref{eq:width-added-comparisons} hold in the random point
	$\mathbf{F}$ of $\cO({P})$, and let \. $q:=\Pb[E^c]$\..
	
	\smallskip
	
	\begin{lemma}\label{lem:q-small}
		We have
		\begin{align}\label{eq:width-conditioning-error}
			q
			\ \leq \ \gap({P})^{-4}.
		\end{align}
	\end{lemma}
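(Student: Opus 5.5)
The plan is a union bound over the added comparisons, with each failure probability controlled by the log-concave tail estimate \eqref{eq:width-logconcave-tail}. The standard deviation that enters is bounded through \eqref{eq:width-variance-distance}.

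Write $g:=\gap(P)$ and fix a $z$ with $h(z)\leq h(u)-\ell$. The comparison $z\prec_{\cQ}u$ fails exactly when $F_z>F_u$, i.e.\ when $V:=Z_z-Z_u>0$. The variable $V$ has a log-concave density by Lemma~\ref{lem:lc-density}. Its mean is $\mu=-\Delta$, where $\Delta:=\Delta(z,u)\geq \ell$, and its standard deviation is $\sigma=d(z,u)$. By \eqref{eq:width-variance-distance}, $\sigma^2\leq g\Delta/\ep^2$. The failure event is contained in $\{|V-\mu|>\Delta\}$, so \eqref{eq:width-logconcave-tail} with $t=\Delta/\sigma$ gives
\[
\Pb[F_z>F_u]\ \leq\ e^{1-\Delta/\sigma}\ \leq\ \exp\!\Bigl(1-\ep\sqrt{\Delta/g}\Bigr)\ \leq\ \exp\!\Bigl(1-\ep\sqrt{R}\,\log g\Bigr).
\]
Here we used $\Delta\geq \ell=Rg(\log g)^2$. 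Since $\ep\sqrt R=\sqrt{288}>16$, each term is at most $e\,g^{-16}$. The symmetric case $v\prec_{\cQ}z$ with $h(z)\geq h(v)+\ell$ is handled identically.

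It remains to count the comparisons, and this is where I expect the only real care to be needed. A crude bound uses $n$, which is not controlled by $g$, so the count must exploit the spacing of the heights. By \eqref{eq:width-separation-2}, consecutive heights differ by at least $\ep/\sqrt3$, but using only this spacing gives $n\sqrt3/\ep$ terms, still depending on $n$. Instead I would let the decay in $\Delta$ pay for the count. Order the relevant $z$ by height; the $k$-th one below $u$ has $\Delta\geq \ell+(k-1)\ep/\sqrt3$. Then the sum is
\[
\sum_{k\ge1}\exp\!\bigl(1-\ep\sqrt{(\ell+(k-1)\ep/\sqrt3)/g}\bigr).
\]
Comparing this sum with an integral over $\Delta\geq\ell$, with density of terms at most $\sqrt3/\ep$ per unit height, the integral of $e^{1-\ep\sqrt{\Delta/g}}$ is of order $(g/\ep^2)\,(1+\ep\sqrt{\ell/g})\,e^{1-\ep\sqrt{\ell/g}}$.

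This amounts to $\mathrm{poly}(g,\ep^{-1},\log g)\cdot g^{-16}$. Using \eqref{eq:width-large-gap}, which gives $g\geq 10e(1+\sqrt3)/\ep^{3}$ and so $\ep^{-1}\leq g^{1/3}$, the polynomial factors are absorbed. Doubling for the two sides then leaves $q\leq g^{-4}$ with plenty of room. The constant $288$ is clearly chosen so that $\ep\sqrt{R}$ is large enough for this bookkeeping, and the main obstacle is checking that the integral/sum comparison and the absorption of the $\ep^{-1}$, $\log g$ factors indeed fit under $g^{-4}$.
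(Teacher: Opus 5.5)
Your proposal is correct and follows essentially the same route as the paper. Both arguments take a union bound over the added comparisons and bound each failure probability by $\exp\bigl(1-\ep\sqrt{\Delta/\gap(P)}\bigr)$, using \eqref{eq:width-logconcave-tail} together with \eqref{eq:width-variance-distance}. Both then control the number of terms through the height separation $\ep/\sqrt{3}$ from \eqref{eq:width-separation-2}, compare the sum with an integral, and absorb the polynomial factors using \eqref{eq:width-large-gap}.

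The only difference is cosmetic. The paper groups the comparisons into height windows of length $\gap(P)$, each containing at most $1+\sqrt{3}\,\gap(P)/\ep$ elements, while you index the elements one at a time. The bookkeeping you deferred does close with a large margin: the total is roughly $4\sqrt{3}\,e\,\gap(P)\,\ep^{-3}\bigl(1+17\log\gap(P)\bigr)\gap(P)^{-16.9}$, and $\ep^{-3}\leq\gap(P)$.
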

	
	\smallskip
	
	\begin{proof}
		Let $x,y$ be any elements satisfying  $h_{{P}}(y)>h_{{P}}(x)$.
		Let $V:=Z_y-Z_x$.
		Note that  $V$ has mean
		$\Delta(x,y)=h_{P}(y)-h_{P}(x)>0$ and standard deviation $d(x,y)$.
		Applying~\eqref{eq:width-logconcave-tail} with
		$t=\Delta(x,y)/d(x,y)$ gives
		\begin{align*}
			\Pb[Z_y<Z_x] \quad &= \quad   \Pb[V<0]
			\quad \leq \quad
			\Pb[|V-\Delta(x,y)|>\Delta(x,y)] \\
			&\leq \quad  \exp\left(1-\frac{\Delta(x,y)}{d(x,y)}\right).
		\end{align*}
		Applying \eqref{eq:width-variance-distance} then gives us
		\begin{equation}\label{eq:width-reversal-tail}
			\Pb[Z_y<Z_x]
			\quad \leq \quad
			\exp\left(1-\ep\sqrt{\frac{\Delta(x,y)}{\gap({P})}}\right).
		\end{equation}
		
		%
		
		Now,
		by the definition of $E$ and the union bound,
		\[
		q \ \leq \
		\sum_{{h_{{P}}(z)\leq h_{{P}}(u)-\ell}}\Pb[Z_u<Z_z]
		+
		\sum_{{h_{{P}}(z)\geq h_{{P}}(v)+\ell}}\Pb[Z_z<Z_v].
		\]
		We now group the terms in the sums in the following way:
		For the first family, take $(x,y):=(z,u)$; for the second,
		take $(x,y):=(v,z)$.
		Then,
		group the comparisons in each family according to
		\[
		j\gap({P})\leq\Delta(x,y)<(j+1)\gap({P}).
		\]
		For each fixed $j$, this condition places ${h_{{P}}(z)}$ in an interval
		of length $\gap({P})$.
		By the separation of the heights in \eqref{eq:width-separation-2}, such an interval contains
		at most \.
		${1+(\sqrt{3}/\ep)\gap({P})}$ \.
		elements.
		Moreover, \eqref{eq:width-reversal-tail} gives
		\[
		\Pb[Z_y<Z_x]
		\ \leq \
		\exp\left(1-\ep\sqrt{\frac{\Delta(x,y)}{\gap({P})}}\right)
		\ \leq \
		e\,e^{-\ep\sqrt{j}}.
		\]
		Every added comparison satisfies $\Delta(x,y)\geq\ell$,
		so its interval index is at least
		\[
		j_0:=\left\lfloor\frac{\ell}{\gap({P})}\right\rfloor.
		\]
		Summing over these intervals and accounting for both families,
		we obtain
		\[
		q
		\ \leq \
		2\sum_{j\geq j_0}
		\left(1+\frac{\sqrt{3}\gap({P})}{\ep}\right)
		e\,e^{-\ep\sqrt{j}}
		\ = \
		2e\left(1+\frac{\sqrt{3}\gap({P})}{\ep}\right)
		\sum_{j\geq j_0}e^{-\ep\sqrt{j}}.
		\]
		Comparison with an integral gives
		\begin{align*}
			\sum_{j\geq j_0}e^{-\ep\sqrt{j}}
			&\ \leq \ e^{-\ep\sqrt{j_0}}
			+\int_{j_0}^{\infty}e^{-\ep\sqrt{t}}\,dt \\
			&\ = \ \left(1+\frac{2\sqrt{j_0}}{\ep}
			+\frac{2}{\ep^2}\right)e^{-\ep\sqrt{j_0}} \\
			&\ \leq \ \frac{5}{\ep^2}e^{-\ep\sqrt{j_0}/2},
		\end{align*}
		where the last inequality uses $\ep<1$ and
		$(1+t)e^{-t/2}\leq2$ for $t\geq0$.
		{Now note that our largeness assumption gives $R(\log\gap({P}))^2\geq2$, so $j_0\geq R(\log\gap({P}))^2/2$. Consequently,}
		\[
		\sqrt{j_0}
		\quad \geq \quad \sqrt{R/2}\log\gap({P})
		\quad = \quad \frac{12}{\ep}\log\gap({P}).
		\]
		Since $\gap({P})\geq1$ and $\ep<1$, it follows that
		\begin{equation}
			\begin{aligned}
				q
				&\ \leq \ \frac{10e(1+\sqrt{3})}{\ep^3}
				\gap({P}) e^{-\ep\sqrt{j_0}/2} \\
				&\ \leq \ \frac{10e(1+\sqrt{3})}{\ep^3}\gap({P})^{-5}
				\ \leq \ \gap({P})^{-4},
			\end{aligned}
		\end{equation}
		where the last inequality is~\eqref{eq:width-large-gap}. This proves the lemma.
	\end{proof}
	
	\smallskip

	The next lemma bounds $\gap({P})$ from above in terms of
	differences of average heights in $\cQ$.

	\smallskip
	
	\begin{lemma}We have
		\begin{equation}\label{eq:width-preserved-gap}
			{\gap({P})}  \quad \leq \quad 2 \. \big(	\min_{t\in J}h_{\cQ}(t)-\max_{s\in I}h_{\cQ}(s) \big).
		\end{equation}
	\end{lemma}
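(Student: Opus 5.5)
We need $\min_{t\in J}h_\cQ(t)-\max_{s\in I}h_\cQ(s)\ge \gap(P)/2$. The idea is to compare $h_\cQ$ with $h_P$ through the conditioning event $E$. The uniform measure on $\cE(\cQ)$ is exactly the uniform measure on $\cE(P)$ conditioned on $E$, since $\cQ$ is $P$ plus the comparisons defining $E$. So for every $w\in X$,
\[
h_\cQ(w)\;=\;\Eb[\cL(w)\mid E]\;=\;\frac{h_P(w)-\Eb[\cL(w)\mathbf 1_{E^c}]}{1-q}.
\]
Using $1\le \cL(w)\le n$, this gives
\[
|h_\cQ(w)-h_P(w)|\;\le\;\frac{q\,n}{1-q}\;\le\;2qn
\]
whenever $q\le 1/2$. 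Lemma~\ref{lem:q-small} supplies $q\le \gap(P)^{-4}$, which is tiny by \eqref{eq:width-large-gap}.

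The trouble is that this bound carries the factor $n$, and $n$ is not controlled by $\gap(P)$ alone, so the crude estimate is not enough. I would refine it by writing
\[
\Eb[\cL(w)\mathbf 1_{E^c}]-h_P(w)\,q \;=\; \Eb\big[(\cL(w)-h_P(w))\mathbf 1_{E^c}\big].
\]
Cauchy--Schwarz then bounds this by $\sqrt{q}\,\sqrt{\operatorname{Var}\cL(w)}$. To control $\operatorname{Var}\cL(w)$, I compare with the adjoined minimum element $v_1$: since $\cL(v_1)=1$, we have $\operatorname{Var}\cL(w)\approx \operatorname{Var}(Z_w-Z_{v_1})$ up to the difference between the continuous and discrete models, which costs $O(n)$-free lower-order terms. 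By \eqref{eq:width-variance-distance},
\[
\operatorname{Var}(Z_w-Z_{v_1})\;\le\;\frac{\gap(P)\,h_P(w)}{\ep^2}\;\le\;\frac{\gap(P)\,n}{\ep^2}.
\]
This still involves $n$.

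So the key step is really to bound $n$ polynomially in $\gap(P)$, or to avoid $n$ altogether. The alternative I would try first is to work only with differences. The target quantity is
\[
h_\cQ(t)-h_\cQ(s) \;=\; \Eb[\cL(t)-\cL(s)\mid E].
\]
By the same Cauchy--Schwarz argument, $|h_\cQ(t)-h_\cQ(s)-(h_P(t)-h_P(s))|$ is at most
\[
\frac{\sqrt q\,\big(|\Delta(s,t)|+d(s,t)+O(1)\big)}{1-q},
\]
where the $O(1)$ accounts for the discrete-versus-continuous rounding.

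If $s\in I$ and $t\in J$ have $h_P$ values within $\ell$ of $u$ and $v$ respectively, then $\Delta(s,t)\le 2\ell+\gap(P)$, which is polylogarithmic times $\gap(P)$. In that case $\sqrt q\cdot\Delta\le \gap(P)^{-2}\cdot O(\gap(P)\log^2)$, which is $o(1)$ compared with $\gap(P)/2$. Moreover $h_P(t)-h_P(s)\ge \gap(P)$, because $u,v$ are consecutive in the height order. Hence $h_\cQ(t)-h_\cQ(s)\ge \gap(P)-o(1)\ge \gap(P)/2$.

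Far-away elements are handled by the added relations. If $h_P(s)\le h_P(u)-\ell$, then $s\prec_\cQ u$, so $h_\cQ(s)<h_\cQ(u)$ and $s$ is dominated by $u$. Symmetrically, $t$ with $h_P(t)\ge h_P(v)+\ell$ satisfies $h_\cQ(t)>h_\cQ(v)$. So the maximum over $I$ and the minimum over $J$ are attained at elements in the near window, to which the previous estimate applies. The main obstacle I anticipate is the bookkeeping for $d(s,t)$ and for the discrete rounding under conditioning. For the former, \eqref{eq:width-variance-distance} gives $d(s,t)^2\le \gap(P)\Delta/\ep^2$, and multiplying by $\sqrt q\le \gap(P)^{-2}$ makes it negligible, using the largeness assumption \eqref{eq:width-large-gap} to absorb the $\ep^{-1}$ factors and the logarithms.
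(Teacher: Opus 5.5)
Your argument is correct, and at its core it is the paper's argument. Both use the Cauchy--Schwarz bound $|\Eb[V\mid E]-\Eb[V]|\le\sqrt{\operatorname{Var}(V)}\,\sqrt{q/(1-q)}$ for $V=Z_t-Z_s$, with $\operatorname{Var}(V)=d(s,t)^2$ controlled by \eqref{eq:width-variance-distance} and $q\le\gap(P)^{-4}$ by Lemma~\ref{lem:q-small}.

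The difference is how you make the error small relative to $\gap(P)$. You treat the error additively, so you need $\Delta(s,t)$ to be bounded. You get this by using the added relations to show that the maximizer of $h_\cQ$ over $I$ lies in $(h_P(u)-\ell,\,h_P(u)]$ and the minimizer over $J$ lies in $[h_P(v),\,h_P(v)+\ell)$.

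The paper needs no such localization. Since $\Delta(s,t)\ge\gap(P)$ for all $s\in I$, $t\in J$, one has $d(s,t)\le\sqrt{\gap(P)\Delta(s,t)}/\ep\le\Delta(s,t)/\ep$. So the error is at most $\Delta(s,t)/(\ep\sqrt{\gap(P)^4-1})\le\Delta(s,t)/2$. This gives $h_\cQ(t)-h_\cQ(s)\ge\Delta(s,t)/2\ge\gap(P)/2$ for every pair at once. The added relations are used only afterwards, to localize $B_0,B_1$.

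Your route also closes, but your ``$o(1)$'' hides a real check. With $\Delta\le\gap(P)+2\ell$ you need roughly $\gap(P)^2\ge C(\log\gap(P))^2/\ep^2$ for an explicit constant $C$ in the thousands. This does follow from \eqref{eq:width-large-gap} together with $\ep<e^{-1}$, but it should be verified.

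Two smaller points:
\begin{enumerate}
\item The $|\Delta(s,t)|$ term in your error bound is superfluous, since Cauchy--Schwarz is applied to the centered variable $V-\Eb[V]$. Dropping it makes the numerical check much easier.
\item There is no discrete rounding issue. Conditioning $\mathbf F$ on $E$ gives the uniform measure on $\cO(\cQ)$, so $h_\cQ(w)=\Eb[Z_w\mid E]$ and you can work with $Z$ directly. Alternatively, $\cL(w)=\Eb[Z_w\mid \cL]$, so $\operatorname{Var}(\cL(t)-\cL(s))\le d(s,t)^2$ with no $O(1)$ correction.
\end{enumerate}
The opening discussion with the factor $n$ can simply be dropped.
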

	
	\smallskip
	
	\begin{proof}
		Note that
		conditioning $\mathbf{F}$ on $E$ makes it uniform on $\cO(\cQ)$.
		The ground set is unchanged, so the normalization
		\. $Z_x=(n+1)F_x$ \. is the same for both posets.
		In particular,
		\[
		h_{\cQ}(x) \quad = \quad \Eb[Z_x\mid E].
		\]
		For any square-integrable random variable $V$,
		Cauchy--Schwarz gives
		\begin{equation}\label{eq:width-conditional-mean}
			\bigl|\Eb[V\mid E]-\Eb[V]\bigr|
			\quad = \quad
			\frac{|\operatorname{Cov}(V,\mathbf{1}_E)|}{1-q}
			\quad \leq \quad
			\sqrt{\operatorname{Var}(V)}\sqrt{\frac{q}{1-q}}.
		\end{equation}
		Now let  \. $V:=Z_t-Z_s$ \. for $s\in I$ and $t\in J$.
		{Since $s\in I$ and $t\in J$, we have}
		\[
		{\Delta(s,t)=h_{{P}}(t)-h_{{P}}(s)
			\geq h_{{P}}(v)-h_{{P}}(u)=\gap({P}).}
		\]
		We then have
		\begin{align*}
			& h_{\cQ}(t)-h_{\cQ}(s) \quad = \quad  \Eb[V\mid E]
			\quad   \geq \quad \Delta(s,t) - \sqrt{\operatorname{Var}(V)}\sqrt{\frac{q}{1-q}} \\
			& \quad \geq \quad \Delta(s,t) -\frac{\sqrt{\gap({P})\Delta(s,t)}}{\ep}
			\sqrt{\frac{q}{1-q}}  \quad  \geq \quad  \Delta(s,t) \left(
			1-\frac{1}{\ep}\sqrt{\frac{q}{1-q}}\right) \\
			&\quad \geq \quad
			\Delta(s,t)\left(
			1-\frac{1}{\ep}\frac{1}{\sqrt{\gap({P})^4-1}} \right) \quad \geq \quad
			\frac{\Delta(s,t)}{2}
			\quad \geq \quad  \frac{\gap({P})}{2}\textcolor{red}{,}
		\end{align*}
		where the first inequality is due to  \eqref{eq:width-conditional-mean},
		the second inequality is due to  \eqref{eq:width-variance-distance},
		the third inequality  and sixth inequality are because $\Delta(s,t) \geq \gap({P})$,
		the fourth inequality is because of \eqref{eq:width-conditioning-error},
		and the fifth inequality is because our largeness assumption~\eqref{eq:width-large-gap} implies
		$\gap({P})^4-1\geq4/\ep^2$.
		This completes the proof.
	\end{proof}

	\smallskip
	
	\subsection{Putting everything together}\label{ss:width-completion}
	
	Let
	\[
	B_0 \ := \ \operatorname{max}_{\cQ}(I),
	\qquad
	B_1 \ := \ \operatorname{min}_{\cQ}(J).
	\]
	The added comparisons ensure that these two sets lie near
	the original gap.
	Indeed, every element $z\in I$ with
	\. $h_{{P}}(z)\leq h_{{P}}(u)-\ell$ \. lies below $u$ in $\cQ$,
	and hence cannot be maximal in $I$.
	The analogous observation applies to $J$, so
	\begin{align*}
		h_{{P}}(B_0)
		&\ \subseteq \ \bigl(h_{{P}}(u)-\ell,\,h_{{P}}(u)\bigr], \\
		h_{{P}}(B_1)
		&\ \subseteq \ \bigl[h_{{P}}(v),\,h_{{P}}(v)+\ell\bigr).
	\end{align*}
	Both sets are antichains in $\cQ$, and therefore also in ${P}$.
	Applying Lemma~\ref{lem:width-ideal-gap} in $\cQ$
	and using~\eqref{eq:width-preserved-gap}, we obtain
	\[
	|B_0|+|B_1|-1 \quad \geq \quad \frac{\gap({P})}{2}.
	\]
	Choose the larger of $B_0,B_1$, and call it $B$.
	Then \. $|B|\geq\gap({P})/4$ \. and \. $H_{{P}}(B)\leq\ell$\..
	Since~\eqref{eq:width-large-gap} gives $\gap({P})>8$,
	we have $|B|\geq2$.
	Applying~\eqref{eq:width-antichain-span} in the original poset
	${P}$ gives
	\[
	\frac{\gap({P})^2}{16}
	\quad \leq \quad |B|^2
	\quad \leq \quad \frac{4\sqrt{3}}{\ep}H_{{P}}(B)
	\quad \leq \quad
	\frac{4\sqrt{3}R}{\ep}\gap({P})(\log\gap({P}))^2.
	\]
	Consequently,
	\[
	\gap({P})
	\quad \leq \quad \frac{64\sqrt{3}R}{\ep}(\log\gap({P}))^2
	\quad = \quad \frac{18432\sqrt{3}}{\ep^3}(\log\gap({P}))^2.
	\]
	{These facts and the preceding inequality imply}
	\[
	\gap({P})
	\ \leq\
	\frac{73728\sqrt{3}}{\ep^3}
	\left(\log\frac{18432\sqrt{3}}{\ep^3}\right)^2.
	\]
	This bounds $\gap({P})$ in terms of $\ep$.
	Equation~\eqref{eq:width-from-gap} then bounds $w({P})$
	in terms of $\ep$ as well,
	\[
	w({P})
	\ \leq\ \frac{2\sqrt{3}}{\ep}\gap({P})
	\ \leq\
	\frac{442368}{\ep^4}
	\left(\log\frac{18432\sqrt{3}}{\ep^3}\right)^2.
	\]
	{Taking $\ep=10^{-100}$ and any integer $K$ exceeding the displayed width bound, 
		finishes the proof of Theorem~\ref{thm:Aires}.}
	\qed
	
	\medskip
	\section{Final remarks}	\label{s:finrem}
	
	\subsection{}\label{ss:finrem-Rota}
	To give a quick summary, the proof of Theorem~\ref{thm:main} draws on 
	{log-concavity} arising from
	deep results in convex geometry~\cite{Sta-log-concave,KS84,BFT95},
	combinatorial counting arguments in the spirit of~\cite{CPP-quant},
	and Shepp's XYZ correlation inequality~\cite{She82}.
	These are combined with the geometry of order polytopes,
	the window estimates in~\cite{AK25a},
	probabilistic conditioning, and Haqi's recent proof of the
	Haqi--Kahn ideal inequality~\cite{Haq26}.
	
	It is perhaps somewhat surprising to see so many different tools and techniques
	from across the area come together in the proof of such a small numerical
	improvement to break the infinite barrier.
	This brings to mind the following quote by Gian-Carlo Rota, from one 
	of his last publications:
	
	\begin{center}\begin{minipage}{13.25cm}%
			{{\em ``The solution $[...]$ always comes as
					an unexpected application of theories that were previously developed without a
					specific purpose, theories whose effectiveness was at first thought to be highly
					questionable.''} \cite{Rota98}.}
	\end{minipage}\end{center}
	
	
	\subsection{}\label{ss:finrem-AK}

	The proof of Theorem~\ref{thm:AK} in \cite{AK25b} combines two key ideas: (1) every poset of large range
	must contain an element $x$ whose position $f(x)$ in a uniformly random
	linear extension has large variance, and (2) every such element in a poset
	of bounded width belongs to a relatively balanced pair.
	The first assertion is by far the harder to establish: an element
	with large range need not be one whose position has large variance.
	
	Nevertheless, such an element can be found in the union of a maximal
	pair of sets with no comparabilities between them. The proof relies
	on a reduction to a special class of posets of width two, followed
	by a careful analysis of their linear extensions. For the second
	assertion, repeated applications of the $XYZ$ inequality~\cite{She82} show that
	an element in a poset of bounded width that belongs to no relatively
	balanced pair must have bounded variance.
	Both parts make use of various correlation and log-concavity inequalities.

	

	\subsection{}\label{ss:finrem-1323}
	While the $\frac{1}{3}$--$\frac23$ \. Conjecture may seem like an 
	ultimate result in the area, the \defn{Kahn--Saks Conjecture} \. $\de(P)\to \frac{1}{2}$ \. 
	as \. $w(P)\to \infty$ \. resolved in \cite{Air26}, is much more natural.  
	Indeed, it was shown by Aigner \cite{Aig85} that for width $2$ posets the equality \. 
	$\de(P)=\frac{1}{3}$ \. holds only for a $3$-element 
	nonchain poset and posets containing it as a direct sum.  
	See also \cite{Chen18,Sah21} for variations on this result.  
	In fact, one would want to prove the polynomial rate of convergence \. 
	$\de(P)\to \frac{1}{2}$ \. as the last three authors do in \cite{CPP-sort,CPP-Cat}.  

	\subsection{}\label{ss:finrem-hope}
	We expect---and sincerely hope---that the bound established in 
	this paper will soon be surpassed.  While we would be astonished
	to see a complete resolution of the \. $\frac{1}{3}$--$\frac23$ \. Conjecture, 
	perhaps one could expect an improvement by, say, \ts $0.001$, 
	rather than by a microscopic amount in \eqref{eq:ep}.  Clearly, new tools and
	fresh ideas are needed to obtain any such bound, especially given how slow 
	the progress has been until this point. 
	

	\vskip.6cm
	
	{\small
		
		\subsection*{Use of AI}  
		In preparation of the paper, the authors used AI extensively (several different models) 
		in the proofs of several technical lemmas, and as a way to compute the constant in \eqref{eq:ep}. 
		We also used it for light editing, reference check and various computations, some of which
		later needed to be redone by hand.  As always, the authors take full responsibility for 
		the content of the paper in its entirety. 
		
		\subsection*{Acknowledgements}
		We are grateful to Jeff Kahn, J\'{a}nos~Koml\'{o}s,  and Mike Saks for many helpful discussions and remarks on the
		subject.  
		Swee Hong Chan, Igor Pak  and Greta Panova were partially  supported
		by the National Science Foundation.
	}



	

	\medskip	
	\appendix
	
	\section{Proof of Lemma~\ref{lem:BFT-C}}\label{s:BFT-C}
	We reproduce the proof of Brightwell--Felsner--Trotter~\cite[\S6]{BFT95}, making the constant $0.2764$ explicit for use in this paper.
	The proof in fact yields the stronger lower bound $67/242\approx 0.27686$ for Lemma~\ref{lem:BFT-C}, but this improvement is not needed for the purposes of this paper.
	
	We prove Lemma~\ref{lem:BFT-C} by contradiction.
	Suppose to the contrary that
	\[
	\delta(P;x,y) \quad \leq \quad 0.2764,
	\qquad
	\delta(P;y,z) \quad \leq \quad 0.2764.
	\]
	Set
	\[
	p \ := \ \Pb[ \cL(y) \. < \. \cL(x)],
	\qquad
	p' \ := \ \Pb[ \cL(z) \. < \. \cL(y)].
	\]
	Since $h(x)\leq h(y)\leq h(z)$, Lemma~\ref{lem:KL} gives
	\[
	1-p \quad \geq \quad e^{-1} \quad > \quad 0.2764,
	\qquad
	1-p' \quad \geq \quad e^{-1} \quad > \quad 0.2764.
	\]
	The  assumption at the beginning of the proof therefore implies \. $0<p,p'\leq 0.2764$\..
	
	Let $t,t'\in(0,1]$ be
	\[
	t \ := \ \frac{\Pb[\cL(x)=\cL(y)+1]}{p},
	\qquad
	t' \ := \ \frac{\Pb[\cL(y)=\cL(z)+1]}{p'}.
	\]
	By~\cite[Lem.~6.2]{BFT95},
	\begin{equation*}
		\Pb[\cL(x)-\cL(y) \geq 2] \  + \   \Pb[\cL(y)-\cL(z) \geq 2]  \ \geq \ \frac{1}{11},
	\end{equation*}
	which in our notation is equivalent to
	\begin{equation}\label{eq:BFT-C-adjacency}
		p(1-t)+p'(1-t') \quad \geq \quad \frac{1}{11}.
	\end{equation}

	We recall the definition of the packed height function from~\cite[Section~2]{BFT95}.
	Fix \. $q \in (0,0.2764]$ \. and \. $u\in (0,1]$\..
	A pair of nonnegative sequences $(a_i)_{i\geq1}$ and $(b_i)_{i\geq1}$ is called \defnb{packed} with parameters $(q,u)$ if, for some integer $k\geq1$,
	\[
	b_i \ = \ qu(1-u)^{i-1} \quad (i\geq1),
	\qquad
	a_i \ = \ qu(1+u)^{i-1} \quad (1\leq i\leq k),
	\]
	and
	\[
	a_{k+1}+a_{k+2} \ = \ 1-q-\sum_{i=1}^{k}a_i,
	\qquad
	a_i \ = \ 0 \quad (i>k+2),
	\]
	with one of the following two conditions:
	\begin{enumerate}[label=\textnormal{(\roman*)}]
		\item $k\geq2$, \. $a_{k+2}=0$, \. and
		\[
		\frac{u}{1+u} \quad \leq \quad \frac{a_{k+1}}{a_k}
		\quad \leq \quad 1.
		\]
		\item $a_{k+1}=a_k+a_{k+2}$ \. and
		\[
		1 \quad \leq \quad \frac{a_{k+1}}{a_k}
		\quad \leq \quad 1+u.
		\]
	\end{enumerate}
	Note that these sequences satisfy  \. $\sum_{i\geq1}b_i=q$ \. and \. $b_1/q=u$.
	For each $(q,u)$ in this range, it was shown in \cite[Sec.~3]{KS84} that there is a unique pair of sequences of this form.
	The \defnb{packed height function} is defined by
	\[
	H(q,u) \quad := \quad \sum_{i\geq1} i(a_i-b_i).
	\]
	The formulas in~\cite[(2.9)--(2.10)]{BFT95} give
	\begin{equation}\label{eq:H-cases}
		H(q,u) \ = \
		\begin{cases}
			\displaystyle
			k+1-\frac{q(1+u)^{k+1}}{u},
			& \text{in case (i)}, \\[8pt]
			\displaystyle
			k+\frac{3}{2}
			-\frac{q(1+u)^{k-1}(4u^2+5u+2)}{2u},
			& \text{in case (ii)}.
		\end{cases}
	\end{equation}

	\smallskip

	\begin{claim}\label{lem:Height}
		For  any $q \in (0,0.2764]$ and $u \in (0,1]$,
		\[
		H(q,u) \quad \geq \quad H(q,1)
		\quad \geq \quad \frac{5-11q}{2}.
		\]	
	\end{claim}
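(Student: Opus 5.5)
The plan is to prove the two inequalities separately: the right-hand one by computing $H(q,1)$ in closed form, and the left-hand one by a monotonicity argument in $u$. Both rely only on the explicit formulas \eqref{eq:H-cases} and on the fact, recalled above from \cite[Sec.~3]{KS84}, that the packed pair exists and is unique for each $(q,u)$.

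\emph{Right inequality.} At $u=1$ we have $b_1=q$ and $b_i=0$ for $i\ge2$, while $a_i=q\,2^{i-1}$ for $i\le k$, so $\sum_{i\le k}a_i=q(2^k-1)$. First take $k=1$ in case (ii). Then $a_1=q$ and $a_2+a_3=1-2q$, and the condition $a_2=a_1+a_3$ gives $a_2=(1-q)/2$. The requirement $1\le a_2/a_1\le 2$ then becomes $q\in[\tfrac15,\tfrac13]$. On this range \eqref{eq:H-cases} gives
\[
H(q,1)\ =\ \tfrac52-\tfrac{11}{2}\,q,
\]
so equality holds for all $q\in[\tfrac15,0.2764]$.

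For $q<\tfrac15$ the index $k$ grows, and the cases alternate as $q$ decreases: (i) with $k=2$, then (ii) with $k=2$, then (i) with $k=3$, and so on. On each piece $H(q,1)$ is affine in $q$. The slope is $-2^{k}$ in case (i) and $-\tfrac{11}{4}\,2^{k}$ in case (ii), so the successive slopes are $-\tfrac{11}{2},-8,-11,-16,\dots$, which become steeper as $q$ decreases.

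I will check that $H(q,1)$ is continuous at each breakpoint, where the ratio $a_{k+1}/a_k$ hits an endpoint of its allowed interval and the two descriptions give the same sequences. It then follows that $q\mapsto H(q,1)$ is a continuous, piecewise affine, convex function on $(0,0.2764]$. A convex function lies above the affine extension of any of its linear pieces. Applying this to the piece on $[\tfrac15,0.2764]$ gives $H(q,1)\ge(5-11q)/2$ throughout.

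\emph{Left inequality.} I would first look for the statement that the packed height is minimized at $u=1$ in \cite[Sec.~3]{KS84} or \cite[Sec.~2]{BFT95}, since this is essentially how packed sequences are used there, and cite it if it is there. If I have to prove it, I fix $q$ and show that $u\mapsto H(q,u)$ is nonincreasing on $(0,1]$. On each region where $k$ and the case are fixed, I differentiate \eqref{eq:H-cases} in $u$. For case (i) the derivative is
\[
\frac{\partial H}{\partial u}\ =\ -\frac{q(1+u)^{k}\,(ku-1)}{u^2},
\]
which is $\le0$ because $k\ge2$ and the admissible $u$ satisfy $ku\ge1$. Case (ii) is handled similarly, using the constraints $1\le a_{k+1}/a_k\le 1+u$ to restrict $u$.

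Across region boundaries, $H$ is continuous in $u$ by the same consistency of the packed sequences at the endpoints of the ratio constraints. So piecewise monotonicity gives global monotonicity, and hence $H(q,u)\ge H(q,1)$.

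\emph{Main obstacle.} I expect the hard step to be the monotonicity in $u$: tracking exactly which $(k,\text{case})$ occurs in which region of the $(q,u)$-plane, and verifying the sign of the derivative in case (ii) over those regions. The $u=1$ computation, by contrast, is routine bookkeeping.
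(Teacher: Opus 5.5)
Your proof is correct. For the left inequality, your primary plan is the same as the paper's: it simply cites the monotonicity lemma \cite[Lem.~2.3]{BFT95}.

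\textbf{The $q$-range in the left inequality.} When you cite the lemma, check its hypothesis $q<1-1/\sqrt{2}\approx 0.293$. Monotonicity in $u$ really does depend on the range of $q$: the same formulas with $q=0.4$, case (i), $k=2$, give $\partial H/\partial u>0$ for $u$ slightly below $1/2$.

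Your fallback uses this range in two places you leave implicit.
\begin{itemize}
\item In case (i) your assertion $ku\ge 1$ is true but needs justification. If $u<1/k$, then $q(1+u)^{k-1}(1+2u)<3q<1$, which violates the case (i) constraint $a_{k+1}\le a_k$.
\item In case (ii) with $k=1$ you have $\partial H/\partial u=-q(2-u^{-2})$. The constraint $q(1+2u+2u^2)\ge 1$ forces $u\ge 1/\sqrt{2}$ exactly when $q\le 1-1/\sqrt{2}$. This is where the threshold in \cite[Lem.~2.3]{BFT95} comes from.
\end{itemize}

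\textbf{The right inequality: a different route.} The paper never computes $H(q,1)$ for $q<1/5$. It only uses that $k\ge 2$ there, so $a_1=b_1=q$ and $a_2=2q$, with the remaining mass $1-4q$ sitting at indices $\ge 3$. This gives $H(q,1)\ge 3-8q\ge (5-11q)/2$ for $q\le 1/5$.

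You instead determine $H(q,1)$ exactly as a continuous piecewise-affine function:
\begin{itemize}
\item on $[1/(5\cdot 2^{k-1}),\,1/(3\cdot 2^{k-1})]$ it is given by the case (ii) formula with index $k$;
\item on $[1/(3\cdot 2^{k}),\,1/(5\cdot 2^{k-1})]$ it is given by the case (i) formula with index $k+1$.
\end{itemize}
The slopes $-\tfrac{11}{4}2^k$, $-2^{k+2}$, $-\tfrac{11}{2}2^k$ steepen as $q$ decreases, so convexity places $H(\cdot,1)$ above the extension of its top piece.

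This is valid and yields more than the paper's bound: $(5-11q)/2$ is a supporting line of $H(\cdot,1)$, with equality for $q\ge 1/5$. The price is that you must check infinitely many pieces, continuity at every breakpoint, and the slope ordering for all $k$. The paper's mass-pushing estimate takes two lines and is all that is needed.

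\textbf{One slip to fix.} At $u=1$, case (i) gives $H=k+1-2^{k+1}q$, so the slope is $-2^{k+1}$, not $-2^k$. Your listed values $-8,-16$ are the correct ones. With $-2^k$ the slopes would not be monotone, and the convexity argument would fail.
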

	
	\smallskip
	
	\begin{proof}
		{The first inequality follows from}
		the monotonicity lemma in~\cite[Lem.~2.3]{BFT95} (note that  we invoke $0.2764<1-1/\sqrt{2} \approx 0.293$ to use this lemma).
		We now prove the second inequality.
		First, for  $q\geq 1/5$, the packed sequence at $u=1$ is in case~(ii) with $k=1$. Thus the formula for $H$ gives
		\begin{equation}\label{eq:Hyatt-evil}
			H(q,1)
			\quad = \quad
			1+\frac{3}{2}-\frac{q(4+5+2)}{2}
			\quad = \quad
			\frac{5-11q}{2}.
		\end{equation}

		Now suppose that $0<q<1/5$.
		At $u=1$, the definition of a packed sequence gives
		\[
		b_1=q,\qquad b_i=0 \quad (i\geq2),
		\qquad a_1=q,\qquad a_2=2q.
		\]
		Since $\sum_{i\geq1}a_i=1-q$, the remaining positive mass is
		\[
		\sum_{i\geq3}a_i \quad = \quad 1-2q-a_2.
		\]
		Every term in this remaining mass has index at least three. It thus follows from the two equations above that
		\begin{align*}
			&H(q,1)
			\ = \ \sum_{i\geq1} i (a_i-b_i) \ = \   \sum_{i\geq1} i a_i-q  \ \geq \ q+2a_2+3(1-2q-a_2)-q \\
			& \quad \ = \ 3-6q-a_2 \ = \ 3-8q \ \geq \ \frac{5-11q}{2},
		\end{align*}
		where the last inequality uses the assumption that $q < 1/5$. This completes the proof.
	\end{proof}

	\smallskip

	{The packing lemma~\cite[Lem.~2.2]{BFT95} gives}
	\begin{equation}\label{eq:packing}
		2 \quad \geq \quad h(z)-h(x)
		\quad \geq \quad H(p,t)+H(p',t').
	\end{equation}
	It then follows from Claim~\ref{lem:Height} that
	\[
	p+p' \  \geq \ \frac{6}{11} \quad \text{ and thus } \quad
	\frac{6}{11}-0.2764 \ \leq \ p,p' \ \leq \ 0.2764.
	\]
	In particular, both $p$ and $p'$ lie in $(1/5,\,3/10)$, so
	\[
	H(p,1) \ = \ \frac{5-11p}{2},
	\qquad
	H(p',1) \ = \ \frac{5-11p'}{2}.
	\]

	For $q\in\{p,p'\}$, let $v_q$ be the positive solution of
	\[
	(1+v_q)(1+2v_q) \quad = \quad \frac{1}{q}.
	\]
	The bounds on $q$ imply \. $3/5<v_q<1$\..
	Suppose first that $t\leq v_p$ or $t'\leq v_{p'}$, say $t\leq v_p$.
	Writing $v:=v_p$,
	we then have
	\[
	(1+v)(1+2v) \ = \ \frac{1}{p},
	\qquad\text{so}\qquad
	p \ = \ \frac{1}{(1+v)(1+2v)}.
	\]
	This implies
	\[
	H(p,v) \ = \ 3-\frac{p(1+v)^3}{v}, \qquad  H(p,1) \ = \  \frac{5-11p}{2},
	\]
	{where the first equality holds at the boundary of case~(i) with $k=2$},
	and the second is because of \eqref{eq:Hyatt-evil} (recall that  $p \geq 1/5$).
	It then follows from
	{the monotonicity lemma~\cite[Lem.~2.3]{BFT95}}  that
	\begin{equation}\label{eq:Hyatt-1}
		H(p,t)-H(p,1)
		\quad \geq \quad H(p,v)-H(p,1)
		\quad = \quad
		\frac{-3v^2+6v-2}{2v(1+v)(1+2v)}
		\quad \geq \quad \frac{1}{12},
	\end{equation}
	where in the last inequality we use the fact that $1/2 \leq v \leq 1$.
	Consequently,
	\begin{align*}
		&h(z)-h(x)
		\quad \geq \quad H(p,t)+H(p',t') \\
		&\quad  \geq \quad  H(p,1)+H(p',1)+\frac{1}{12}  \quad  = \quad \frac{5-11p}{2}+\frac{5-11p'}{2}+\frac{1}{12}\\
		&  \quad  \geq \quad  5-11\cdot 0.2764+\frac{1}{12} \quad > \quad 2,
	\end{align*}
	where the first inequality uses \eqref{eq:packing}, the second inequality uses \eqref{eq:Hyatt-1},
	and the equality uses \eqref{eq:Hyatt-evil}\textcolor{red}{.}
	This contradicts  the assumption that $(x,y,z)$ is a BFT triple.
	
	We may therefore assume that \. $t>v_p$ \. and \. $t'>v_{p'}$\..
	\smallskip

	\begin{claim}Assume  that \. $t>v_p$ \. and \. $t'>v_{p'}$\..
		Then
		\begin{equation}\label{eq:BFT-C-gain}
			H(p,t)-H(p,1) \quad \geq \quad \frac{1}{2}p(1-t),
		\end{equation}
		and likewise for $p',t'$.
	\end{claim}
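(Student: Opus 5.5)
We work on the range $t\in(v_p,1]$. The idea is to find explicit closed forms for $H(p,t)$ on this range, using \eqref{eq:H-cases}, and then reduce \eqref{eq:BFT-C-gain} to elementary one-variable inequalities. The statement for $(p',t')$ is identical, so we treat $(p,t)$ only. Recall that $1/5<p\le 0.2764$, that $H(p,1)=\frac{5-11p}{2}$ by \eqref{eq:Hyatt-evil}, and that $p=\frac{1}{(1+v_p)(1+2v_p)}$.

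\emph{Step 1 (which packed case applies).} For $u\in(0,1]$ we write the packed sequence explicitly and find its case.
\begin{itemize}
\item Case~(ii) with $k=1$. Here $a_1=pu$, $a_2=a_1+a_3$ and $a_1+a_2+a_3=1-p$. Hence $a_2=\frac{1-p}{2}$ and $a_3=\frac{1-p}{2}-pu$. The constraint $a_2/a_1\le 1+u$ becomes $p(2u^2+2u+1)\ge 1$. The constraint $a_2/a_1\ge1$ holds automatically for $p<1/3$.
\item Let $w=w_p\in(0,1)$ be the positive root of $2w^2+2w+1=1/p$. Since $2u^2+2u+1<(1+u)(1+2u)$, we have $v_p<w_p$.
\item For $u\in[w_p,1]$ we are in case~(ii) with $k=1$.
\item For $u\in(v_p,w_p)$ we should be in case~(i) with $k=2$. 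There $a_1=pu$, $a_2=pu(1+u)$ and $a_3=1-p-a_1-a_2$. The lower ratio bound $a_3/a_2\ge u/(1+u)$ is equivalent to $p(1+u)(1+2u)\ge 1$, which holds because $u>v_p$; this is exactly the boundary already used in \eqref{eq:Hyatt-1}. The upper bound $a_3\le a_2$ is equivalent to $p(2u^2+2u+1)\le1$, i.e.\ $u\le w_p$.
\end{itemize}
I would double-check this bookkeeping against the uniqueness statement of \cite[Sec.~3]{KS84}. It is the step most likely to hide an error, although the two boundaries matching $v_p$ and $w_p$ is reassuring.

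\emph{Step 2 (the range $t\ge w_p$).} By \eqref{eq:H-cases} with $k=1$, $H(p,t)=\frac52-\frac{p(4t^2+5t+2)}{2t}$. A short computation gives
\[
H(p,t)-H(p,1)\;=\;\frac{p}{2}\Bigl(6-4t-\frac{2}{t}\Bigr)\;=\;\frac{p(1-t)(2t-1)}{t}.
\]
This is at least $\frac12p(1-t)$ exactly when $t\ge 2/3$. Since $p\le 0.2764$, we have $2w_p^2+2w_p+1\ge 1/0.2764>3.6$, which gives $w_p>0.75>2/3$. So this range is done.

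\emph{Step 3 (the range $v_p<t<w_p$).} Here $H(p,t)=3-\frac{p(1+t)^3}{t}$, and the desired inequality becomes
\[
\Phi(p,t)\;:=\;\frac12-p\Bigl(\frac{(1+t)^3}{t}-\frac{11}{2}+\frac{1-t}{2}\Bigr)\;\ge\;0 .
\]
The function $\Phi$ is affine in $p$. The constraint $t>v_p$ is equivalent to $p>\frac{1}{(1+t)(1+2t)}$, and $t<w_p$ is equivalent to $p<\frac{1}{2t^2+2t+1}$. Together with $p\in(1/5,0.2764]$, this confines $p$ to an interval whose endpoints are explicit functions of $t$. It therefore suffices to check $\Phi\ge0$ at those endpoints, giving finitely many single-variable rational inequalities on $t\in[0.6,0.76]$.
\begin{itemize}
\item At $p=\frac{1}{2t^2+2t+1}$, continuity with Step~2 gives $\Phi=\frac{p(1-t)(2t-1)}{t}-\frac{p(1-t)}{2}\ge0$ since $t>2/3$ there.
\item At $p=\frac{1}{(1+t)(1+2t)}$, the value matches the computation in \eqref{eq:Hyatt-1}. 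That quantity is at least $1/12$, while $\frac12p(1-t)\le \frac12\cdot 0.28\cdot 0.4<1/12$.
\end{itemize}
The main obstacle is making Step~3 airtight. One must confirm that the bracket is positive, so that the worst case is the largest admissible $p$. One must also handle the interplay of the three upper bounds on $p$. I would first try to show that the bracket is positive on $[0.6,0.8]$ by direct estimation; the remaining endpoint checks are then routine polynomial inequalities.
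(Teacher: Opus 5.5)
Your case split, closed forms and Step~2 match the paper's: your $\Phi$ equals the paper's $\tfrac12\bigl(1-p\,g(t)\bigr)$ with $g(t)=\tfrac2t-4+5t+2t^2$. Step~3, however, has a real hole. Your two endpoint checks use only $1/((1+t)(1+2t))<p<1/(2t^2+2t+1)$. At the upper endpoint, $1-p\,g(t)=\frac{(1-t)(3t-2)}{t(2t^2+2t+1)}$, which is \emph{negative} for $t<2/3$. Such $t$ do occur in case~(i): for example, $p=0.2764$ and $t=0.65$ give $v_p\approx0.618<t<w_p\approx0.749$. Your justification ``$t>2/3$ there'' is borrowed from Step~2. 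That step applies only when $1/(2t^2+2t+1)$ is itself an admissible value of $p$, i.e.\ at most $0.2764$, i.e.\ $t\ge w_{0.2764}\approx0.7486$. So your two checks prove the claim for $t\ge 2/3$. This is the paper's identity $1+2t+2t^2-g(t)=(1-t)(3t-2)/t$ combined with $1/p>1+2t+2t^2$. For $t\in(v_p,2/3)$, however, you must use $p\le 0.2764$ directly and show $g(t)\le 1/0.2764$. This is exactly the step you leave as ``the main obstacle.''

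The missing check is where the hypothesis $t>v_p$ is really needed, since $g(t)\to\infty$ as $t\to 0$. From $p\le0.2764$ you get $(1+\tfrac35)(1+\tfrac65)=3.52<1/p$, hence $v_p>3/5$. On $(3/5,2/3)$, bounding the terms separately gives $g(t)<\tfrac{10}{3}-4+\tfrac{10}{3}+\tfrac89=\tfrac{32}{9}<1/0.2764$; this is how the paper finishes. Your worry about the sign of the bracket is settled at once: $2/t+5t\ge 2\sqrt{10}>4$, so $g>0$. Then the lower-endpoint comparison with \eqref{eq:Hyatt-1} is superfluous, because $\Phi(0,t)=\tfrac12$ and $\Phi$ is affine in $p$.

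There are also two minor slips.
\begin{itemize}
\item In Step~1 the ratio conditions are swapped: $a_3\ge \frac{u}{1+u}a_2$ is equivalent to $p(2u^2+2u+1)\le 1$, while $a_3\le a_2$ is equivalent to $p(1+u)(1+2u)\ge 1$. The resulting ranges are nevertheless correct.
\item The bound $w_p>0.75$ fails at $p=0.2764$, where $w_p\approx0.7486$. What you actually use, $w_p>2/3$, does hold, since $29/9<1/0.2764$.
\end{itemize}
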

	
	\smallskip
	
	\begin{proof}
		
		We start with the case  \. $1/p\leq1+2t+2t^2$ \..
		This hypothesis places $(p,t)$ in packed case~(ii) with $k=1$.
		Hence
		\[
		H(p,t) \ = \ \frac{5}{2}-\frac{p(4t^2+5t+2)}{2t}, \quad  H(p,1) \ = \ \frac{5-11p}{2}\textcolor{red}{,}
		\]
		which in turn implies
		\begin{align*}
			H(p,t)-H(p,1)
			\ = \ p(1-t)\left(2-\frac{1}{t}\right).
		\end{align*}
		Moreover, the inequalities \. $1/p\leq1+2t+2t^2$ \. and $p\leq 0.2764$ imply that \. $(2-1/t) > 1/2$\..
		Thus \eqref{eq:BFT-C-gain} holds in this case.
		
		We now deal with the case  \. $1/p>1+2t+2t^2$\..
		Note that this implies
		\[ 1+2t+2t^2 \quad < \quad \frac{1}{p} \quad < \quad (1+t)(1+2t), \]
		where the right inequality is because  of the assumption that \. $t>v_p$\..
		These inequalities then put us in packed case~(i) with $k=2$.
		The formula for case~(i) in~\eqref{eq:H-cases} therefore gives
		\[
		H(p,t) \quad = \quad 3-\frac{p(1+t)^3}{t}.
		\]
		Set
		\[
		g(t) \ := \ \frac{2}{t}-4+5t+2t^2.
		\]
		A direct calculation gives
		\[
		H(p,t)-H(p,1)-\frac{1}{2}p(1-t)
		\quad = \quad \frac{1-p \. g(t)}{2}.
		\]
		If $t\geq2/3$, then
		\[
		1+2t+2t^2-g(t)
		\quad = \quad \frac{(1-t)(3t-2)}{t}
		\quad \geq \quad 0,
		\]
		so $g(t)\leq1/p$.
		If $3/5<t<2/3$, bounding the terms of $g(t)$ separately gives
		\[
		g(t) \quad < \quad \frac{32}{9}
		\quad < \quad \frac{1}{0.2764}
		\quad \leq \quad \frac{1}{p}.
		\]
		This proves~\eqref{eq:BFT-C-gain} in both scenarios.
		The same argument applies to $p',t'$.
	\end{proof}
	\smallskip

	We now have
	\begin{align*}
		h(z)-h(x)
		&\ \geq \ H(p,1)+H(p',1)
		+\frac{1}{2}\bigl(p(1-t)+p'(1-t')\bigr) \\
		&\ \geq \ 5-\frac{11}{2}(p+p')+\frac{1}{22} \\
		&\ \geq \ 5-11\cdot 0.2764+\frac{1}{22}
		\ > \ 2\textcolor{red}{,}
	\end{align*}
	where the first inequality is due to \eqref{eq:packing} {and}~\eqref{eq:BFT-C-gain},
	and the second inequality is due to \eqref{eq:Hyatt-evil} and \eqref{eq:BFT-C-adjacency}.
	This contradicts the defining inequality for a BFT triple and completes the proof.
	
	\qed
	
	\medskip


{\footnotesize
		
		\begin{thebibliography}{abcdefghi}
			
			\bibitem[AD78]{AD78}
			Rudolf~Ahlswede and David~E.~Daykin,
			An inequality for the weights of two families of sets, their unions and intersections,
			\emph{Z.~Wahrsch.\ Verw.\ Gebiete}~\textbf{43} (1978), 183--185.
			
			\bibitem[Aig85]{Aig85}
			Martin~Aigner, A note on merging, \emph{Order} \textbf{2} (1985), 257--264. 
			
			\bibitem[Air26]{Air26}
			Max~Aires, Proof of the Kahn--Saks conjecture, in preparation (2026).  		
			
			\bibitem[AK25a]{AK25a}
			Max~Aires and Jeff~Kahn,
			Balancing extensions in posets of large width,
			preprint (2025), 24~pp.; \ts 
			{\tt arXiv:2509.11549}.
			
			
			
			
			
			\bibitem[AK25b]{AK25b}
			Max~Aires and Jeff~Kahn,
			Variance vs.\ range for linear extensions, and balancing extensions in posets of bounded width,
			preprint (2025), 11~pp.; \ts {\tt arXiv:2510.26134v1}.
			
			
			\bibitem[AS16]{AS16}
			Noga~Alon and Joel~H.~Spencer, \emph{The probabilistic method} (Fourth ed.),
			John Wiley, Hoboken, NJ, 2016, 375~pp.
			
			\bibitem[BLL98]{BLL98}
			Fran\c{c}ois~Bergeron, Gilbert~Labelle and Pierre~Leroux,  
			\emph{Combinatorial species and tree-like structures}, 
			Cambridge Univ.\ Press, Cambridge, UK, 1998, 457~pp.
			
			
			\bibitem[Bri88]{Bri88}
			Graham~Brightwell,
			\emph{Linear extensions of infinite posets},
			\emph{Discrete Math.} \textbf{70} (1988), 113--136.
			
			\bibitem[Bri89]{Bri89}
			Graham~Brightwell,
			Semiorders and the {$\frac 13$}--{$\frac 23$} conjecture,
			\emph{Order}~\textbf{5} (1989), 369--380.
			
			\bibitem[Bri99]{Bri99}
			Graham~Brightwell,  Balanced pairs in partial orders,
			\emph{Discrete Math.}~\textbf{201} (1999), 25--52.
			
			\bibitem[BFT95]{BFT95}
			Graham~Brightwell, Stefan~Felsner and William~T.~Trotter,
			Balancing pairs and the cross product conjecture,
			\emph{Order}~\textbf{12} (1995), 327--349.
			
			
			\bibitem[BW92]{BW92}
			Graham~Brightwell and Colin~Wright,
			The {$1/3\text{--}2/3$}  conjecture for $5$-thin posets,
			\emph{SIAM J.\ Discrete Math.}~\textbf{5} (1992), 467--474.
			
			\bibitem[BZ88]{BZ-book}
			Yuri~D.~Burago and  Victor~A.~Zalgaller,
			\emph{Geometric inequalities},
			Springer, Berlin, 1988, 331~pp.
			
			\bibitem[CP25]{CP-LE}
			Swee~Hong~Chan and Igor~Pak, Linear extensions of finite posets,
			\emph{EMS Surveys in Mathematical Sciences},
			published online (2025), 56~pp.
			
			\bibitem[CPP21a]{CPP-sort}
			Swee~Hong~Chan, Igor~Pak and Greta~Panova,
			Sorting probability for large Young diagrams, \emph{Discrete Anal.}~\textbf{2021} (2021),
			Paper No.~24, 57~pp.
			
			
			\bibitem[CPP21b]{CPP-Cat}
			Swee~Hong~Chan, Igor~Pak and Greta~Panova, Sorting probability of Catalan posets,
			\emph{Advances Applied Math.}~\textbf{129} (2021), Paper No.~102221, 13~pp.
			
			\bibitem[CPP25]{CPP-quant}
			Swee~Hong~Chan, Igor~Pak and Greta~Panova,
			On the cross-product conjecture for the number of linear extensions,
			\emph{Canadian Jour.\ Math.} \textbf{77} (2025), 535--562.
			
			\bibitem[Chen18]{Chen18}
			Evan~Chen,  A family of partially ordered sets with small balance constant, 
			\emph{Electron.\ J.\ Combin.} \textbf{25} (2018), no.~4, Paper No.~4.43, 13~pp.
			
			\bibitem[EHS89]{EHS89}
			Paul~Edelman, Takayuki~Hibi and Richard P.~Stanley,
			A recurrence for linear extensions,
			\emph{Order}~\textbf{6} (1989), 15--18.
			
			
			\bibitem[FKG71]{FKG71}
			Cornelius~M.~Fortuin, Pieter~W.~Kasteleyn and Jean~Ginibre,
			Correlation inequalities on some partially ordered sets,
			\emph{Comm.\ Math.\ Phys.}~\textbf{22} (1971), 89--103.
			
			
			\bibitem[Fre75]{Fre75}
			Michael~L.~Fredman,
			How good is the information theory bound in sorting?,
			\emph{Theoret.\ Comput.\ Sci.}~\textbf{1} (1975), 355--361.
			
			
			\bibitem[Fri93]{Fri93}
			Joel~Friedman,
			A note on poset geometries,
			\emph{SIAM J.\ Comput.}~\textbf{22} (1993), 72--78.
			
			\bibitem[GHP87]{GHP87}
			Bernhard~Ganter, Gerhart~H\"{a}fner and Werner~Poguntke,
			On linear extensions of ordered sets with a symmetry,
			\emph{Discrete Math.}~\textbf{63} (1987), 153--156.
			
			\bibitem[Gru60]{Gru60}
			Branko~Gr\"unbaum,
			Partitions of mass-distributions and of convex bodies by hyperplanes,
			\emph{Pacific J.~Math.}~\textbf{10} (1960), 1257--1261.
			
			\bibitem[Gup26]{Gup26}
			Anish Gupta,
			Balance constants, majority cycles,
			and the Gold Partition Conjecture
			through fourteen elements, preprint (2026), 16~pp.; \ts 
			{\tt arXiv:2607.23926}.
			
			
			\bibitem[Haq26]{Haq26}
			Alireza~Haqi,
			On the gap of finite posets, preprint (2026), 25~pp.; \ts 
			{\tt arXiv:2608.12678}.
			
			\bibitem[KL91]{KL90}
			Jeff~Kahn and Nathan~Linial, Balancing extensions via Brunn--Minkowski,
			\emph{Combinatorica}~\textbf{11} (1991), 363--368.
			
			\bibitem[KS84]{KS84}
			Jeff~Kahn and Michael~Saks,
			Balancing poset extensions,
			\emph{Order}~\textbf{1} (1984), 113--126.
			
			
			
			\bibitem[Kis68]{Kis68}
			Sergey~S.~Kislitsyn, A finite partially ordered set and its
			corresponding set of permutations,
			\emph{Math.\ Notes}~\textbf{4} (1968), 798--801.
			
			\bibitem[Kom90]{Kom90}
			J\'{a}nos~Koml\'{o}s, A strange pigeonhole principle,
			\emph{Order}~\textbf{7} (1990), 107--113.
			
			
			\bibitem[LV07]{LV07}
			L\'aszl\'o~Lov\'asz and Santosh~Vempala,
			The geometry of logconcave functions and sampling algorithms,
			\emph{Random Structures Algorithms}~\textbf{30} (2007), 307--358.
			
			
			
			\bibitem[Lin84]{Lin84}
			Nathan~Linial, The information-theoretic bound is good for merging,
			\emph{SIAM J.\ Comput.}~\textbf{13} (1984), 795--801.
			
			\bibitem[Mat02]{Mat02}
			Ji\v{r}\'{\i}~Matou\v{s}ek, \emph{Lectures on discrete geometry},
			Springer, New York, 2002, 481~pp.
			
			
			\bibitem[MPP18]{MPP-phi}
			Alejandro~H.~Morales, Igor~Pak and Greta~Panova,
			Why is \ts $\pi < 2\ts\phi$? \ts
			\emph{Amer.\ Math.\ Monthly}~\textbf{125} (2018), 715--723.
			
			\bibitem[OS18]{OS18}
			Emily~J.~Olson and Bruce~E.~Sagan,
			On the \ts {$\frac 13$}--{$\frac 23$} \ts  conjecture,
			\emph{Order}~\textbf{35} (2018), 581--596.
			
			\bibitem[Pec08]{Pec08}
			Marcin~Peczarski,
			The gold partition conjecture for $6$-thin posets,
			\emph{Order}~\textbf{25} (2008), 91--103.
			
			
			\bibitem[Rota98]{Rota98}
			G.-C.~Rota, Foreword to \cite{BLL98}.  
			
			\bibitem[Sah21]{Sah21} Ashwin~Sah, 
			Improving the {$\frac{1}{3}$}-{$\frac{2}{3}$} conjecture for
			width two posets, \emph{Combinatorica} \textbf{41} (2021), 99--126.
			
			
			\bibitem[She82]{She82}
			Lawrence~A.~Shepp,
			The XYZ conjecture and the FKG inequality,
			\emph{Ann.\ Probab.}~\textbf{10} (1982), 824--827.
			
			\bibitem[Sta89a]{Sta89a}
			Grzegorz~Stachowiak,
			A relation between the comparability graph and the number of linear extensions,
			\emph{Order}~\textbf{6} (1989), 241--244.
			
			
			\bibitem[Sta81]{Sta-AF}
			Richard~P.~Stanley,
			Two combinatorial applications of the Aleksandrov--Fenchel inequalities,
			\emph{J.\ Combin.\ Theory, Ser.~A} \textbf{31}  (1981), 56--65.
			
			\bibitem[Sta86]{Sta-two}
			Richard~P.~Stanley, Two poset polytopes,
			\emph{Discrete Comput.\ Geom.}~\textbf{1} (1986), no.~1, 9--23.
			
			\bibitem[Sta89b]{Sta-log-concave}
			Richard~P.~Stanley,
			Log-concave and unimodal sequences in algebra, combinatorics, and geometry,
			in  \emph{Graph theory and its applications}, New York Acad.\ Sci.,
			New York, 1989, 500--535.
			
			\bibitem[Tro95]{Tro95}
			William~T.~Trotter, Partially ordered sets, in
			\emph{Handbook of combinatorics}, vol.~1, Elsevier,
			Amsterdam, 1995, 433--480.
			
			\bibitem[TGF92]{TGF92}
			William~T.~Trotter, William~G.~Gehrlein and Peter~C.~Fishburn,
			Balance theorems for height$-2$ posets,
			\emph{Order}~\textbf{9} (1992), 43--53.
			
			\bibitem[Zag12]{Zag12}
			Imed~Zaguia,
			The \ts {$\frac 13$}--{$\frac 23$} \ts conjecture for $N$-free ordered sets,
			\emph{Electron.\ J.\ Combin.}~\textbf{19} (2012), no.~2, Paper~29, 5~pp.
			
			
			\bibitem[Zag19]{Zag19}
			Imed~Zaguia,
			The \ts {$\frac 13$}--{$\frac 23$} \ts conjecture for ordered sets whose cover
			graph is a forest, \emph{Order}~\textbf{36} (2019), 335--347.
			
			
			
			
			
		\end{thebibliography}
	}
\end{document}